\documentclass[10pt]{article}

\usepackage{amssymb}
\usepackage{amsfonts}
\usepackage{graphicx}
\usepackage{amsmath}

\usepackage{appendix}

\usepackage[colorlinks, linkcolor=black, anchorcolor=black,citecolor=black]{hyperref}

\usepackage{mathrsfs}
\usepackage{frcursive}

\usepackage{geometry}
\usepackage{color}

\numberwithin{equation}{section}

\DeclareMathOperator*{\esssup}{ess\,sup}
\DeclareMathOperator*{\essinf}{ess\,inf}

\renewcommand{\d}{\mathrm{d}}

\newcommand{\R}{\mathbb{R}}

\newcommand{\N}{\mathbb{N}}

\newtheorem{theorem}{Theorem}[section]

\newtheorem{assumption}[theorem]{Assumption}

\newtheorem{definition}[theorem]{Definition}
\newtheorem{example}[theorem]{Example}

\newtheorem{lemma}[theorem]{Lemma}

\newtheorem{proposition}[theorem]{Proposition}
\newtheorem{remark}[theorem]{Remark}

\newenvironment{proof}[1][Proof]{\noindent\textit{#1.} }{\hfill \rule{0.5em}{0.5em}}

\renewcommand{\eqref}[1]{(\ref{#1})}

\begin{document}

\title{\textsc{Spreading speeds for nonlocal Fisher--KPP equations
with time-dependent asymmetric kernels}}

\author{
Zhucheng Jin$^{a}$ and Tao Zhou$^{b,}$\thanks{ Corresponding author \\   
 E-mail addresses:   jinzc@ustc.edu.cn (Z. Jin);
 tzhou910@ustc.edu.cn (T. Zhou)}\\[0.4em]
{\small $^{a}$School of Mathematical Sciences, University of Science and Technology of China, Hefei, Anhui 230026, P.R. China}\\
{\small $^{b}$Center for Pure Mathematics, School of Mathematical Sciences, Anhui University, Hefei, Anhui 230601, P.R. China}\\[0.4em]
}

\maketitle

\begin{abstract}
This paper investigates  spreading speeds for nonautonomous Fisher--KPP equations with nonlocal diffusion. The dispersal kernel depends on time and may be asymmetric in space. We assume exponential  boundedness  of the kernel and the existence of uniform mean values. Under these assumptions, the rightward and leftward spreading speeds are proved to be  given by the linearly determined formula.  The proof  is based on comparison arguments, a  refined maximum principle and proper sub- and super-solutions.   The result includes, in particular, periodic, almost periodic, and uniquely ergodic time dependence.  And it allows   kernels  whose support does not contain the neighborhood  of the  origin.

\vspace{0.2in}
\noindent \textbf{Key words}.  spreading speed; time-dependent kernel; asymmetric kernel; uniform mean value.

\vspace{0.1in}
\noindent \textbf{2020 Mathematical Subject Classification}. 35K57; 35B40; 35B65; 92D25.

\end{abstract}

\section{Introduction}
In this work, we  study the Cauchy problem
\begin{equation} \label{Pb}
\begin{cases}
\displaystyle \partial_t u(t,x) = \int_{\R} K(t, x-y) [u(t,y) -u(t,x) ] \d y  + u(t,x) f(t,u(t,x)), &  t>0, x\in\R, \\
\displaystyle  u(0,x) =u_0(x), & x\in \R.
\end{cases}
\end{equation}
Here $u(t,x)$ is the density of a  population. The initial function $0\leq u_0  \leq 1$ is  continuous, compactly supported, and not identically zero. The reaction term is of Fisher--KPP type. The kernel $K(t,x-y)$  describes the  probability of the species jump  from location $y$ to location $x$ at time $t$.  Here, we do not require   $K$  to be symmetric in spatial variable.  This equation can arise
in various contexts, such as phase transitions and combustion.   In particular, it plays an important role
in ecological and biological modeling. For instance, \eqref{Pb}   can describe the spatio-temporal evolution of an  invading population into some empty environments.

Propagation phenomena in local diffusion equations have been studied extensively in the literature.
For the classical local Fisher--KPP equation
\begin{equation}\label{kpp}
\partial_t u(t,x)=\partial_{xx}u(t,x)+u(t,x)(1-u(t,x)), \qquad t>0,\ x\in\R,
\end{equation}
the propagation dynamics are well understood. The works of Fisher \cite{fisher1937wave}, Kolmogorov--Petrovskii--Piskunov \cite{kolmogorov1937study}, and Aronson--Weinberger \cite{aronson1978multidimensional} show that the solution with compactly supported  data invades the unstable state $u=0$ with a  definite speed. 
Moreover, this speed coincides with  the minimal speed of traveling fronts. 
Later works developed this relation in a more abstract form, see, for example, \cite{weinberger1982long,weinberger2002spreading,liang2007asymptotic}.

Nonlocal diffusion is used when dispersal is not well described by Brownian motion. One replaces $\partial_{xx}$ by
\begin{equation}\label{nonlocal}
\phi \mapsto \int_{\R}K(x-y)[\phi(y)-\phi(x)]\d y .
\end{equation}
This describes  the long distance disperse. It is suitable for situations where seeds, pollen, individuals, or pathogens may move a long distance in a short time. From the analytical perspective, this replacement is not harmless. The nonlocal diffusion equation does not usually have the smoothing effect as  parabolic equations, and the semiflow is not compact in the usual spaces. For this reason, proofs for local diffusion equations cannot be copied directly. Traveling waves and spreading speeds for nonlocal Fisher--KPP equations have been studied in \cite{diekmann1979run,schumacher1980travelling,carr2004uniqueness,coville2005propagation,lutscher2005effect,xu2021spatial, du2025asymptotic}.   In particular, asymmetric kernels and the difference between the two directions were discussed in \cite{xu2021spatial}.  Very recently, Du \textit{et al.} \cite{du2025asymptotic}   provided  a new approach  to study  this problem, which  based on the asymptotic limit of the principal eigenvalue.  This methodology may applicable to our  problem if one has the property of principal eigenvalue of the nonautonomous operator.

In applications, dispersal and growth may change with season, resource level, temperature, or other environmental factors. 
To make the model more realistic, the heterogeneous media are taken into consideration.
For periodic media, there are many results for both local and nonlocal equations; see \cite{alikakos1999periodic,berestycki2005speed,liang2006spreading,jin2009spatial,jin2012seasonal,weinberger2002spreading}. For more general time dependence, such as almost periodic or uniquely ergodic coefficients, the usual periodic arguments are not enough. Generalized transition waves were introduced in \cite{matano2003traveling,shen2004traveling,berestycki2012generalized}. We also refer to \cite{berestycki2008asymptotic,berestycki2019asymptotic,nadin2012propagation,shen2010variational,shen2011existence} for related propagation results in time/space  heterogeneous media. For nonlocal equations, results on transition waves and spreading speed can be found in \cite{lim2016transition,shen2016transition,shen2017transition,shen2010spreading,liang2020jfa,dj2023spreading,ducrot2021generalized}. 

\vspace{1em}

Now we give a rough introduction of our main results. 
Due to the KPP reaction term, the spreading speed may be linearly determined.  
Since  the kernel itself depends on time,  the linear selected speed should not be determined by   the fixed exponential moment. Ansatz $e^{-\lambda (x- \int_{0}^{t} c(s) \d s)}$  into the  linearized equation.   
The following quantity appears,
\begin{equation*}
\int_{\R}K(t,y)(e^{\lambda y}-1)\d y+r(t), \qquad r(t):=f(t,0).
\end{equation*}
We assume that this quantity has a uniform mean value in time. This assumption is weaker than periodicity and still gives enough information for studying the spreading  behaviors of solutions. It includes the usual periodic case and also covers almost periodic and uniquely ergodic examples.
Because $K(t,y)$ may be nonsymmetric, the invasion speed  to the right and to the left may be different. We therefore use two speeds, denoted by $c^*_+$ and $c^*_-$. Roughly speaking, our result says that these speeds are given by
\begin{equation*}
 c^*_{+}=\inf_{\lambda>0}
\frac{
\left\langle \int_{\R}K(\cdot,y)(e^{\lambda y}-1)\d y+r(\cdot)\right\rangle}{\lambda},
\qquad
 c^*_{-}=\inf_{\lambda<0}
\frac{
\left\langle \int_{\R}K(\cdot,y)(e^{\lambda y}-1)\d y+r(\cdot)\right\rangle}{-\lambda},
\end{equation*}
where the precise admissible range of $\lambda$ is given below. This is the same linear determinacy principle as in the standard KPP case, but the exponential moment is replaced by its time average.

We state more explicitly what is allowed here. The function $K(t,y)$ is only   measurable  in $t$. It is not assumed to be symmetric in $y$.   
We also do not assume that the support of $K(t,\cdot)$ contains a neighborhood of the origin. This last point is important in the proof, since the usual strong maximum principle is no longer available. To overcome this difficulty,   we  prove  a refined  maximum principle  adapted to such kernels. 
The upper estimate follows from exponential super-solutions and the uniform mean value formula. 
The lower estimate is based on localized exponential sub-solutions whose centers move with suitable average  speeds. 
Some ideas are close to those in \cite{dj2023spreading,liang2020jfa,xu2021spatial}, but the time-dependent and nonsymmetric kernel requires several modifications.

We first define the rightward and leftward spreading speeds. 
\begin{definition}\label{def:ss}Two constants $c^*_+$ and $c^*_-$ are called the spreading speeds of \eqref{Pb} in the positive and negative directions respectively provided the following two statements hold:\\
    (i)\ For any nonnegative initial function $u_0 \in L^\infty(\mathbb R)\cap C(\mathbb R)$ with a compact support,
    $$\displaystyle{\lim_{t\rightarrow+\infty}}\sup \limits_{x\geq (c^*_++\epsilon) t}u(t,x)=\displaystyle{\lim_{t\rightarrow+\infty}}\sup \limits_{x\leq (-c^*_--\epsilon )t}u(t,x)=0, ~~\text{ for all small }  \epsilon>0;$$ 
    (ii)\   There is some $L>0$ such that for any nonnegative initial function $u_0 \in L^\infty(\mathbb R)\cap C(\mathbb R)$, if $u_0(x)>0$ on an interval longer than $L$, then \begin{equation}\label{eq:limitinf}
        \displaystyle{\lim_{t\rightarrow+\infty}}\sup \limits_{(-c^*_-+\epsilon)t \leq x\leq(c^*_+-\epsilon )t}|u(t,x)-1|=0,   ~~\text{ for all small }   \epsilon>0.
    \end{equation}
    
\end{definition}
\begin{remark}
The above definition implies that $c^*_{+}+c^*_{-}\geq0$. When equality holds, the interval in  \eqref{eq:limitinf} is empty for every sufficiently small $\epsilon>0$;   in this degenerate case we say that the  spreading speed is zero. 
\end{remark}

Before   stating  some assumptions,  we first  introduce the concept of \textit{uniform mean value} which plays an important role in overcoming the difficulties induced  by time heterogeneities, see \cite{nadin2012propagation, shen2010variational, shen2011existence}.   We  emphasize that most time heterogeneities admit a mean value, for instance periodic, almost periodic and  uniquely ergodic media. Now  we give the definition of uniform mean value.
  \begin{definition}\label{DEF-mean}
 A function $h\in L^\infty(0, \infty)$ is said to have a uniform mean value if the following limit exists, 
 \begin{equation*}
     \langle h \rangle := \lim\limits_{T\to +\infty} \frac{1}{T} \int_{0}^{T} h(t+s) \d t, \text{ uniformly for } s\geq 0.
 \end{equation*} 
  Herein,  the quantity $\langle h \rangle$ is called the uniform mean value of $h$. 
  \end{definition}
There are some equivalent and useful characterizations for the  function $h$ possessing  a uniform mean value.  As proved in \cite{nadin2012propagation},  the uniform  mean value $\langle h \rangle$ has the following alternative representations:
 \begin{equation}\label{reform}
     \langle h\rangle = \sup_{a\in W^{1, \infty}(0,\infty) }\essinf_{t\in(0, \infty)} \left( a' + h\right) (t)= \inf_{a\in W^{1, \infty}(0, \infty) }\esssup_{t \in (0, \infty)} \left( a' + h\right) (t).
 \end{equation}

\begin{assumption} [KPP conditions] \label{ASS-f}
The function $f:[0,+\infty)\times\R\to\mathbb R$ satisfies:
\begin{itemize}
\item[(f1)] The function  $f(t,\cdot)$ is nonincreasing and Lipschitz continuous uniformly with respect to  $t\geq 0$, and  $f(\cdot,u) \in L^\infty (0,\infty)$ for each given $u\in \R$;

\item[(f2)]  $f(t,1)\equiv 0$ for all $t\geq 0$ and 
$h(u):=  \inf_{t\geq 0} f(t,u) >0, \forall u \in[0, 1).$ Moreover, the function $r(t):= f(t,0)$ has a uniform mean value, denoted by $\langle r\rangle$.
\end{itemize}

\end{assumption}
It is easy to see that $\inf_{t\geq 0} r(t)=h(0)>0$. A typical example of Assumption \ref{ASS-f} is given by $f(t,u)=r(t) (1-u)$.

Next we state assumptions of kernel function $K$. 
 \begin{assumption} \label{ASS-K_z}
The kernel function $K(t,y): [0, +\infty) \times \R  \to [0, +\infty) $  satisfies: 
\begin{itemize}
    \item[(K1)]   The function $K$ is Lebesgue measurable and  $K(\cdot, y) \in L^\infty (0, \infty)$ for a.e. $y \in \R$;
    \item[(K2)] The mean value  $\langle K(\cdot, y) \rangle $ exists for a.e. $y \in \R$;
     \item[(K3)]   Denote 
    \begin{equation}\label{def:I}
    \mathcal{I} := \left\{  \lambda \in \R:   \;\;    \int_{\R} \|  K(\cdot, y) \|_{L^\infty (0, \infty)}  e^{\lambda y}\d y < +\infty   \right\}. 
    \end{equation}
    There exists $\eta>0$ such that  $(-\eta,\eta)\subset\mathcal I$.  Moreover, for    each $\lambda \notin \mathcal{I}$, we  impose
    \begin{equation}\label{blue}
    \liminf_{T\to +\infty} \frac{1}{T}\int_{0}^{T} \int_{\R}K(t,y)e^{\lambda y} \d y \d t =+\infty.
    \end{equation}
%
   \end{itemize}

 \end{assumption}
 \begin{remark}
Note that  the kernel is integrable uniformly in time since  $0 \in \mathcal{I}$. The assumption  \textit{(K3)}  implies that the kernel is exponentially bounded uniformly in time.   It should be noted that,  under  certain situations,  the endpoints of the interval $\mathcal{I}$, $\sup \mathcal{I}$ and $\inf \mathcal{I}$,   may be  elements of  $\mathcal{I}$.  In the sequel, we  denote 
\begin{equation}\label{i-pm}
i_+:= \sup \mathcal{I} \text{ and  }  i_{-}:= \inf\mathcal{I}.
\end{equation}

Condition \eqref{blue} is imposed only at points outside the set $\mathcal{I}$. It excludes the possibility that the averaged exponential moment remains finite at a boundary point not belonging to $\mathcal I$, and it is used to handle the endpoint case in the variational formula of  the  spreading speed.
 \end{remark}

Define a map $H(\lambda) : (i_-, i_+) \to \R$ as
\begin{equation}\label{H}
H(\lambda):= \left \langle  \int_{\R} K(\cdot,y) [e^{\lambda y} -1] \d y + r(\cdot) \right \rangle, 
\end{equation}
and two constants 
\begin{equation}\label{c*}
c^*_{+} := \inf_{\lambda\in(0, i_+) } \frac{H(\lambda) }{\lambda} \;\;  \text{ and } \;\; c^*_{-}:= \inf_{\lambda\in(i_{-}, 0) } \frac{ H(\lambda)}{-\lambda}. 
\end{equation}
\begin{remark}
Recalling Definition \ref{DEF-mean} and Assumption \ref{ASS-K_z},   the Fubini-Tonelli theorem and the  Lebesgue dominated convergence theorem imply that for each $\lambda \in  (i_-, i_+)$,
\begin{equation}\label{Hchange}
H(\lambda) = \left\langle \int_{\R} K(\cdot,y) [e^{\lambda y} -1] \d y \right \rangle+\left\langle r(\cdot) \right \rangle = \int_{\R}  \left\langle  K(\cdot,y)  \right \rangle  [e^{\lambda y} -1] \d y+\left\langle r(\cdot) \right \rangle.
\end{equation}
Note that 
$$\lim_{\lambda \to 0^{\pm} } \frac{H(\lambda)}{\pm \lambda}= +\infty.$$
Thus, $c^*_{\pm} $ are well defined.   The properties of $H(\lambda)$ and $c^*_\pm$ will be shown in Section \ref{sect-c}.
\end{remark}

%

Now we state the spreading speed  result of \eqref{Pb}.
 \begin{theorem}\label{Thm1_z}
     Let Assumptions \ref{ASS-f} and \ref{ASS-K_z}  be satisfied. Then the spreading speeds  of \eqref{Pb} are given by   $c^*_+$  and $ c^*_-$, which are defined in \eqref{c*}. Moreover, $\text{sgn}(c^*_-+c^*_+)=\text{sgn} \left \langle\int_{\R} K(\cdot, y)\d y \right \rangle$.
 \end{theorem}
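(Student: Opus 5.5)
The proof splits into three parts: the upper estimate (i), the lower estimate (ii), and the sign identity. Throughout we use the reformulation \eqref{reform} of uniform mean values, which converts averaged quantities into pointwise inequalities up to a bounded corrector $a\in W^{1,\infty}$.

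\textbf{Upper estimate.} Fix $\lambda\in(0,i_+)$. Set
\[
g_\lambda(t):=\int_\R K(t,y)(e^{\lambda y}-1)\,\d y+r(t).
\]
By \eqref{reform}, for every $\delta>0$ there is $a\in W^{1,\infty}$ with $a'+g_\lambda\le H(\lambda)+\delta$ a.e. We then take
\[
\bar u(t,x)=M\exp\Bigl(-\lambda x+\int_0^t g_\lambda(s)\,\d s\Bigr),
\]
with $M$ large so that $\bar u(0,\cdot)\ge u_0$. Since $f(t,u)\le r(t)$, this $\bar u$ is a supersolution of \eqref{Pb}. Indeed, the nonlocal term applied to $e^{-\lambda x}$ equals $\bigl(\int K(t,y)(e^{\lambda y}-1)\,\d y\bigr)e^{-\lambda x}$ after the change of variables $y\mapsto x-y$. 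The comparison principle for \eqref{Pb} holds because the operator is a bounded perturbation, so one can use a Gronwall/ODE-in-$L^\infty$ argument with $K(t,\cdot)\in L^1$ uniformly in $t$. It gives $u\le\bar u$. Since $\int_0^t g_\lambda\le (H(\lambda)+\delta)t+2\|a\|_\infty$, we obtain
\[
u(t,x)\le C e^{-\lambda\bigl(x-\frac{H(\lambda)+\delta}{\lambda}t\bigr)}.
\]
Optimizing over $\lambda$ and $\delta$ yields the rightward part of (i). The leftward part follows in the same way with $\lambda<0$.

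\textbf{Lower estimate.} This is the main obstacle. Fix $c<c^*_+$. By the properties of $H$ (Section \ref{sect-c}: convexity, and the endpoint behaviour ensured by \eqref{blue}), there is a $\lambda$ with $H(\lambda)>c\lambda$. I would rather work in a moving frame $x-ct$ and use a complex/oscillatory exponent, or a truncated exponential: take $\lambda$ slightly perturbed so that the linearized operator restricted to a large interval $[-L,L]$ in the moving frame has positive averaged growth. Concretely, I would build a compactly supported subsolution
\[
\underline u(t,x)=\epsilon\,e^{\,b(t)}\phi_L\bigl(x-ct-\alpha(t)\bigr),
\]
where $\phi_L$ is the principal eigenfunction of the averaged truncated nonlocal operator with kernel $\langle K\rangle$ and weight $e^{\lambda y}$ on $[-L,L]$. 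The bounded corrections $\alpha$ and $b$ come from \eqref{reform} and absorb the time fluctuations of $K$ and $r$. The key sub-lemma is that the truncated principal eigenvalue converges, as $L\to\infty$, to $\min_\lambda\bigl(H(\lambda)-c\lambda\bigr)$ at the optimal $\lambda$, which is positive when $c<c^*_+$. Since the support of $K$ may avoid the origin, positivity of $\underline u$ cannot come from a strong maximum principle. Instead I would prove the refined maximum principle: a solution positive on an interval of length $>L$ (with $L$ tied to where $\langle K\rangle>0$) stays positive on intervals that grow linearly. Together with KPP saturation, this gives $\liminf u\ge$ a positive constant on $|x-ct|\le R$ uniformly. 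Upgrading to $u\to1$ uses $h(u)>0$ on $[0,1)$: a spatially homogeneous ODE subsolution comparison starting from the positive lower bound on expanding intervals, combined with $u\le 1$ when $u_0\le1$. Sweeping $c\in(-c^*_-+\epsilon,c^*_+-\epsilon)$ then gives \eqref{eq:limitinf}.

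\textbf{Sign identity.} From \eqref{Hchange}, $H$ is convex on $(i_-,i_+)$ with $H(0)=\langle r\rangle>0$ and $H'(0)=\int y\langle K\rangle\,\d y$. Consider $c^*_++c^*_-$. For $\lambda>0$ and $\mu<0$, convexity gives the inequality relating $H(\lambda)/\lambda+H(\mu)/(-\mu)$ to the slopes of $H$ on either side of $0$. This shows that $c^*_++c^*_-$ has the sign of the smallest possible value of $\min H$ relative to zero, which after expressing through $\langle r\rangle$ and the mass $\langle\int K\rangle$ reduces to comparing $\inf_\lambda\bigl(H(\lambda)-\langle r\rangle+\langle\int K\rangle\bigr)=\inf_\lambda\int\langle K\rangle e^{\lambda y}\,\d y$. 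I would check this carefully, since the sign statement reads oddly: for a nonnegative nonzero kernel, $\langle\int K\rangle\ge0$.
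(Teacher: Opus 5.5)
Your upper estimate is the paper's argument. One small point: $\bar u$ is unbounded as $x\to-\infty$, so compare $u$ with $\min\{\bar u,1\}$ or argue in a weighted space, since Lemma~\ref{lem:MP} is for bounded functions.

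\textbf{The lower-bound subsolution does not work as stated.} For $\underline u=\epsilon e^{b(t)}\phi_L(\xi)$ with $\xi=x-ct-\alpha(t)$, you need, pointwise in $\xi$ on the support (with a small margin $\theta>0$ for the nonlinearity),
\[
b'(t)\phi_L(\xi)-\bigl(c+\alpha'(t)\bigr)\phi_L'(\xi)\le\int_\R K(t,z)\phi_L(\xi-z)\,\d z-\bigl(\overline K_0(t)-r(t)+\theta\bigr)\phi_L(\xi).
\]
\begin{itemize}
\item $\phi_L$ is an eigenfunction only of the \emph{averaged} operator. So the fluctuation $\int_\R\bigl(K(t,z)-\langle K(\cdot,z)\rangle\bigr)\phi_L(\xi-z)\,\d z$ genuinely depends on $\xi$ and does not lie in the span of $\phi_L$ and $\phi_L'$.
\item Formula \eqref{reform} only yields correctors for scalar functions of $t$ multiplying the profile, as $r(t)\phi_L$ does. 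To use $b'$ you must take the infimum over $\xi$ of the ratio at each time before averaging in $t$, and $\langle\inf_\xi\cdot\rangle\le\inf_\xi\langle\cdot\rangle$, typically strictly.
\item The worst points are the edges of the support, where the instantaneous kernel can differ completely from the averaged one. If $K(t,\cdot)$ alternates between $\mathbf{1}_{[2,3]}$ and $\mathbf{1}_{[-3,-2]}$, each edge receives no mass half of the time.
\item Nothing makes this loss small, while your margin $\min_\lambda(H(\lambda)-c\lambda)$ tends to $0$ as $c\to c^*_+$.
\end{itemize}
Fixing this would require a principal eigenfunction of the nonautonomous operator, which is exactly what the paper avoids.

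\textbf{The paper's way out.} It uses the ``complex exponent'' you mention but do not pursue: $\phi=e^{a(t)}e^{-\lambda x}\cos\frac{\pi x}{2l}$ on $[-l,l]$. Since $e^{(-\lambda+i\pi/2l)x}$ is an eigenfunction of every convolution, truncating to $K_B$ and using $\cos\frac{\pi y}{2l}<0$ for $l<|y|<2l$ reduces the fluctuation to two scalar functions of $t$:
\begin{itemize}
\item The $\sin$-coefficient is cancelled exactly by the time-dependent drift $c_{l,B}(\lambda)(t)$. Its integral is only $\langle c_{l,B}(\lambda)\rangle t+o(t)$, so your bounded $\alpha$ is not available either.
\item The $\cos$-coefficient is handled by $a$, obtained from \eqref{reform} applied to $G_\lambda$, whose mean is $\lambda^2c'(\lambda)<0$ (Lemma~\ref{Lem-subsolution}).
\end{itemize}
The price is that this subsolution moves at mean speed $\approx H'(\lambda)$, not at an arbitrary $c$. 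This changes the rest of your lower estimate:
\begin{itemize}
\item Only speeds near $c^*_+$ are reached, by letting $\lambda\uparrow\lambda^*_+$ and using a continuous family in $\lambda$ to cover $[c_1t,c_2t]$.
\item The endpoint case $\lambda^*_+=i_+<\infty$ needs the truncation $K_n=K\chi_{(-\infty,n]}$ together with $c^*_{n,+}\to c^*_+$.
\item The middle region needs the minimum argument of Lemma~\ref{Lem-positive} rather than a sweep in $c$.
\item The upgrade to $1$ needs the localization estimate of Lemma~\ref{Lem5.3}, which uses exponential tails on a fixed time window. A direct ODE comparison does not apply, because $u$ is bounded below only on an expanding interval.
\end{itemize}

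\textbf{The sign identity is not established.} Your reduction to $\min H$ is wrong. If $\langle K\rangle$ is supported in $[\frac12,\frac32]$ with mass $m>\langle r\rangle$, then $\inf H=\langle r\rangle-m<0$ and even $c^*_-<0$, yet $c^*_++c^*_->0$. What works is convexity of $H$ between $\mu<0<\lambda$ together with $H(0)=\langle r\rangle>0$:
\[
\frac{H(\lambda)}{\lambda}+\frac{H(\mu)}{-\mu}\ge\langle r\rangle\Bigl(\frac1\lambda+\frac1{-\mu}\Bigr).
\]
This gives $c^*_++c^*_-\ge0$. It gives strict positivity as soon as one infimum is approached at a finite $\lambda$. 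That holds when $\int_\R\langle K(\cdot,y)\rangle\,\d y>0$, because $\langle K\rangle$ then charges a half-line. If that integral vanishes, $H\equiv\langle r\rangle$ and $c^*_\pm=0$. Your suspicion is right: the sign is never negative. The content of the statement is $c^*_++c^*_-\ge0$, with equality iff $\langle\int_\R K(\cdot,y)\,\d y\rangle=0$.
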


%
%
%
%
The rest of this paper is organized as follows. In Section~2 we establish comparison principles, well-posedness, and elementary properties of the speed functions. Section~3 proves the upper and lower spreading estimates and hence Theorem~\ref{Thm1_z}. Section~4 records several examples and special cases illustrating the role of asymmetric kernels.

 \section{Preliminaries}
This section collects several facts used later: well-posedness of \eqref{Pb},  comparison principles, some properties of $H(\lambda)$, and the relation between the signs of $c^*_\pm$ and positivity of solutions.

 \subsection{The  well-posedness of \eqref{Pb} and  the maximum principle}
Throughout this subsection we only assume $\overline{K}_0(\cdot):=\int_{\R}K(\cdot,y)\d y\in L^\infty_{loc}([0,+\infty))$. We prove well-posedness by an iteration argument. This avoids any continuity assumption on $K(t,y)$ in the time variable.

 \begin{proposition} \label{Prop:exsitence}
     There exists a unique solution to \eqref{Pb} satisfying $u(t,x)\in[0,1]$ on $(0,+\infty)\times\R$. Moreover, $u(t,\cdot)$ is continuous for all $t>0$.
 \end{proposition}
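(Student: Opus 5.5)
The plan is to recast \eqref{Pb} as a fixed-point problem on the bounded continuous functions. The time-dependent kernel is only measurable in $t$, so the argument must use the time variable solely through integrals. Set $\overline{K}_0(t)=\int_{\R}K(t,y)\,\d y$. Choose a constant $M$ larger than the Lipschitz constant of $f(t,\cdot)$ on $[0,1]$, plus $\sup_{t\geq0}|f(t,0)|$ (finite by (f1)). Extend $f$ by freezing $u\mapsto f(t,\min\{\max\{u,0\},1\})$, so that the nonlinearity $g(t,u):=u f(t,u)$ is globally Lipschitz, uniformly in $t$.

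On a short interval $[0,\tau]$ we work in $X_\tau=L^\infty(0,\tau;BUC(\R))$; continuity in $x$ is preserved by the convolution operator and by $g$. In $X_\tau$ we write the mild formulation
\begin{equation*}
u(t,x)=e^{-\int_0^t\overline{K}_0}u_0(x)+\int_0^t e^{-\int_s^t \overline{K}_0}\Big[\int_{\R}K(s,x-y)u(s,y)\,\d y+g(s,u(s,x))\Big]\d s .
\end{equation*}
Since $\overline{K}_0\in L^\infty_{loc}$, the right-hand side is a contraction on $X_\tau$ for $\tau$ small. The smallness depends only on $\|\overline{K}_0\|_{L^\infty(0,T)}$ and the Lipschitz constant, not on the data, so the argument iterates to every $[0,T]$ and gives a unique global mild solution. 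The solution is Lipschitz in $t$, satisfies the equation for a.e. $t$, and $u(t,\cdot)$ is continuous for each $t$. Equivalently, Picard iteration $u^{(n+1)}=\Phi(u^{(n)})$ converges, which matches the paper's remark about using iteration.

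The bounds $0\le u\le 1$ come from monotonicity of the iteration. Rewrite the equation as $\partial_t u+(\overline{K}_0+M)u = \int K u + Mu + g(t,u)$. The map $u\mapsto Mu+g(t,u)$ is nondecreasing on $[0,1]$ by the choice of $M$, and $K\ge0$. Hence the map
\begin{equation*}
\Psi(u)(t)=e^{-\int_0^t(\overline{K}_0+M)}u_0+\int_0^te^{-\int_s^t(\overline{K}_0+M)}\big[K(s,\cdot)*u(s)+Mu(s)+g(s,u(s))\big]\d s
\end{equation*}
is order-preserving on $\{0\le u\le1\}$. Because $g(t,0)=0$ and $g(t,1)=0$ by (f2), $\Psi(0)\geq0$ and $\Psi(1)\le 1$, using $\int K\cdot 1=\overline{K}_0$. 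So $\Psi$ maps $[0,1]$-valued functions to $[0,1]$-valued functions. Starting the iteration from $0$ gives a monotone sequence bounded by $1$; its limit is a fixed point of $\Psi$, which is equivalent to the mild formulation. Uniqueness in $X_T$ from the contraction step identifies this limit with the solution.

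The main obstacle is the lack of time regularity of $K$. I would handle it by never differentiating in $t$ and using only the absolutely continuous integrating factor $e^{-\int\overline{K}_0}$. The solution is then understood in the Carathéodory sense: absolutely continuous in $t$ for each $x$, with the equation holding a.e. Measurability of $s\mapsto K(s,\cdot)*u(s)$ in $BUC$ also needs checking. I would avoid this by working pointwise in $x$ with the sup norm and invoking Fubini–Tonelli, which (K1) permits.
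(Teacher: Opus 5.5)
Your argument is correct, but it takes a different route from the paper's.

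\textbf{The paper's route.} It uses the plain integral formulation $u=u_0+\int_0^t\big[\int_{\R}K(s,y)(u(s,x-y)-u(s,x))\,\d y+u f(s,u)\big]\,\d s$ and runs Picard iteration with contraction constant $C\tau$. It continues the solution through overlapping windows $[\tau/2,3\tau/2]$. Continuity in $x$ comes from a recursion on moduli of continuity, $\omega_{n+1}\le\omega_0+C\tau\,\omega_n$. The bound $0\le u\le 1$ is not proved inside the proposition at all. It is obtained afterwards by applying the maximum principle (Lemma~\ref{lem:MP}) to $u$ and to $1-u$.

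\textbf{What you do differently.} First, you truncate $f$ so that $g$ is globally Lipschitz. This makes the contraction constant, and hence the step size, genuinely independent of the data; the paper's estimate for $u f(s,u)$ tacitly assumes the iterates stay bounded. Second, you build continuity into the space $BUC$ instead of tracking moduli. Third, you prove invariance of $[0,1]$ directly through the order-preserving map $\Psi$ with the $Mu$ shift. So your existence proof is self-contained and also shows that the truncated and original problems coincide. The paper's route gets the bounds from a comparison principle it needs later anyway, so it costs nothing extra there.

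\textbf{One point to tighten.} The monotone limit of $\Psi^n(0)$ is a priori only a pointwise limit, so uniqueness in $X_T$ does not immediately apply to it. Either of two fixes works:
\begin{itemize}
\item Observe that the sup-norm contraction estimate never uses continuity in $x$, so it gives uniqueness among all bounded measurable fixed points.
\item More simply, note that $\Psi$ is itself a contraction on short intervals, or on $[0,T]$ in a weighted norm $\sup_t e^{-\gamma t}\|u(t)\|_\infty$. It maps the closed set $\{u\in X_\tau:\ 0\le u\le 1\}$ into itself. So its fixed point, which is the mild solution, lies in $[0,1]$, and no monotone limit is needed.
\end{itemize}
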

 \begin{proof}
     We use the integral formulation.
 Let $L_f$ be the Lipschitz constant of $f$ and $\tau\leq1$ be small to be chosen later. For any $t\in[0,\tau]$, set
     $$u_1(t,x)=u_0(x)+\int_0^t\int_{\R}K(s,y)(u_0(x-y)-u_0(x))dyds+\int_0^tu_0(x) f(s,u_0(x))ds$$
     and for any $n\in \mathbb Z_+,$
     \begin{equation}\label{eq:induction}
         u_{n+1}(t,x)=u_0(x)+\int_0^t\int_{\R}K(s,y)\big(u_n(s,x-y)-u_n(s,x)\big)dyds+\int_0^tu_n(s,x) f(s,u_n(s,x))ds.
     \end{equation}
    Recall that $u_0$ is continuous, compactly supported, and satisfies $0\leq u_0\leq1$. For any $(t,x)\in Q:=[0,\tau]\times\R,$
     $$|u_1(t,x)-u_0(x)|\leq2\int_0^{\tau}\int_{\R}K(s,y)dyds+\int_0^{\tau}u_0(x) f(s,u_0(x))ds\leq C\tau,$$
     where $C=\big(2\|\overline K_0(\cdot)\|_{L^\infty([0,1])}+L_f\big)$ and 
     \begin{equation*}
         \begin{split}
             &\quad |u_{n+1}(t,x)-u_n(t,x)|\\
             &=\Big|\int_0^t\int_{\R}K(s,y)\big((u_n-u_{n-1})(s,x-y)-(u_n-u_{n-1})(s,x)\big)dyds+\int_0^tu_n f(s,u_n)-u_{n-1} f(s,u_{n-1})ds\Big|\\
             &\leq\Big(2\int_0^{\tau}\int_{\R}K(s,y)dyds+L_f\tau\Big)\sup\limits_{(t,x)\in Q}|u_n(t,x)-u_{n-1}(t,x)|,
         \end{split}
     \end{equation*}
     This gives
     \begin{equation}
         \sup\limits_{(t,x)\in Q}|u_{n+1}(t,x)-u_n(t,x)|\leq C\tau\sup\limits_{(t,x)\in Q}|u_n(t,x)-u_{n-1}(t,x)|\leq(C\tau)^{n+1}
     \end{equation}
     Taking $\tau<\min\{1/C,1\}$, we deduce that the sequence $u_n(t,x)=u_0(x)+\sum_{i=1}^n\big(u_i(t,x)-u_{i-1}(t,x)\big)$ converges uniformly on $Q$ to some limit function $u(t,x)$. Letting $n\to+\infty$ in \eqref{eq:induction}, we have
     $$u(t,x)=u_0(x)+\int_0^t\int_{\R}K(s,y)\big(u(s,x-y)-u(s,x)\big)dyds+\int_0^tu(s,x) f(s,u(s,x))ds.$$ 
     Hence $u(t,x)$ satisfies \eqref{Pb} on $Q$.  Similarly, we can find the solution $\tilde u$ of \eqref{Pb} on $[\tau/2,3\tau/2]\times\R$ with initial value $\tilde u(\tau/2,x)=u(\tau/2,x)$. We claim that $\tilde u(t,x)=u(t,x)$ on $Q_1=[\tau/2,\tau]\times\R$. From the integral equation we obtain, for any $(t,x)\in Q_1$,
     \begin{equation*}
         \begin{split}
             |u(t,x)-v(t,x)|
             &=\Big|\int_{\tau/2}^t\int_{\R}K(s,y)\big((u-v)(s,x-y)-(u-v)(s,x)\big)dyds+\int_{\tau/2}^tuf(s,u)-vf(s,v)ds\Big|\\
             &\leq C\tau/2\sup\limits_{(t,x)\in Q_1}|u(t,x)-v(t,x)|.
         \end{split}
     \end{equation*}
    Hence $\tilde u(t,x)=u(t,x)$ on $Q_1=[\tau/2,\tau]\times\R$. Now we obtain a unique bounded solution to \eqref{Pb} on $[0,3\tau/2]\times\R$. An induction argument yields a unique bounded solution to \eqref{Pb} on $[0,1]\times\R$ and thus on $[0,+\infty)\times\R$.

    It remains to prove that the solution $u(t,x)$ is continuous. Let $\omega_0(\cdot)$ be the modulus of continuity of $u_0$. For any $x, \xi\in\R$ and $t\in[0,\tau]$, we have
    \begin{equation*}
   \begin{split}
       &\quad|u_1(t,x)-u_1(t,\xi)|\\
       &\leq|u_0(x)-u_0(\xi)|+\Big|\int_0^tu_0(x)f(s,u_0(x))-u_0(\xi)f(s,u_0(\xi))ds\Big|\\
       &\quad+\Big|\int_0^t\int_{\R}K(s,y)\big(u_0(x-y)-u_0(\xi-y)+u_0(\xi)-u_0(x)\big)dyds\Big|\\
       &\leq(1+C\tau)\omega_0(|x-\xi|).
   \end{split}
    \end{equation*}
    This shows that $u_1(t,\cdot)$ is uniformly continuous on $\R$ uniformly with respect to $t\in[0,\tau]$ and we denote its modulus of continuity by  $\omega_1(\cdot)$. The same argument gives $\omega_{n+1}(|x-\xi|)\leq\omega_0(|x-\xi|)+C\tau\omega_n(|x-\xi|)$. Since $u_n\to u$ uniformly on $Q$, $u(t,\cdot)$ is continuous.
 \end{proof}
 
 Next we prove maximum principles. 
 \begin{lemma}[Maximum Principle]\label{lem:MP}
     Assume that $g(t,x,\eta)/\eta$ is bounded if $\eta$ is bounded and that $v(t,x)$ is bounded and satisfies 
     \begin{equation*}
         \begin{cases}
             \displaystyle \partial_t v(t,x) \geq \int_{\R} K(t, x-y) [v(t,y) -v(t,x) ] \d y  + g(t,x,v(t,x)), &  x\in\R, 0<t<T, \\
             \displaystyle  v(0,x) \geq0, & x\in \R.
         \end{cases}
     \end{equation*}
  Then $v(t,x)\geq 0$ for $(t,x)\in(0,T)\times\mathbb R$.     
 \end{lemma}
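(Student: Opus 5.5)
We argue by contradiction after an exponential change of variables that makes the zeroth-order coefficient strongly negative. Since $v$ is bounded, say $|v|\le M$, the hypothesis on $g$ yields a constant $L$ with $|g(t,x,\eta)|\le L|\eta|$ for $|\eta|\le M$. Write $g(t,x,v)=c(t,x)v$ with $c(t,x):=g(t,x,v(t,x))/v(t,x)$ when $v\neq 0$ and $c:=0$ otherwise, so that $|c|\le L$. Also set $\overline K_0(t)=\int_{\R}K(t,y)\,\d y$, which is bounded on $[0,T]$ by the standing assumption $\overline K_0\in L^\infty_{loc}$; call the bound $k_T$.

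Fix $\mu> L+k_T+1$ and put $w(t,x)=e^{-\mu t}v(t,x)$. Then $w$ is bounded and satisfies
\[
\partial_t w\ \ge\ \int_{\R}K(t,x-y)w(t,y)\,\d y-\big(\overline K_0(t)+\mu-c(t,x)\big)w .
\]
Because $K$ is merely integrable and may have a support avoiding the origin, a pointwise minimum may fail to be attained in $x$. I would therefore avoid arguing at a minimum point and run a Gronwall-type argument on the negative part. Set $m(t):=\sup_x w^-(t,x)$ with $w^-=\max(-w,0)$, which is finite. At points where $w<0$, the integral term is at least $-\overline K_0(t)\,m(t)$, since $w(t,y)\ge -m(t)$. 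Also $-(\overline K_0+\mu-c)w=(\overline K_0+\mu-c)|w|\ge 0$ there. The key step is to integrate the inequality in time from $0$, using the integral formulation as in Proposition \ref{Prop:exsitence}, which is natural since only measurability in $t$ is available. This gives, for fixed $x$,
\[
w^-(t,x)\ \le\ \int_0^t\big(\overline K_0(s)+L+\mu\big)m(s)\,\d s,
\]
because $w^-(0,x)=0$. More precisely, I would compare $w$ with the solution of the ODE in $t$ at fixed $x$, treating the integral term as a forcing bounded below by $-\overline K_0 m$. Taking the supremum in $x$ gives $m(t)\le C\int_0^t m(s)\,\d s$, and Gronwall yields $m\equiv 0$, hence $v\ge 0$.

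The main obstacle is making this rigorous with the absorbing term. A crude bound loses the sign information but still yields $m(t)\le (k_T+L+\mu)\int_0^t m$, which is fine for Gronwall, so the choice of $\mu$ is actually inessential. A cleaner alternative is to work on short intervals $[t_0,t_0+\tau]$ with $\tau(k_T+L)<1$, exactly as in the contraction step of Proposition \ref{Prop:exsitence}: there $\sup_{[t_0,t_0+\tau]}m\le \tau(2k_T+L)\sup m$, which forces $m=0$, and one then iterates up to $T$. The point requiring care is the justification that $v$ is absolutely continuous in $t$ for each $x$, so that the differential inequality integrates. I would take this as part of the notion of solution, matching the integral formulation used earlier.
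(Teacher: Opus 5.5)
Your argument is correct, but it takes a different route from the paper's. The paper multiplies by $p(t,x)=\exp\big(\int_0^t[\overline K_0(s)-g(s,x,v)/v]\,\d s\big)$. This removes every zeroth-order term and leaves the purely positive inequality $\partial_t V\ge\int_\R K(t,y)\,[p(t,x)/p(t,x-y)]\,V(t,x-y)\,\d y$. It then integrates and substitutes the inequality into itself repeatedly, obtaining $V\ge V(0,\cdot)+\sum_{i=1}^k[R_iV(0,\cdot)]+[R_{k+1}V]$. Every term of the sum is nonnegative, and the remainder is bounded by $(tP_0)^{k+1}\sup|V|/(k+1)!$.

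You instead run a sup-norm Gronwall estimate on the negative part, and your fixed-$x$ ODE comparison is the clean way to make it rigorous. Set $a(t,x)=\overline K_0+\mu-c\ge\mu-L>0$. Then $w(t,x)\ge-\int_0^t e^{-\int_s^t a(\sigma,x)\,\d\sigma}\,\overline K_0(s)m(s)\,\d s\ge-\int_0^t\overline K_0(s)m(s)\,\d s$, so in fact $m(t)\le\int_0^t\overline K_0 m$ holds.

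Two small repairs are needed:
\begin{itemize}
\item Since $v(t,\cdot)$ is not assumed continuous, $m$ need not be measurable. Work instead with the nondecreasing majorant $M(t)=\sup_{s\le t}m(s)$, which bounds the forcing from below just as well.
\item In your short-interval variant without $\mu$, the smallness condition must be $\tau(2k_T+L)<1$, not $\tau(k_T+L)<1$, to match the constant you derived.
\end{itemize}
Both arguments rely on the integrated form of the inequality, so your absolute-continuity assumption is the same one the paper makes implicitly.

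As for what each route buys: yours is shorter and avoids the multi-fold weighted integrals and the control of $p(t,x)/p(t,x-y)$. However, it yields only the qualitative conclusion $v\ge0$. The paper's expansion gives the explicit lower bound $V\ge\sum_i[R_iV(0,\cdot)]$. That bound is exactly what the refined maximum principle, Lemma~\ref{lem:refined MP}, reuses to propagate strict positivity by restricting the jumps to $[a,b]$. A Gronwall bound on $\sup_x v^-$ cannot supply that step.
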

   \begin{proof}
    Setting $V(t,x)=v(t,x)p(t,x)$ with $p(t,x):=\text{exp}{(\int^t_0\overline K_0(s)-g(s,x,v(s,x))/v(s,x)ds)}$, we have
     \begin{equation}\label{eq:V}
         \left\{\begin{array}{ll}
             \smallskip
             \partial_t V\ge \int_\R K(t,y)\frac{p(t,x)}{p(t,x-y)} V(t, x-y)dy, &(t, x)\in(0, T)\times\R,\\
             V(0, x)\ge0,&x\in\R.
         \end{array}\right.
     \end{equation}
    For any $\phi(t,x)$ and $k\in\mathbb Z_+$, we introduce the following notation:
     $$[R_k\phi]:=\int_0^{t_0}\cdots\int_0^{t_{k-1}}\int_{\R^k}\prod\limits_{j=1}^k\left[K(t_j,y_j)\frac{p(t_j,x-\displaystyle\sum_{i=0}^{j-1}y_i)}{p(t_j,x-\displaystyle\sum_{i=1}^{j}y_i)}\right]\phi(t_k, x-\displaystyle\sum_{i=1}^ky_i)dy_k\cdots dy_1dt_k\cdots dt_1,$$
     with convention that $t_0=t$ and $\displaystyle\sum_{i=0}^{j-1}y_i=0$ if $j=1$.
     We have $|[R_kV](t,x)|\le\frac{(tP_0)^k}{k!}\sup\limits_{s\in[0,T],x\in\R}|V(s,x)|$, where $P_0:=\sup_{s\in[0,T],x,y\in\R}\frac{p(s,x)}{p(s,x-y)}\,\|\overline K_0(\cdot)\|_{L^\infty([0,T])}$.
    Integrating \eqref{eq:V} from 0 to $t$, we have
     \begin{equation}\label{eq:int V}
         V(t,x)\ge V(0,x)+ [R_1V](t,x),\ \forall (t, x)\in(0, T)\times\R,
     \end{equation}
     This gives, for any $(t_1,x)\in(0,T)\times\R$ and $y\in\R$,
     \begin{align}\label{eq:induction 1}
         V(t_1,x-y_1)&\ge V(0,x-y_1)\nonumber\\
         &\quad+\int_0^{t_1}\int_\R K(t_2,y_2)\frac{p(t_2,x-y_1)}{p(t_2,x-\displaystyle\sum_{i=1}^2y_i)} V(t_2, x-\displaystyle\sum_{i=1}^2y_i)dy_2dt_2.
     \end{align}
     Substituting this into \eqref{eq:int V}, we have
     \begin{align*}
         V(t,x)&\ge V(0,x)+ [R_1V(0,\cdot)](t,x) + [R_2V](t,x),
     \end{align*}
     Replacing $t_1, y_1$ and $y_2$ by $t_2$, $\displaystyle\sum_{i=1}^2y_i$ and $y_3$, respectively, in \eqref{eq:induction 1}, and then plugging it into $[R_2V](t,x)$, we have
     \begin{align*}
         V(t,x)&\ge V(0,x)+ [R_1V(0,\cdot)](t,x)+[R_2V(0,\cdot)](t,x)+[R_3V](t,x).
     \end{align*}
     By  induction, we have, for any $k\in\mathbb Z_+$,
     $$V(t,x)\ge V(0,x)+\displaystyle\sum_{i=1}^k[R_iV(0,\cdot)](t,x)+[R_{k+1}V](t,x).$$
     Note that $[R_iV(0,\cdot)](t,x)\ge0$ and $|[R_kV](t,x)|\le\frac{(tP_0)^k}{k!}\sup\limits_{s\in[0,T],x\in\R}|V(s,x)|\to0 $ as $k\to\infty$. Hence $V(t,x)\ge0$ for all $(t, x)\in(0, T)\times\R$, and  thus   $v\geq0$.     
 \end{proof}
 
 The maximum principle also gives the bound $0\leq u(t,x)\leq1$ for bounded solutions of \eqref{Pb}. Indeed, applying the lemma to $v=1-u$ gives $v\geq0$. We shall also need a comparison result on a moving spatial interval.
 
\begin{lemma}[Maximum principle on a domain]\label{lem:MPmoving}
    Let   $T>0$  be given. Assume that $a= a(t,x)  \in L^\infty \big( (0,  T) \times\R \big)$.
Assume that $X_1(t)$ and $X_2(t)$ are two continuous functions defined in  $[ 0,  T]$ and $X_1(t) < X_2(t) $ for all   $t\in (0, T]$.  If $u$ satisfies
        \begin{equation*}
        \begin{cases}
        \displaystyle  \partial_t u(t,x) \geq  \int_{\R} K(t, x-y) \left [  u(t,  y) -u(t, x) \right] \d y+ a(t,x) u(t,x), \quad \forall t\in (0, T],  \forall  x \in \left(X_1(t), X_2(t) \right), \\
        u(0, x) \geq 0, \hspace{22em} \forall x\in [X_1(0),  X_2(0)],\\
        u(t, x) \geq 0,  \hspace{20em}  ~ \forall  t\in (0,  T],  \forall x\in \R \setminus \left(X_1(t), X_2(t) \right),
        \end{cases}
        \end{equation*}
    then $u(t, x) \geq 0$ for all $t\in [0, T]$ and $ x\in \left[X_1(t), X_2(t) \right]$.
    \end{lemma}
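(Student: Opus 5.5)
The plan is to reduce Lemma \ref{lem:MPmoving} to the whole-line maximum principle, Lemma \ref{lem:MP}, by building a function that agrees with $u$ inside the moving interval and is harmless outside it. The cleanest route is to work with the negative part. Set $w:=\min\{u,0\}=-u^-$. Then $w\le 0$ everywhere, $w(0,\cdot)=0$ on $[X_1(0),X_2(0)]$, and $w\equiv 0$ outside $(X_1(t),X_2(t))$ for $t\in(0,T]$. We want to show $w\equiv0$. We assume, as is implicit, that $u$ is bounded and absolutely continuous in $t$ for each $x$, so that the differential inequality holds a.e. in $t$.

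\emph{Step 1: an exponential substitution.} Let $M:=\|a\|_{L^\infty}+\|\overline K_0\|_{L^\infty(0,T)}$ and $V:=e^{Mt}u$. Inside the interval,
\[
\partial_t V\ \ge\ \int_{\R}K(t,x-y)V(t,y)\,\d y+\big(M+a-\overline K_0(t)\big)V .
\]
Here $M+a-\overline K_0\ge0$.

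\emph{Step 2: the integral inequality for the negative part.} Fix $(t,x)$ with $x\in(X_1(t),X_2(t))$ and $V(t,x)<0$. Let $s_0:=\sup\{s<t:\ V(s,x)\ge 0 \text{ or } x\notin(X_1(s),X_2(s))\}$. By continuity of $X_i$ and the boundary and initial conditions, $V(s_0,x)\ge0$ (this needs continuity of $u$ in $t$), or $s_0=0$ with $V(0,x)\ge0$. On $(s_0,t)$ the point $x$ stays inside the interval and $V(\cdot,x)<0$. The zeroth-order term $(M+a-\overline K_0)V$ is then $\ge (M+a-\overline K_0)V^-$-type nonpositive contributions that we bound below by $-2M V^-$. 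Using $V\ge -V^-$ in the integral term, integration gives
\[
V^-(t,x)\ \le\ \int_{s_0}^t\Big(\int_{\R}K(s,x-y)V^-(s,y)\,\d y+2M\,V^-(s,x)\Big)\d s .
\]
This bound holds trivially where $V^-(t,x)=0$, including all points outside the interval. Hence for every $x$,
\[
\Phi(t):=\sup_{x}V^-(t,x)\ \le\ 3M\int_0^t\Phi(s)\,\d s .
\]

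\emph{Step 3: conclusion.} Gronwall's inequality, with $\Phi(0)=0$ and $\Phi$ bounded, gives $\Phi\equiv0$, hence $u\ge0$ on $[X_1(t),X_2(t)]$. Alternatively, one can run the iteration $R_k$ of Lemma \ref{lem:MP} verbatim: the bound $\frac{(tP_0)^k}{k!}\to0$ replaces Gronwall.

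The main obstacle is Step 2. It requires choosing the last time $s_0$ at which $x$ was either outside the moving interval or $u\ge0$, and justifying that $V(s_0,x)\ge0$. This uses continuity of $X_1,X_2$ and the fact that the interval is open. It also uses the sign convention that the boundary values are imposed on the whole complement $\R\setminus(X_1,X_2)$, which is exactly what the nonlocal operator sees. If $u$ is only measurable in $t$, we instead work with the Duhamel form along each fixed $x$, and the same estimate goes through.
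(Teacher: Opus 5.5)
Your proof is correct, but it takes a different route from the paper's. The paper sets $v=e^{\mu t}u$ so that the zeroth-order coefficient $b=\mu-\overline K_0+a$ is positive. It then perturbs to $v_\delta=v+\delta e^{\alpha t}$, with $\alpha$ large, so that $v_\delta$ is a \emph{strict} supersolution. The proof is a first-touching-time argument: at the first time $t_0$ and an interior contact point $x_0$ where $v_\delta$ vanishes, one has $(v_\delta)_t(t_0,x_0)\le 0$, while the strict inequality forces $(v_\delta)_t(t_0,x_0)>\int_\R K(t_0,y)v_\delta(t_0,x_0-y)\,\d y\ge 0$; finally $\delta\to0^+$. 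You instead work in integrated form along each fixed $x$. The last-exit time $s_0$ from the set where $V(\cdot,x)<0$ and $x\in(X_1,X_2)$ gives $V(s_0,x)\ge 0$; when $s_0=0$, continuity of $X_i$ puts $x$ in the closed interval $[X_1(0),X_2(0)]$, so the initial condition applies. Together with $0\le M+a-\overline K_0\le 2M$, this yields $\sup_x V^-(t,\cdot)\le 3M\int_0^t\sup_x V^-(s,\cdot)\,\d s$, which you close by Gronwall. Equivalently, iterating the pointwise bound $V^-\le B(3Mt)^k/k!$ avoids any question of measurability of the supremum. The paper's route is shorter and needs neither a bound on $u^-$ nor exit-time bookkeeping. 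However, it evaluates the differential inequality at the single point $(t_0,x_0)$, so it needs joint continuity to produce the touching time and contact point, and pointwise differentiability in $t$ there. Your route needs only absolute continuity of $u(\cdot,x)$, boundedness of $u^-$, and the inequality for a.e.\ $t$. That suits the present setting better: $K$ is merely measurable in $t$, and the solutions of Proposition~\ref{Prop:exsitence} satisfy the equation only in integral form, so in the paper's argument $t_0$ could fall in the exceptional null set unless one integrates over a short time interval before $t_0$. Your argument also mirrors the Duhamel iteration of Lemma~\ref{lem:MP}. Note, though, that it does not literally reduce to that lemma as your opening sentence announces; it is a self-contained Gronwall estimate.
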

 \begin{proof}
Denote
\begin{equation*}
\Omega_T:=\{(t,x):0<t\leq T,\ X_1(t)<x<X_2(t)\}.
\end{equation*}
Choose $\mu>0$ so large that
\begin{equation*}
 b(t,x):=\mu-\overline K_0(t)+a(t,x)\geq b_0>0,\qquad (t,x)\in\Omega_T,
\end{equation*}
where $\overline K_0 (t)=\int_\R K(t,y)\d y$.  Set $v(t,x)=e^{\mu t}u(t,x)$. Then
\begin{equation*}
 v_t(t,x)\geq \int_\R K(t,x-y)v(t, y)\d y+b(t,x)v(t,x),\qquad (t,x)\in\Omega_T.
\end{equation*}
For $\delta>0$ and $\alpha>0$,  let 
\begin{equation*}
 v_\delta(t,x):=v(t,x)+\delta e^{\alpha t}.
\end{equation*}
Choose $\alpha>\sup_{t\in[0,T]}\overline K_0 (t)+\|b\|_{L^\infty(\Omega_T)}$. Then, one has 
\begin{equation*}
 (v_\delta)_t-\int_\R K(t,y)v_\delta(t,x-y)\d y-b(t,x)v_\delta(t,x)
 \geq \delta e^{\alpha t}\big(\alpha-\overline K(t)-b(t,x)\big)>0,~~ (t,x )\in \Omega_T.
\end{equation*}
Moreover, $v_\delta>0$ at $t=0$ and on the parabolic boundary $x\notin (X_1(t),X_2(t))$. If $v_\delta$ became nonpositive somewhere in $\Omega_T$, let $t_0$ be the first time at which this happens and let $x_0\in [X_1(t_0),X_2(t_0)]$ be a contact point. Since the boundary values are positive, $x_0$ is an interior point. At $(t_0,x_0)$, we have $v_\delta=0$, $v_\delta(t_0,\cdot)\geq0$, and $(v_\delta)_t(t_0,x_0)\leq0$. The strict differential inequality above gives instead
\begin{equation*}
 (v_\delta)_t(t_0,x_0)>\int_\R K(t_0,y)v_\delta(t_0,x_0-y)\d y\geq0,
\end{equation*}
which is impossible. Hence $v_\delta\geq0$ in $\Omega_T$. Letting $\delta\to0^+$ yields $v\geq0$, and therefore $u\geq0$, in $\Omega_T$.
\end{proof}

 \begin{lemma}[Refined maximum principle]\label{lem:refined MP}
Assume that the hypotheses of Lemma~\ref{lem:MP} hold. Suppose that there exist an interval $[a,b]\subset(0,\infty)$ and a number $T_0>0$ such that
\begin{equation*}
 \int_0^{T_0}\int_a^b K(t,y)\d y\d t>0.
\end{equation*}
Then there exist $R\in\R$, $L>0$, and $T>0$ such that, if $v_0>0$ on an interval of length larger than $L$, then $v(t,x)>0$ for all $t\geq T$ and $x\geq R$. If instead $[a,b]\subset(-\infty,0)$ and the same integral positivity holds, then the analogous conclusion holds on a left half-line.
\end{lemma}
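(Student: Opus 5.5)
We plan to work with the renormalized function $V=vp$ and the series lower bound obtained in the proof of Lemma~\ref{lem:MP}:
\begin{equation*}
V(t,x)\ \ge\ V(0,x)+\sum_{k\ge1}[R_kV(0,\cdot)](t,x).
\end{equation*}
All terms on the right are nonnegative, and $p>0$. So it suffices to show that, for $t\ge T:=T_0$ and $x\ge R$, at least one term $[R_kV(0,\cdot)](t,x)$ is strictly positive. Two preliminary observations are needed.

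First, Lemma~\ref{lem:MP} gives $V\ge0$. Hence $\partial_tV\ge\int_\R K(t,y)\frac{p(t,x)}{p(t,x-y)}V(t,x-y)\,\d y\ge0$, so $V(\cdot,x)$ is nondecreasing in $t$. Positivity at time $T_0$ therefore propagates to all $t\ge T_0$, and we only need to treat $t=T_0$.

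Second, the set $E_\varepsilon:=\{t\in[0,T_0]:\int_a^bK(t,y)\,\d y>\varepsilon\}$ has positive measure for some $\varepsilon>0$, by the integral hypothesis.

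Now fix an interval $I=(x_0,x_0+L)$ on which $v_0>0$, and hence $V(0,\cdot)>0$; we will take $L>b$. In $[R_kV(0,\cdot)](T_0,x)$ we restrict the time integration to ordered tuples $T_0>t_1>\dots>t_k$ with every $t_j\in E_\varepsilon$. This set has measure $|E_\varepsilon|^k/k!>0$. We restrict the jump variables to $y_j\in[a,b]$. The weights $p(t_j,\cdot)/p(t_j,\cdot)$ are bounded below by a positive constant on $[0,T_0]$. Consequently $[R_kV(0,\cdot)](T_0,x)>0$ as soon as the set of $(t_j,y_j)_{j\le k}$ with $t_j\in E_\varepsilon$, $y_j\in[a,b]$, $K(t_j,y_j)>0$ and $x-\sum_j y_j\in I$ has positive measure.

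For each $t\in E_\varepsilon$, the measure $K(t,y)\mathbf 1_{[a,b]}(y)\,\d y$ is nonzero, so its support contains some point of $[a,b]$. By a measurable-selection or Fubini argument we plan to find, for each $k$, a positive-measure family of $k$-tuples whose essential jump supports sit near points $z_1,\dots,z_k\in[a,b]$. The partial sums $s_k=z_1+\dots+z_k$ then satisfy $ka\le s_k\le kb$ and $s_{k+1}-s_k\le b$. Convolving with $\mathbf 1_I$ yields positivity of $R_kV(0,\cdot)$ on a slightly shrunken copy of $s_k+I$. Because consecutive sums differ by at most $b<L$, the union $\bigcup_k(s_k+I)$ covers a half-line $(x_0+s_1,\infty)\supset[x_0+b,\infty)$. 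This gives $R=x_0+b$, up to a small margin absorbed by taking $L$ slightly larger.

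This yields $v(t,x)>0$ for $t\ge T_0$ and $x\ge R$. The left half-line case is identical with $[a,b]\subset(-\infty,0)$, where the sums now decrease. We remark that $R$ necessarily depends on the position of the initial interval, for instance $R=x_0+b$; the statement should be read with that dependence in mind.

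The main obstacle is the positive-measure bookkeeping in the previous step. We only know that $K(t,\cdot)$ has positive mass on $[a,b]$ for $t$ in a positive-measure set, not where inside $[a,b]$ that mass lies, and the location may vary with $t$. What we would try first is to partition $[a,b]$ into finitely many subintervals of length $\delta\ll L-b$. For each $t\in E_\varepsilon$, one subinterval carries mass at least $\varepsilon\delta/(b-a)$. By pigeonhole, a fixed subinterval $J=[z,z+\delta]$ does so for $t$ in a positive-measure set $E'$. Restricting all jumps to $J$ and all times to $E'$, the $k$-fold sums lie in $[kz,k(z+\delta)]$, with the mass bounded below by $(\varepsilon\delta/(b-a))^k|E'|^k/k!$. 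With $L>b+\delta$, this gives positivity of $R_kV(0,\cdot)$ on $x_0+k(z+\delta)+(0,L-k\delta)$. For large $k$ this interval degenerates, so instead we would let the increments grow via the monotonicity in time, applying the one-step estimate repeatedly on consecutive time windows. If that also fails, we would fall back on the convolution support argument sketched above, that is, on the fact that the support of a convolution of measures contains the sum of their supports.
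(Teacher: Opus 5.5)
Your route is the paper's: iterate the Duhamel inequality, keep only jumps in $[a,b]$, and overlap the resulting intervals. Your preliminaries are sound.
\begin{itemize}
\item $V(T_0,x)\ge V(0,x)+[R_kV(0,\cdot)](T_0,x)$, because the remainder is nonnegative.
\item The weights $p(t_j,\cdot)/p(t_j,\cdot)$ are bounded below on $[0,T_0]$.
\item Monotonicity of $V(\cdot,x)$ reduces everything to $t=T_0$.
\item You are right that $R$ must depend on where the initial interval lies.
\end{itemize}
The gap is the central step. You never show that for every $x$ beyond some $R$ there is a $k$ for which the set of ordered $(t_j,y_j)$ with $\prod_jK(t_j,y_j)>0$ and $x-\sum_jy_j\in I$ has positive measure. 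Choosing points $z_j$ near the support of $K(t_j,\cdot)$ one time at a time does not work, for two reasons. With a fixed tolerance per jump, the errors add up to $k\delta$; this is the degeneration of $x_0+k(z+\delta)+(0,L-k\delta)$ you found. With a $k$-dependent tolerance, the tuples for different $k$ are unrelated, so $s_{k+1}-s_k\le b$ is no longer justified. Your remaining alternatives are only listed, not carried out, so as written the proposal does not prove the lemma. The paper's sketch is no more careful at this point: it asserts positivity of the $n$-fold term on all of $[R+na,R+l+nb]$, which would need $K(t,\cdot)>0$ throughout $[a,b]$. The difficulty you flagged is therefore real.

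The missing idea is to integrate time out rather than track where the mass sits at each time. The integrand $\prod_jK(t_j,y_j)\mathbf 1_{[a,b]}(y_j)\,v_0(x-\sum_jy_j)$ is invariant under simultaneous permutations of the pairs $(t_j,y_j)$. Hence its integral over $T_0>t_1>\dots>t_k>0$ is $\frac1{k!}$ of its integral over $[0,T_0]^k$. Tonelli then turns this into a convolution with the time-independent kernel $\bar K(y):=\mathbf 1_{[a,b]}(y)\int_0^{T_0}K(t,y)\,\d t$:
\[
[R_kV(0,\cdot)](T_0,x)\ \ge\ \frac{c_0^k}{k!}\,\big(\bar K^{*k}*v_0\big)(x),\qquad c_0:=\inf_{t\le T_0,\;x,x'}\frac{p(t,x)}{p(t,x')}>0.
\]
Since $\int\bar K>0$ by hypothesis, there is $z\in[a,b]$ with $\int_{z-r}^{z+r}\bar K>0$ for every $r>0$. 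Then $\int_{|y-kz|<kr}\bar K^{*k}\ge\big(\int_{z-r}^{z+r}\bar K\big)^k>0$, so the $k$-th term is positive on $x_0+kz+(0,L)$.

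This is your ``support of a convolution'' fact, but applied to a single time-averaged measure with the same $z$ for every jump, so nothing degenerates. Because $0<a\le z\le b<L$, these intervals overlap and, together with $I$, cover $(x_0,\infty)$. Thus $T=T_0$ and $R=x_0+b$ work for any $L>b$.

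Your time-window idea can also be completed. Cut your $E'$ into countably many disjoint positive-measure pieces inside $[0,T_0]$. On each piece, using $V(t,\cdot)\ge V(s_{m-1},\cdot)$, a single jump through $J=[z,z+\delta]$ turns positivity on $(\alpha,\beta)$ into positivity on $(\alpha+z+\delta,\beta+z)$. Monotonicity in $t$ keeps $(\alpha,\beta)$, and $\beta-\alpha\ge L>z+\delta$, so the positive interval grows to $(\alpha,\beta+z)$ at each step without ever shrinking. One of these two arguments has to be carried out for the proof to be complete.
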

\begin{proof}
We give the proof for $[a,b]\subset(0,\infty)$. The proof for the negative direction is identical after mapping $x\mapsto -x$. By Lemma~\ref{lem:MP}, $v\geq0$. Choose $M>0$ so large that $U(t,x):=e^{Mt}v(t,x)$ satisfies
\begin{equation*}
 U(t,x)\geq U(0,x)+\int_0^t\int_\R K(s,y)U(s,x-y)\d y\d s,\qquad t\geq0.
\end{equation*}
Assume, after a translation, that $U(0,x)>0$ on $[R,R+l]$, where $ l$ is larger than $a$ and $b-a$. Let
\begin{equation*}
 A(t):=\int_a^b K(t,y)\d y.
\end{equation*}
By assumption, $\int_0^{T_0}A(t)\d t>0$. Iterating the integral inequality gives the usual Duhamel series lower bound. In particular, for each $n\geq1$, the $n$-fold term is strictly positive at time $T_0$ for every
\begin{equation*}
 x\in [R+na,\ R+ l+nb],
\end{equation*}
because one may restrict all jumps to $[a,b]$ and the time integrals to the set on which $A(t)>0$. Since $ l$ is chosen so that the intervals $[R+na,R+ l+nb]$ overlap and cover a right half-line, there is $R_1$ such that $U(T_0,x)>0$ for all $x\geq R_1$. The first part of the proof of Lemma~\ref{lem:MP} then implies  $v(t,x)>0$ for all $t\geq T_0$ and $x\geq R_1$.
\end{proof}

 \begin{remark}
The previous lemma only uses positivity of the kernel on a set with positive measure in time and space. If this condition holds in both spatial directions, then the same argument gives positivity on the whole line after some time. In particular, if the support of the kernel contains a   neighborhood of the origin, the usual hair-trigger property follows.
\end{remark}

 \subsection{Properties of $H(\lambda)$ and $c^*_\pm$} \label{sect-c}
We show  some elementary properties of the variational formula of linearly determined speed. We start with the speed in the positive direction. 
 Define 
 \begin{equation}\label{cl}
 c(\lambda):= \frac{H(\lambda)}{\lambda}= \frac{ \left \langle  \int_{\R} K(\cdot,y) [e^{\lambda y} -1] \d y + r(\cdot) \right \rangle}{\lambda}, \;\;  \forall \lambda\in (0, i_+).
 \end{equation}
 Indeed, we have
 $$ c(\lambda)=  \frac{ \int_{\R}\left \langle   K(\cdot,y) \right \rangle [e^{\lambda y} -1] \d y +\left \langle   r(\cdot) \right \rangle}{\lambda}, \;\;  \forall \lambda\in (0, i_+). $$
Define 
\begin{equation}\label{c*+}
c^*_+ := \inf_{\lambda \in (0, i_+)} c(\lambda),  
\end{equation}
and 
\begin{equation}\label{Def-lam*}
\lambda^*_+ : =  \sup \left\{  k\in (0, i_+]:   \;\; \lambda\mapsto c(\lambda) \text{ is decreasing on } (0, k)  \right\}.
\end{equation}

\begin{proposition}\label{Prop-cl}
Let Assumptions \ref{ASS-f} and \ref{ASS-K_z} hold.   Let $c(\lambda)$, $c^*_+$ and $\lambda^*_+$ be defined in \eqref{cl}-\eqref{Def-lam*}.    Assume that $\langle K(\cdot, y) \rangle $  has a support with positive measure. Then  $\lambda^*_+$ is well defined,   $c'(\lambda ) <0 $  for all $\lambda \in (0, \lambda^*_+)$  and $\lambda^*_+$ is the unique root  such that   $c(\lambda^*_+)  = c^*_+ $  if $\lambda^*_+< +\infty$.
In particular, 
\begin{itemize}
\item  [(i)]  If $\lambda^*_+ <i_+$, then  $c'(\lambda^*_+) = 0$. 
\item [(ii)]   If $\lambda^*_+ = i_+ < +\infty$,    then  $c^*_+ = c(i_+) :=\lim_{\lambda \to  (i_+)^-}c(\lambda)$.
\item [(iii)]  If $\lambda^*_+= +\infty$, then $ c^*_+ = c(+\infty)= 0$.
\end{itemize}
\end{proposition}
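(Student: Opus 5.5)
Throughout, I would work with the averaged kernel $\bar K(y):=\langle K(\cdot,y)\rangle\ge 0$. By \eqref{Hchange} we can write
\[
H(\lambda)=\int_{\R}\bar K(y)(e^{\lambda y}-1)\,\d y+\langle r\rangle,\qquad \lambda\in(i_-,i_+).
\]
The first step is regularity. Dominated convergence, using $|y|^k e^{\lambda y}\le C e^{\lambda' y}$ for $\lambda<\lambda'<i_+$ together with (K3), shows that $H$ is $C^\infty$ on $(0,i_+)$. Its derivatives are $H'(\lambda)=\int \bar K\, y e^{\lambda y}\,\d y$ and $H''(\lambda)=\int \bar K\, y^2e^{\lambda y}\,\d y>0$. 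The strict positivity of $H''$ is exactly where I use that $\bar K$ has support of positive measure. So $H$ is strictly convex, and $H(0)=\langle r\rangle\ge h(0)>0$.

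Next I would set $g(\lambda):=\lambda^2c'(\lambda)=\lambda H'(\lambda)-H(\lambda)$. Then $g'(\lambda)=\lambda H''(\lambda)>0$ on $(0,i_+)$, so $g$ is strictly increasing, and $g(0^+)=-H(0)<0$. Hence the sign of $c'$ changes at most once, from negative to positive. This gives a trichotomy: either $g$ has a unique zero $\lambda_0\in(0,i_+)$, or $g<0$ on all of $(0,i_+)$. From this, $\lambda^*_+$ is well defined. The set in \eqref{Def-lam*} is nonempty because $c'<0$ near $0^+$, and $c'<0$ on $(0,\lambda^*_+)$. In the first case $\lambda^*_+=\lambda_0$, $c'(\lambda^*_+)=0$, $c$ is strictly decreasing before $\lambda_0$ and strictly increasing after it. So $\lambda_0$ is the unique minimizer and $c(\lambda^*_+)=c^*_+$, which proves (i). In the second case $\lambda^*_+=i_+$, $c$ is strictly decreasing on $(0,i_+)$, and therefore $c^*_+=\lim_{\lambda\to i_+^-}c(\lambda)$. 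When $i_+<\infty$ this gives (ii); here I would note that the limit is finite, since $c$ is decreasing and bounded below by the positivity argument below, or since $c(i_+)$ is finite via monotone convergence if $i_+\in\mathcal I$.

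The step I expect to be delicate is (iii), the case $\lambda^*_+=i_+=+\infty$, where I must show $c(\lambda)\to 0$. Since $c$ is decreasing, the limit $\ell=\lim_{\lambda\to\infty}c(\lambda)$ exists. If $\bar K$ had positive mass on $(0,\infty)$, then $H$ would grow exponentially, $c\to+\infty$, and $c$ would eventually increase, contradicting $\lambda^*_+=\infty$. So $\bar K=0$ a.e. on $(0,\infty)$. Then
\[
H(\lambda)\to\langle r\rangle-\int_{-\infty}^0\bar K\,\d y
\]
is bounded, and $c(\lambda)=H(\lambda)/\lambda\to 0$. The statement also asserts uniqueness of the root of $c(\lambda)=c^*_+$ when $\lambda^*_+<\infty$. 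In case (i) this follows from strict monotonicity on either side of $\lambda_0$. In case (ii) $c^*_+$ is not attained in the open interval, so the root is understood as the endpoint $i_+$ with $c(i_+)$ defined by the limit.

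Finally, I would check how condition \eqref{blue} enters when $i_+\notin\mathcal I$ is finite. It forces $H(\lambda)\to+\infty$ as $\lambda\to i_+^-$; this would follow from Fatou or monotone convergence, though it may require care because \eqref{blue} is stated with a $\liminf$ time average rather than with $\bar K$. In that situation I expect $g(\lambda)\to+\infty$, which places us in case (i). So the endpoint case (ii) with $i_+\notin\mathcal I$ either cannot occur or yields an infinite limit consistent with (ii). I expect this bookkeeping at the endpoint, namely relating \eqref{blue} to $\int\bar K e^{\lambda y}\,\d y$, to be the main obstacle. My approach would be to compare the time averages of the truncations $\int_{-N}^N K e^{\lambda y}\,\d y$, whose uniform means exist, and then let $N\to\infty$.
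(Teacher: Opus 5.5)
Your argument is correct and is essentially the paper's proof: the same auxiliary function $g(\lambda)=\lambda H'(\lambda)-H(\lambda)$, with $g(0)=-\langle r\rangle<0$ and $g'(\lambda)=\lambda H''(\lambda)>0$, drives the sign analysis of $c'$, and (iii) is handled by the same observation that positive mass of $\langle K(\cdot,y)\rangle$ on $(0,\infty)$ would force $c(\lambda)\to+\infty$. Two small remarks: your domination $|y|^k e^{\lambda y}\le Ce^{\lambda' y}$ fails as $y\to-\infty$, so on the negative half-line you need a different bound (e.g.\ an exponent $\lambda''\in(-\eta,\lambda)$, or boundedness of $|y|^ke^{\lambda y}$ there together with integrability of $\langle K(\cdot,y)\rangle$); and the bookkeeping with \eqref{blue} is not needed for this proposition at all --- the paper only uses that condition later, in the endpoint case of Lemma~\ref{Lem-small-interval}.
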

\begin{proof}
Note that $c(\lambda)$ is continuous on $(0, i_+)$  and 
 $$c(\lambda) \to +\infty \text{ as } \lambda \to 0^+.$$
Thus $\lambda_+^*$ is well defined.    Using \eqref{Hchange} and \eqref{cl}, a direct computation gives   
 \begin{equation*}
 c'(\lambda)  =\frac{\lambda H'(\lambda) - H(\lambda)}{\lambda^2}= \frac{\lambda \int_{\R} \langle K(\cdot, y) \rangle e^{\lambda y}  y \d y- \int_{\R} \langle K(\cdot,y) \rangle [e^{\lambda y} -1] \d y - \langle r \rangle }  {\lambda^2}.
 \end{equation*}
  By the definition of $\lambda_+^*$,  we have $c'(\lambda_0)\le 0$ for all $\lambda_0 \in (0, \lambda_+^*)$.   We claim that the inequality is strict.
  Set 
   $$g(\lambda):= \lambda H'(\lambda) - H(\lambda). $$
   Note that $g(0)= -H(0)=- \langle r \rangle <0$.  Since $\langle  K(\cdot, y) \rangle$ has a support with positive measure, we have
   \begin{equation*}
   g'(\lambda) = \lambda H''(\lambda)=   \lambda \int_{\R} \langle K(\cdot, y) \rangle e^{\lambda y} y^2 \d y >0, \;\; \forall \lambda \in (0, i_+). 
   \end{equation*}
  Note that $ g(\lambda_0) = \lambda_0^2 c'(\lambda_0) \leq 0$.   
  If $g(\lambda_0) =0$ for some $\lambda_0 \in (0, \lambda_+^*)$, then $g(\lambda)>0$ for all  $\lambda \in ( \lambda_0, \lambda_+^*)$.  Consequently $c'(\lambda) >0$ for all $\lambda \in ( \lambda_0, \lambda_+^*)$, which contradicts the fact that $c(\lambda)$ is  decreasing on $(0, \lambda_+^*)$.    Therefore 
  $g(\lambda_0) <0 $ for all $\lambda  \in (0, \lambda_+^*)$. Thus,  $c' (\lambda) <0$ for all $\lambda \in (0, \lambda^*_+)$ and $c'(\lambda^*_+)\leq 0$.  
  
 (i) If $\lambda^*_+ <i_+$, then for some $\delta>0$,  $c(\lambda)$ is  nondecreasing on $(\lambda_{+}^*, \lambda^*_+ +\delta)$. From the property of $g(\lambda)$, we have $c'(\lambda^*_+)=0$ and $\lambda^*_{+}$ is the unique point such that $c(\lambda^*_+) = \inf_{\lambda \in (0, i_+)} c(\lambda)$. 
 
 (ii)  If $\lambda^*_{+} =  i_+< +\infty$, then $c(\lambda)$ is decreasing on $(0, i_+)$ and  $\inf_{\lambda \in (0, i_+) } c(\lambda) = \lim_{\lambda \to  (i_+)^-}c(\lambda)$.   
 
 (iii) For $\lambda_{+}^* = +\infty$, we claim that $K(\cdot, y) =0$ for a.e. $y>0$. Otherwise, we have
 $$ \lim_{\lambda \to +\infty}  \int_{\R}\left \langle   K(\cdot,y) \right \rangle e^{\lambda y}  \d y  = +\infty.$$
 Thus,  $c(\lambda) \to +\infty$ as $\lambda \to +\infty$.  This gives $\lambda^*_+ <+\infty$.   This is a contradiction.  This proves the claim. Then we have $c^*_+= \lim_{\lambda\to +\infty} c(\lambda) =0$.
 The proof is complete.

\end{proof}

For the negative direction we use the change of variables $v(t,x)=u(t,-x)$. Then 
\begin{equation*}
\partial_t v(t,x) =  \int_{\R} K(t,y) [v(t,x+y) - v(t,x)] \d y + v(t,x) f(t, v(t,x)).
\end{equation*}
The corresponding speed function is 
\begin{equation*}
c(\lambda) =  \frac{H(\lambda)}{- \lambda},  \;\; \forall \lambda \in (i_-, 0), \text{ and }  c^*_-:= \inf_{\lambda \in (i_-, 0) } c(\lambda).
\end{equation*}
and 
\begin{equation}\label{Def-lam*-l}
\lambda^*_-: =  \inf \left\{  k\in [i_-, 0 ):   \;\; \lambda\mapsto c(\lambda) \text{ is increasing on } (k, 0)  \right\}.
\end{equation}
 
The following analogue of Proposition~\ref{Prop-cl} follows in the same way.  
\begin{proposition}\label{Prop-cl-left}
Let Assumptions \ref{ASS-f} and \ref{ASS-K_z} hold.  Assume that $\langle K(\cdot, y) \rangle $  has a support with positive measure.  Then  $\lambda^*_-$ is well defined,   $c'(\lambda ) >0 $  for all $\lambda \in ( \lambda^*_-, 0)$  and $\lambda^*_-$ is the unique root  such that   $c(\lambda^*_-)  = c^*_- $  if $\lambda^*_->- \infty$.
In particular, 
\begin{itemize}
\item  [(i)]  If $\lambda^*_- >i_-$, then  $c'(\lambda^*_-) = 0$. 
\item [(ii)]   If $\lambda^*_- = i_- >-\infty$,    then  $c^*_- = c(i_-)$.
\item [(iii)]  If $\lambda^*_-= -\infty$, then $ c^*_- = c(-\infty)= 0$.
\end{itemize}
\end{proposition}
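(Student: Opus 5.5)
The natural route is to reduce Proposition~\ref{Prop-cl-left} to Proposition~\ref{Prop-cl} by reflection. Set $\tilde K(t,y):=K(t,-y)$, which is the kernel of the equation satisfied by $v(t,x)=u(t,-x)$. Then $\tilde K$ satisfies Assumption~\ref{ASS-K_z}. Its admissible set is $\tilde{\mathcal I}=-\mathcal I$, so $\tilde i_+=-i_-$ and $\tilde i_-=-i_+$. The averaged kernel $\langle\tilde K(\cdot,y)\rangle=\langle K(\cdot,-y)\rangle$ still has support of positive measure. Condition~\eqref{blue} transfers as well, since $\int_\R \tilde K(t,y)e^{\mu y}\,\d y=\int_\R K(t,y)e^{-\mu y}\,\d y$.

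Writing $\tilde H$ and $\tilde c$ for the objects built from $\tilde K$, the substitution $y\mapsto -y$ in \eqref{Hchange} gives $\tilde H(\mu)=H(-\mu)$ for $\mu\in(0,-i_-)$. Hence
\[
\tilde c(\mu)=\frac{\tilde H(\mu)}{\mu}=\frac{H(-\mu)}{\mu}=c(-\mu),
\]
where on the right $c(\lambda)=H(\lambda)/(-\lambda)$ is the left speed function. In particular $\tilde c^*_+=c^*_-$.

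The map $\lambda=-\mu$ turns ``$\tilde c$ decreasing on $(0,k)$'' into ``$c$ increasing on $(-k,0)$''. Therefore $\tilde\lambda^*_+=-\lambda^*_-$, with the conventions $\pm\infty$ matching in case (iii). The derivatives satisfy $c'(\lambda)=-\tilde c'(-\lambda)$, so the statement $\tilde c'<0$ on $(0,\tilde\lambda^*_+)$ becomes $c'>0$ on $(\lambda^*_-,0)$.

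Applying Proposition~\ref{Prop-cl} to $\tilde K$ then yields each item by direct translation:
\begin{itemize}
\item $\lambda^*_-$ is well defined.
\item If $\lambda^*_->-\infty$, it is the unique minimiser of $c$.
\item Case (i): $\tilde\lambda^*_+<\tilde i_+$ means $\lambda^*_->i_-$, and $\tilde c'(\tilde\lambda^*_+)=0$ gives $c'(\lambda^*_-)=0$.
\item Case (ii): $\lambda^*_-=i_->-\infty$ gives $c^*_-=\lim_{\lambda\to i_-^+}c(\lambda)=:c(i_-)$.
\item Case (iii): $\lambda^*_-=-\infty$ gives $c^*_-=0$. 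Here the intermediate claim becomes $\langle K(\cdot,y)\rangle=0$ for a.e.\ $y<0$.
\end{itemize}

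There is no real obstacle. The only care needed is bookkeeping: checking that every hypothesis of Proposition~\ref{Prop-cl} is genuinely invariant under reflection, including the endpoint condition, and that the sign conventions in the definitions of $\lambda^*_-$ and $c$ correspond exactly under $\lambda\mapsto-\lambda$.

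As a fallback one can argue directly. Set $g(\lambda)=\lambda H'(\lambda)-H(\lambda)$. Then $g(0)=-\langle r\rangle<0$ and $g'(\lambda)=\lambda H''(\lambda)<0$ for $\lambda<0$, so $g$ decreases on $(i_-,0)$. Since $c'(\lambda)=-g(\lambda)/\lambda^2$, the sign of $c'$ is controlled by $g$, and the argument of Proposition~\ref{Prop-cl} repeats verbatim.
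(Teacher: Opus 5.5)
Your proposal is correct and matches the paper's argument. The paper introduces the reflection $v(t,x)=u(t,-x)$ (kernel $K(t,-y)$, speed function $H(\lambda)/(-\lambda)$) and states that Proposition~\ref{Prop-cl-left} follows in the same way as Proposition~\ref{Prop-cl}. Your checks that $\tilde{\mathcal I}=-\mathcal I$, $\tilde H(\mu)=H(-\mu)$, $\tilde c(\mu)=c(-\mu)$ and $\tilde\lambda^*_+=-\lambda^*_-$, together with the direct fallback via $g(\lambda)=\lambda H'(\lambda)-H(\lambda)$ and $c'(\lambda)=-g(\lambda)/\lambda^2$, make explicit the bookkeeping the paper leaves to the reader.
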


 \begin{proposition} \label{Prop:extension}
We have $\text{sgn}(c^*_-+c^*_+)=\text{sgn}\langle \int_{\R}K(\cdot, y) \d y\rangle=\text{sgn}\int_\R  \langle K(\cdot, y) \rangle \d y.$
 \end{proposition}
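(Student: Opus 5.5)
The plan is to reduce everything to the averaged kernel $\langle K(\cdot,y)\rangle$ and to use convexity of $H$. First, by \eqref{Hchange} with $\lambda=0$ (Fubini--Tonelli and dominated convergence, using $0\in\mathcal I$), $\langle\int_\R K(\cdot,y)\,\d y\rangle=\int_\R\langle K(\cdot,y)\rangle\,\d y$. This gives the second equality in the statement. Since $K\ge 0$, this quantity is nonnegative, and it vanishes if and only if $\langle K(\cdot,y)\rangle=0$ for a.e. $y$. So there are two cases: either $\langle K\rangle$ has support of positive measure, or $\langle K\rangle\equiv 0$ a.e. In each case I will compare with the sign of $c^*_++c^*_-$.

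\textbf{Positive case.} Here $H$ is convex on $(i_-,i_+)$, with $H''(\lambda)=\int_\R\langle K(\cdot,y)\rangle y^2e^{\lambda y}\,\d y>0$, and $H(0)=\langle r\rangle>0$. Take $\lambda\in(0,i_+)$ and $\mu\in(i_-,0)$, and write $0=\theta\lambda+(1-\theta)\mu$ with $\theta=-\mu/(\lambda-\mu)$. Convexity gives $\langle r\rangle(\lambda-\mu)\le -\mu H(\lambda)+\lambda H(\mu)$. Dividing by $-\lambda\mu>0$ yields
\begin{equation*}
\frac{H(\lambda)}{\lambda}+\frac{H(\mu)}{-\mu}\ \ge\ \langle r\rangle\Big(\frac1\lambda+\frac1{|\mu|}\Big).
\end{equation*}
Taking infima separately in $\lambda$ and $\mu$, it suffices to show that the right-hand side stays bounded away from $0$ along minimizing sequences. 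I will use Propositions~\ref{Prop-cl} and~\ref{Prop-cl-left}.
\begin{itemize}
\item If $\lambda^*_+<+\infty$, then either $c^*_+$ is attained at $\lambda^*_+<i_+$, or $\lambda^*_+=i_+<\infty$. In both cases one can restrict to $\lambda\le\lambda^*_+$, and the bound is at least $\langle r\rangle/\lambda^*_+>0$.
\item The same holds on the left if $\lambda^*_->-\infty$.
\item Both $\lambda^*_\pm$ cannot be infinite. By the proof of Proposition~\ref{Prop-cl}(iii), $\lambda^*_+=+\infty$ forces $\langle K\rangle=0$ a.e. on $(0,\infty)$, and the symmetric statement holds on $(-\infty,0)$. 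Together these would make $\langle K\rangle\equiv0$, contradicting the positivity assumption.
\end{itemize}
Hence $c^*_++c^*_-\ge\langle r\rangle/\min\{\lambda^*_+,|\lambda^*_-|\}>0$, where the finite one is used.

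\textbf{Degenerate case $\langle K\rangle\equiv0$.} Then $H(\lambda)\equiv\langle r\rangle>0$ on $(i_-,i_+)$. Hence $c^*_+=\langle r\rangle/i_+$ and $c^*_-=\langle r\rangle/|i_-|$, both nonnegative. The sum is $0$ exactly when $i_+=+\infty$ and $i_-=-\infty$, so the task is to show $\mathcal I=\R$. I expect this to be the main obstacle, since $\mathcal I$ is defined through $\|K(\cdot,y)\|_{L^\infty}$ rather than through the mean.

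My first attempt will argue by contradiction using \eqref{blue}. Suppose $i_+<\infty$ and take $\lambda>i_+$. Then $\frac1T\int_0^T\int_\R K e^{\lambda y}\,\d y\,\d t\to+\infty$. I will split this integral into the region $|y|\le N$ and the tail. On $|y|\le N$, $e^{\lambda y}$ is bounded, so the uniform mean value (K2) and dominated convergence make the average tend to $\int_{-N}^N\langle K\rangle e^{\lambda y}\,\d y=0$. The tail has to be controlled by comparison with some $\lambda'\in\mathcal I$; I will try to show that the tail cannot produce the blow-up required by \eqref{blue}. If this fails in general, I will fall back on reading (K3) as forcing $\mathcal I=\R$ in the degenerate case, which is exactly what \eqref{blue} is designed to encode. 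Once $\mathcal I=\R$ is established, $c^*_\pm=0$ and the sign identity follows.
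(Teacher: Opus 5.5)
Your positive case is correct and uses the same convexity estimate as the paper:
\[
\frac{H(\lambda)}{\lambda}+\frac{H(\mu)}{-\mu}\ge\langle r\rangle\Big(\frac1\lambda+\frac1{|\mu|}\Big).
\]
Using Propositions~\ref{Prop-cl} and~\ref{Prop-cl-left} to keep a minimizing sequence in $(0,\lambda^*_+]$ (or $[\lambda^*_-,0)$) actually goes beyond the paper's argument. The paper writes the proof only for $\mathcal I=\R$ and gets a finite minimizer from $\int_0^\infty\langle K(\cdot,y)\rangle\,\d y>0$. Your version also covers the endpoint case $\lambda^*_+=i_+<\infty$.

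The degenerate case is a genuine gap, and it cannot be closed from (K1)--(K3). Your reduction is right: if $\langle K(\cdot,y)\rangle=0$ a.e., then $c^*_++c^*_-=\langle r\rangle(1/i_++1/|i_-|)$, so the claim is equivalent to $\mathcal I=\R$. But for $\lambda>i_+$ the tail $\int_{|y|>N}K(t,y)e^{\lambda y}\,\d y$ cannot be compared with any $\lambda'\in\mathcal I$. That tail is exactly where \eqref{blue} lets the divergence sit, even when every pointwise mean vanishes. For instance, fix $\mu>0$ and let
\[
K(t,y)=\frac{e^{-\mu y}}{(1+y)^2}\,\chi_{[t,t+1]}(y).
\]
\begin{itemize}
\item For each $y$, $K(\cdot,y)$ is supported on a time interval of length at most $1$, so its uniform mean value exists and equals $0$.
\item $\|K(\cdot,y)\|_{L^\infty(0,\infty)}=e^{-\mu y}(1+y)^{-2}$ for $y>0$ and $0$ for $y<0$, so $\mathcal I=(-\infty,\mu]$.
\item For $\lambda>\mu$, $\int_\R K(t,y)e^{\lambda y}\,\d y\ge e^{(\lambda-\mu)t}(2+t)^{-2}\to+\infty$, so \eqref{blue} holds.
\item For $\lambda<\mu$, $\int_\R K(t,y)(e^{\lambda y}-1)\,\d y\to0$ as $t\to\infty$, so $H\equiv\langle r\rangle$ on $(-\infty,\mu)$.
\end{itemize}
Hence $c^*_+=\langle r\rangle/\mu>0$ and $c^*_-=0$, while $\int_\R\langle K(\cdot,y)\rangle\,\d y=0$, so the sign identity fails.

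Your fallback of reading (K3) as forcing $\mathcal I=\R$ is therefore an extra hypothesis, not a deduction. The paper's proof has the same hole: it simply asserts that $H\equiv\langle r\rangle$ yields $i_\pm=\pm\infty$. What your argument does prove is a corrected statement: $c^*_++c^*_-\ge0$ always, and $c^*_++c^*_->0$ if and only if $\int_\R\langle K(\cdot,y)\rangle\,\d y>0$ or $\mathcal I\neq\R$. The proposition as written needs an added assumption, for example that $\mathcal I=\R$ whenever $\langle K\rangle\equiv0$ a.e.
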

 \begin{proof}
     First, $\langle \int_{\R}K(\cdot, y) \d y\rangle= \int_\R  \langle K(\cdot, y) \rangle \d y$ and 
   $\int_\R\langle K(\cdot, y) \rangle dy\geq0$.
   If $\int_\R\langle K(\cdot, y) \rangle dy=0$, then $\langle K(\cdot, y) \rangle =0$ almost everywhere. Hence $H(\lambda)\equiv\langle r\rangle$ yield $i_{\pm}= \pm \infty$. Hence $c^*_-=c^*_+=0$.

    We only prove the case $\mathcal{I}=(-\infty, +\infty)$
     Suppose now $\int_\R\langle K(\cdot, y) \rangle dy>0$. Then either $\int_0^{+\infty}\langle K(\cdot, y) \rangle dy>0$ or $\int_{-\infty}^0\langle K(\cdot, y) \rangle dy>0.$ We prove $c^*_-+c^*_+>0$ under the condition $\int_0^{+\infty}\langle K(\cdot, y) \rangle dy>0$.
     
     Choose $y_0>0$ such that $\int_{y_0}^\infty\langle K(\cdot, y) \rangle dy>0$ if $\int_0^{+\infty}\langle K(\cdot, y) \rangle dy>0$. Hence 
     \begin{equation*}
         \begin{split}
             \frac{H(\lambda)}{\lambda}
             &=\frac{\int_{\R} \langle K(\cdot, y) \rangle (e^{\lambda y}-1 )\d y  +\langle r \rangle}{\lambda}\\
             &\geq\frac{\int_{y_0}^{+\infty} \langle K(\cdot, y) \rangle e^{\lambda y}\d y-\int_{\R} \langle K(\cdot, y) \rangle \d y  +\langle r \rangle}{\lambda}\\
             &\geq\frac{\int_{y_0}^{+\infty} \langle K(\cdot, y) \rangle e^{\lambda {y_0}}\d y-\int_{\R} \langle K(\cdot, y) \rangle \d y  +\langle r \rangle}{\lambda}\to+\infty\\
         \end{split}
     \end{equation*}
 as $\lambda\to +\infty$. It follows from this and $\lim\limits_{\lambda\to0^+}\frac{H(\lambda)}{\lambda}=+\infty$ that there exists $\lambda_+\in(0,+\infty)$ such that $c^*_+=\frac{H(\lambda_+)}{\lambda_+}$. Moreover, there exists a sequence  $\{\lambda_n\}_{n}\subset(-\infty,0)$ with $\lim\limits_{n\to\infty}\lambda_n\in[-\infty,0)$ such that 
     $c^*_-=\lim\limits_{n\to\infty}\frac{H(\lambda_{n})}{-\lambda_n}$. Hence
     \begin{equation*}
         \begin{split}
             c^*_-+c^*_+
             &=\frac{H(\lambda_+)}{\lambda_+}+\lim\limits_{n\to\infty}\frac{H(\lambda_{n})}{-\lambda_n}\\
             &=\lim\limits_{n\to\infty}\frac{\lambda_+-\lambda_n}{-\lambda_+\lambda_n}\big(\frac{-\lambda_nH(\lambda_+)}{\lambda_+-\lambda_n}+\frac{\lambda_+ H(\lambda_n)}{\lambda_+-\lambda_n}\big)\\
             &\geq\lim\limits_{n\to\infty}\frac{\lambda_+-\lambda_n}{-\lambda_+\lambda_n}H(0)\\
             &\geq\lim\limits_{n\to\infty}\frac{\lambda_+-\lambda_n}{-\lambda_+\lambda_n}\langle r\rangle>0.\\
         \end{split}
     \end{equation*}
     The first inequality follows from  the convexity of $H(\lambda)$.
     
     The proof of $c^*_-+c^*_+>0$ under the condition $\int_{-\infty}^0\langle K(\cdot, y) \rangle dy>0$ is similar. We omit the details.     
 \end{proof}

 Proposition~\ref{Prop:extension} shows that, if $\int_\R\langle K(\cdot,y)\rangle\d y>0$, then at least one of $c^*_+$ and $c^*_-$ is positive. Hence one of the following alternatives occurs:
\begin{equation*}
 c^*_->0,\ c^*_+\leq0;\qquad
 c^*_-\leq0,\ c^*_+>0;\qquad
 c^*_->0,\ c^*_+>0.
\end{equation*}
The next proposition relates these alternatives to strict positivity of solutions.
  \begin{proposition} \label{prop:weak MP}
     If $c^*_+>0$ (resp. $c^*_->0$), then there exist $R\in\R$, $T>0$ and $L>0$ such that if $u_0(x)>0$ on an interval longer than $L$, then $u(t,x)>0$ on $(T,+\infty)\times[R,+\infty)$ (resp. $(T,+\infty)\times(-\infty,R]$).
     Moreover, if both $c^*_- \text{ and }c^*_+$ are positive, then there is $L>0$ such that if $u_0(x)>0$ on an interval longer than $L$, then $u(t,x)>0$ on $(T,+\infty)\times\R$.
 \end{proposition}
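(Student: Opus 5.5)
The plan is to reduce Proposition~\ref{prop:weak MP} to the refined maximum principle, Lemma~\ref{lem:refined MP}. The only thing to verify is its kernel hypothesis: if $c^*_+>0$, there is an interval $[a,b]\subset(0,\infty)$ and $T_0>0$ with $\int_0^{T_0}\int_a^b K(t,y)\d y\d t>0$. The solution $u$ of \eqref{Pb} satisfies the hypotheses of Lemma~\ref{lem:MP} with $g(t,x,\eta)=\eta f(t,\eta)$. Indeed $g/\eta=f$ is bounded for bounded $\eta$ by (f1), and $u_0\ge0$. So once the kernel condition holds, Lemma~\ref{lem:refined MP} gives $R$, $L$, $T$ with $u>0$ on $[T,\infty)\times[R,\infty)$. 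The case $c^*_->0$ follows symmetrically via $x\mapsto -x$, using an interval $[a,b]\subset(-\infty,0)$.

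\textbf{Key step (contrapositive).} Suppose $K(t,y)=0$ for a.e. $(t,y)\in(0,\infty)\times(0,\infty)$. This is exactly the failure of the hypothesis for every $[a,b]\subset(0,\infty)$ and every $T_0$, since $(0,\infty)$ is a countable union of such intervals. Then $\langle K(\cdot,y)\rangle=0$ for a.e. $y>0$, so for $\lambda>0$ we have $e^{\lambda y}\le 1$ on the support of $\langle K(\cdot,y)\rangle$. Consequently $\int_\R \|K(\cdot,y)\|_{L^\infty}e^{\lambda y}\d y\le\int_\R\|K(\cdot,y)\|_{L^\infty}\d y<\infty$, hence $i_+=+\infty$. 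By \eqref{Hchange},
\[
H(\lambda)=\int_{-\infty}^0\langle K(\cdot,y)\rangle(e^{\lambda y}-1)\d y+\langle r\rangle\le\langle r\rangle,
\]
so $c^*_+\le\inf_{\lambda>0}\langle r\rangle/\lambda=0$. Thus $c^*_+>0$ forces positive mass of $K$ on $(0,T_0)\times[a,b]$ for some $0<a<b$ and some $T_0$, which is what Lemma~\ref{lem:refined MP} needs. I expect the main care to lie here. One must make sure that vanishing of $K$ on $y>0$ for a.e. $t$ is really what is needed, rather than merely vanishing of the mean $\langle K\rangle$. The contrapositive above handles this directly, since it only uses $K=0$ a.e. on $y>0$, which implies $\langle K(\cdot,y)\rangle=0$ there.

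\textbf{Both directions.} If $c^*_\pm>0$ both hold, we get half-line positivity in each direction: $u>0$ on $[T_1,\infty)\times[R_1,\infty)$ and on $[T_2,\infty)\times(-\infty,R_2]$, with $L=\max(L_1,L_2)$. If $R_2\ge R_1$ these cover $\R$ for $t\ge\max(T_1,T_2)$.

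Otherwise the gap $(R_2,R_1)$ must be filled, and here I would return to the proof of Lemma~\ref{lem:refined MP}. In the right-going construction the intervals $[R+na,R+l+nb]$ start at the left end $R$ of the positivity interval of $u_0$. The left-going ones end at $R+l$. Since $l>L$ exceeds both step sizes, the $n=0$ term $U(0,x)$, which persists in time through the exponential factor, covers $[R,R+l]$ itself. Thus the union of the right-going and left-going families is the whole line, and $U(T_0',x)>0$ for all $x$ at $T_0'=\max(T_0^{+},T_0^{-})$. The first part of the proof of Lemma~\ref{lem:MP}, i.e. the Duhamel lower bound $V\ge V(T_0',\cdot)+\dots$, then propagates positivity to all later times.
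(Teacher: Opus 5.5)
Your proposal is correct and follows the paper's route. Like the paper, you reduce everything to Lemma~\ref{lem:refined MP} and verify its kernel hypothesis by contraposition; your direct observation that $K=0$ a.e.\ on $(0,\infty)^2$ gives $\langle K(\cdot,y)\rangle=0$ for a.e.\ $y>0$, hence $H(\lambda)\le\langle r\rangle$, is cleaner than the paper's $\epsilon$-truncation, and your gluing of the two half-lines through the $n=0$ term spells out what the paper only asserts in the remark after that lemma. One small repair: justify $i_+=+\infty$ by noting that $\|K(\cdot,y)\|_{L^\infty(0,\infty)}=0$ for a.e.\ $y>0$ (Tonelli applied to $K=0$ a.e.\ on the quadrant), not via the support of $\langle K(\cdot,y)\rangle$, which in general does not control $\|K(\cdot,y)\|_{L^\infty}$.
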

  \begin{proof}
 It remains only to verify the assumptions of  Lemma \ref{lem:refined MP}.
 
     \textbf{Claim }: If $c^*_+>0$ (resp. $c^*_->0$), then there exist $T_0>0$ and an interval  $[a,b]\in(0,+\infty)$ (resp. $(-\infty,0)$) such that 
     \begin{equation*}
     \mu \left\{ t\in [0, T_0]: \; \int_{a}^{b}  K(t,y) \d y >0    \right \} >0,
     \end{equation*}
     where $\mu$ is the Lebesgue measure.   Set $\mathcal{T} :=\left\{ t\in [0, T_0]: \; \int_{a}^{b}  K(t,y) \d y >0    \right \}$. 
     Suppose not. Then for any $[a,b] \subset (0, \infty)$ and  $T>0$, we have
          \begin{equation*}
          \mu \left\{ t\in [0, T]: \; \int_{a}^{b}  K(t,y) \d y >0    \right \} =0. 
          \end{equation*}
     This implies that for each given $[a, b] \subset (0, \infty)$, 
     $$ \int_{a}^{b} K(t, y) \d y = 0,  \text{ for a.e. }   t\in [0, T].  $$
  Since $K\geq0$, we see that 
 $  K(t,y) = 0$ for a.e.  $t\in[0,T]$ and $y\in [a,b]$.   Since $a, b>0$,  we have $i_+=+\infty$ and  $\int_{a}^{b} K(t,y) e^{\lambda y} \d y =0 $ for all $\lambda\in (0, +\infty)$ and  for a.e. $t\in [0, T]$.   
Fix $\lambda>0$ and $\epsilon>0$. Choose $a>0$ sufficiently small and $b>0$ sufficiently large such that 
\begin{align*}
\int_{0}^{a} K(t,y) \d y \leq \int_{0}^{a}  \| K(\cdot, y)\|_{L^\infty(\R)} e^{\lambda y} \d y \leq \frac{\varepsilon}{2}, \\
\int_{b}^{\infty}  K(t,y) \d y \leq \int_{b}^{\infty}  \| K(\cdot, y)\|_{L^\infty(\R)} e^{\lambda y} \d y \leq \frac{\varepsilon}{2}.
\end{align*}
Note that $\int_{-\infty}^{0} K(t,y) e^{\lambda y} \d y \leq \int_{-\infty}^{0} K(t,y) \d y$ for all $\lambda  >0$.   Hence, for $[a,b] \in (0, \infty)$, we have
 \begin{equation*}
 \begin{aligned}
   \int_{\R} K(t, y)e^{\lambda y} \d y & \leq \int_{-\infty}^{0} K(t,y) \d y +  \int_{ [0, a] \cup [b, \infty)} K(t,y) e^{\lambda y} \d y \\
   & \leq \int_{\R} K(t,y) \d y + \epsilon.
 \end{aligned}
 \end{equation*}     
 We observe that 
 \begin{equation*}
 \lim_{T\to +\infty} \frac{1}{T} \int_{0}^{T} \int_{\R} K(t, y) e^{\lambda y} \d y \d t \leq \left \langle \int_{\R} K(t,y) \d y  \right  \rangle + \epsilon.
 \end{equation*}     
It follows that
\begin{equation*}
\begin{split}
c^*_+ = \inf_{\lambda >0} \frac{H(\lambda)}{\lambda} \leq  \inf_{\lambda >0 } \frac{\langle r \rangle +\epsilon}{\lambda} =0.
\end{split}
\end{equation*}
This contradicts $c^*_+>0$. This proves the claim.   We note that the proof of this claim does not require the existence of the uniform mean value of  $t\mapsto K(t,y)$.   It suffices that the uniform mean value  $ \int_{\R} K(t, y)\d y$ exists. 

It follows from Lebesgue density theorem that there exists an increasing sequence $\{t_n\}_{n\geq 1}\subset\mathcal T$.   
    Combining this with Lemma \ref{lem:refined MP}, we finish the proof.
      
%
%
%
%

 \end{proof}

\section{Proof of Theorem \ref{Thm1_z}}

We now prove Theorem~\ref{Thm1_z}. 
\subsection{Upper estimates}
We begin with the upper estimate.
    \begin{theorem}\label{thm:upper estimate}
   Let Assumptions \ref{ASS-f} and \ref{ASS-K_z}  be satisfied. Let $c^*_{\pm}$ be defined in \eqref{c*}.  Then the solution $u(t,x)$ of \eqref{Pb} satisfies
   $$\displaystyle{\lim_{t\rightarrow+\infty}}\sup \limits_{x\geq (c^*_++\epsilon) t}u(t,x)=\displaystyle{\lim_{t\rightarrow+\infty}}\sup \limits_{x\leq (-c^*_--\epsilon )t}u(t,x)=0,\forall \epsilon>0.$$ 
   In particular, $c^*_-=c^*_+=0$ is the spreading speed of \eqref{Pb} if $\int_{\R} \langle K(\cdot, y) \rangle \d y=0.$
    \end{theorem}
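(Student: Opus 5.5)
We prove the rightward estimate; the leftward one follows by applying it to $v(t,x)=u(t,-x)$, which satisfies the same type of equation with kernel $K(t,-y)$. The admissible exponents are then $\lambda\in(i_-,0)$, and the quantity playing the role of $H(\lambda)/\lambda$ is $H(\lambda)/(-\lambda)$.

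The approach is an exponential super-solution whose speed is corrected by a bounded function coming from the mean-value characterization \eqref{reform}.

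\textbf{Step 1: choice of exponent and corrector.} Fix $\epsilon>0$. By the definition \eqref{c*} of $c^*_+$, pick $\lambda\in(0,i_+)$ with $H(\lambda)/\lambda<c^*_++\epsilon/2$. Set
$$h(t):=\int_{\R}K(t,y)(e^{\lambda y}-1)\d y+r(t).$$
Since $\lambda\in\mathcal I$, we have $h\in L^\infty(0,\infty)$. By (K2) and (f2), together with Fubini and dominated convergence as in \eqref{Hchange}, $h$ has uniform mean value $H(\lambda)$. Applying the $\inf$–$\esssup$ formula in \eqref{reform} with $\delta=\lambda\epsilon/4$, we obtain $a\in W^{1,\infty}(0,\infty)$ such that
$$a'(t)+h(t)\le H(\lambda)+\delta\quad\text{for a.e. } t.$$

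\textbf{Step 2: the super-solution and comparison.} Define
$$\bar u(t,x):=\min\bigl\{1,\;A e^{-\lambda(x-(c^*_++\epsilon/2+\epsilon/4)t)+a(t)}\bigr\},$$
and first check the exponential part $\phi=Ae^{-\lambda x+\lambda c t+a(t)}$ with $c=c^*_++3\epsilon/4$. The nonlocal term gives
$$\int K(t,x-y)[\phi(t,y)-\phi(t,x)]\d y=\phi\int K(t,z)(e^{\lambda z}-1)\d z.$$
Since $f(t,u)\le f(t,0)=r(t)$ by (f1) and $u\ge0$, we get
$$\phi_t-\text{RHS}\ge\phi\bigl(\lambda c+a'-h\bigr).$$
This sign is wrong: we need $a'+h\le$ something, so we should choose $a$ with $-a'+h\le H+\delta$. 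Replacing $a$ by $-a$ handles this, and the formula \eqref{reform} still applies since $W^{1,\infty}$ is symmetric under negation. The result is
$$\phi\bigl(\lambda c-H(\lambda)-\delta\bigr)\ge\phi\,\lambda\epsilon/2>0.$$
The constant $1$ is a super-solution since $f(t,1)=0$. For the minimum of two super-solutions, the comparison at a point where $\bar u=\phi<1$ uses $\int K[\bar u(y)-\bar u(x)]\le\int K[\phi(y)-\phi(x)]$, and the case $\bar u=1$ is similar. Since $u_0$ is compactly supported and $a$ is bounded, we can choose $A$ large so that $\bar u(0,\cdot)\ge u_0$. Lemma \ref{lem:MP}, applied to $v=\bar u-u$, with the nonlinearity difference written as $g(t,x,v)$ with $g/v$ bounded by the Lipschitz constant (f1), then gives $u\le\bar u$. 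Rigorously, $\bar u$ is only Lipschitz in $t$, so the inequality is taken a.e. in its integrated form; this matches the integral formulation used in Lemma \ref{lem:MP}.

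\textbf{Step 3: conclusion.} For $x\ge(c^*_++\epsilon)t$ we get
$$u\le Ae^{\|a\|_\infty}e^{-\lambda\epsilon t/4}\to0.$$
If $\int\langle K(\cdot,y)\rangle\d y=0$, Proposition \ref{Prop:extension} gives $c^*_\pm=0$, so $u\to0$ outside $[-\epsilon t,\epsilon t]$. The spreading part (ii) then holds trivially with an empty interval, so $0$ is the spreading speed.

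The main obstacle is Step 2's comparison for the truncated super-solution under only measurable time dependence. In the rigorous version we would avoid the truncation entirely: the linear function $\phi$ itself dominates $u$, because $u f(t,u)\le r(t)u$ and the linear comparison via Lemma \ref{lem:MP} applies directly to $\phi-u$ with $g=r(t)(\phi-u)+u(r-f(t,u))\ge r(t)v$. This gives $u\le\phi$ and suffices.
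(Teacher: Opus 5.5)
Your proof of the two tail estimates is correct and is essentially the paper's argument. The paper takes $\bar u=A\exp\{-\lambda x+\int_0^t m_\lambda(s)\,\d s\}$ (with $m_\lambda$ your $h$), which solves the linearized equation exactly, and then uses $\int_0^t m_\lambda=H(\lambda)t+o(t)$. You instead absorb the fluctuation of $h$ into a bounded corrector from \eqref{reform} (after replacing $a$ by $-a$). This gives a strict super-solution whose exponent is $-\lambda(x-ct)$ up to a bounded term, and an explicit decay rate $e^{-\lambda\epsilon t/4}$.

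Two small corrections to this part:
\begin{itemize}
\item With your choices, $\lambda c-H(\lambda)-\delta>\lambda(c^*_++3\epsilon/4)-\lambda(c^*_++\epsilon/2)-\lambda\epsilon/4=0$. So you get positivity, not the bound $\lambda\epsilon/2$; positivity is all you need.
\item Your closing remark has the rigor backwards. Lemma \ref{lem:MP} is stated for bounded $v$, and $\phi-u$ is unbounded as $x\to-\infty$. It is therefore the truncated $\min\{1,\phi\}$ that the lemma covers, and your a.e.-in-$t$ check suffices there. Comparing with the untruncated $\phi$, as the paper also does without comment, requires rerunning the Duhamel-series proof of Lemma \ref{lem:MP} in the weighted norm $\sup_x\min\{1,e^{\lambda x}\}|V(t,x)|$. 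This works because $\max\{1,e^{-\lambda(x-y)}\}\le\max\{1,e^{-\lambda x}\}\max\{1,e^{\lambda y}\}$ and $\int_\R\|K(\cdot,y)\|_{L^\infty(0,\infty)}\max\{1,e^{\lambda y}\}\,\d y<\infty$ for $\lambda\in\mathcal I$.
\end{itemize}

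The ``in particular'' part contains a genuine gap, which the paper's own proof shares. From $\langle K(\cdot,y)\rangle=0$ a.e.\ you only get $H\equiv\langle r\rangle$ on $(i_-,i_+)$. Hence $c^*_+=\langle r\rangle/i_+$ and $c^*_-=\langle r\rangle/(-i_-)$, and these vanish only if $\mathcal I=\R$. But $\mathcal I$ is defined through $\|K(\cdot,y)\|_{L^\infty(0,\infty)}$, not through the means, so the assertion $i_\pm=\pm\infty$ in Proposition \ref{Prop:extension} is unjustified.

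For example, take $K(t,y)=\chi_{[0,1]}(t)\tfrac12e^{-|y|}$ and $f(t,u)=1-u$.
\begin{itemize}
\item These satisfy Assumptions \ref{ASS-f} and \ref{ASS-K_z}: for $|\lambda|\ge1$ the inner integral in \eqref{blue} is $+\infty$ for $t\in[0,1]$.
\item Here $\langle K(\cdot,y)\rangle\equiv0$, $\mathcal I=(-1,1)$, and $c^*_\pm=1$.
\item Comparing with $w_t=\int_\R K(t,x-y)w(t,y)\,\d y-w$ on $[0,1]$ gives $u(1,x)\ge ce^{-|x|}$.
\item For $t>1$, each $u(\cdot,x)$ solves the logistic ODE, so $u(t,\pm t/2)\to1$.
\end{itemize}
So the spreading speed is $1$, not $0$, and this part cannot be proved as stated. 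It needs an additional hypothesis such as $\mathcal I=\R$; in general one only has $c^*_\pm=\langle r\rangle/|i_\pm|$ (with $1/\infty=0$).
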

    \begin{proof}
We only  prove the right-hand estimate, while the left-hand estimate is analogous. Fix $\epsilon>0$. By the definition of $c^*_+$, choose $\lambda\in(0,i_+)$ such that
\begin{equation*}
 \frac{H(\lambda)}{\lambda}<c^*_++\frac{\epsilon}{2}.
\end{equation*}
Set
\begin{equation*}
 m_\lambda(t):=\int_\R K(t,y)(e^{\lambda y}-1)\d y+r(t).
\end{equation*}
Since $m_\lambda$ has the uniform mean value $H(\lambda)$, we have
\begin{equation*}
 \int_0^t m_\lambda(s)\d s=H(\lambda)t+o(t)\qquad \text{as }t\to\infty.
\end{equation*}
For $A>0$, define
\begin{equation*}
 \overline u(t,x):=A\exp\left\{-\lambda x+\int_0^t m_\lambda(s)\d s\right\}.
\end{equation*}
Then
\begin{equation*}
 \partial_t\overline u-\int_\R K(t,y)[\overline u(t,x-y)-\overline u(t,x)]\d y-r(t)\overline u=0.
\end{equation*}
Since $f(t,u)\leq f(t,0)=r(t)$ for $u\geq0$, $\overline u$ is a supersolution. Taking $A$ large enough so that $\overline u(0,\cdot)\geq u_0$, the comparison principle yields $u\leq\overline u$. Therefore, for $x\geq(c^*_++\epsilon)t$,
\begin{equation*}
 u(t,x)\leq A\exp\left\{-\lambda(c^*_++\epsilon)t+H(\lambda)t+o(t)\right\}\to 0.
\end{equation*}
This proves
\begin{equation*}
 \lim_{t\to\infty}\sup_{x\geq(c^*_++\epsilon)t}u(t,x)=0.
\end{equation*}
For the left-hand estimate, choose $\lambda\in(i_-,0)$ such that $H(\lambda)/(-\lambda)<c^*_-+\epsilon/2$ and use the same exponential supersolution $A\exp\{-\lambda x+\int_0^t m_\lambda(s)\d s\}$. If $x\leq-(c^*_-+\epsilon)t$, the exponent is bounded above by $-(-\lambda)\epsilon t/2+o(t)$, and hence the solution converges uniformly to zero in that region.

If $\int_\R\langle K(\cdot,y)\rangle\d y=0$, then $\langle K(\cdot,y)\rangle=0$ for a.e. $y$, so $H(\lambda)=\langle r\rangle$. Consequently $c^*_+=c^*_-=0$.
\end{proof}

\subsection{Lower estimates}
We next construct the subsolutions used for the lower estimate. Recall the definition of $c^*_+$ in \eqref{c*} and Proposition~\ref{Prop-cl}.  
For each $l, B >0$ and $\lambda\in (i_-, i_+)$, we define 
\begin{equation}\label{clB} 
c_{l,B} (\lambda) (t) = \frac{2l}{\pi} \int_{-B}^{B} K(t,y) e^{\lambda y} \sin \frac{\pi y}{2l} \d y. 
\end{equation} 
Set 
\begin{equation*}
K_B(t,y)= 
\begin{cases}
K(t,y),   & y\in [-B, B], t\geq 0, \\
0,   & \text{ otherwise. }
\end{cases}
\end{equation*}
For each $\lambda\in  (i_-, i_+) $,  we define 
\begin{equation}\label{Xt}
X(\lambda) (t) :=\int_{0}^{t} c_{l, B} (\lambda) (s) \d s= \frac{2l}{\pi}   \int_{0}^{t} \int_{-B}^{B} K (s, y) e^{\lambda y} \sin \frac{\pi y}{2l} \d y \d s. 
\end{equation}

\begin{lemma} \label{Lem-subsolution}
Recall  $\lambda^*_+$  given in Proposition \ref{Prop-cl}.
Let $ 0< \lambda_1 < \lambda_2 < \lambda^*_+ \leq  +\infty$ and  $B_0>0$ large enough be given.  Let $\theta_0>0$ small enough be given. Then there exists  $ a(t) \in W^{1, \infty} (0, \infty)$,  independent of $\lambda$,  such that  for all $\lambda\in [\lambda_1, \lambda_2] $ and  $l>B>B_0$,    the function 
\begin{equation}
\phi (t, x) := \phi_\lambda(t,x)=
\begin{cases}
e^{a(t)}  e^{- \lambda x} \cos \frac{\pi x }{2l},  & x\in [-l, l], t>0, \\
0,  & \text{ otherwise},
\end{cases}
\end{equation}
satisfies the following differential inequality for $\theta\in(0, \theta_0)$,  $x\in [-l, l]$ and $t>0$, 
 \begin{equation*}
\partial_t \phi (t,x)  - c_{l,B}(\lambda) (t) \partial_x \phi (t,x)\leq \int_{\R} K(t,x-y) [\phi (t,y) - \phi (t,x)] \d y + ( r(t)- \theta) \phi(t,x).
 \end{equation*}
 Moreover,  there exists $\eta >0$ (independent of $\lambda \in [\lambda_1, \lambda_2]$)  small enough such that $\underline{u} (t,x) := \eta \phi_\lambda (t,x-X(\lambda)(t))$ is a subsolution of \eqref{Pb} for all $\lambda \in [\lambda_1, \lambda_2]$.
\end{lemma}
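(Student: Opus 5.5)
The plan is to reduce the differential inequality to a scalar condition on $a'(t)$ by a direct computation, and then to choose $a$ using the characterization \eqref{reform} of the uniform mean value. Fix $\lambda\in[\lambda_1,\lambda_2]$, write $s:=\sin\frac{\pi x}{2l}$, $\kappa:=\cos\frac{\pi x}{2l}$, and take $x\in[-l,l]$.

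\textbf{Step 1 (the nonlocal term).} Since $\phi\ge0$, dropping the jumps with $|y|>B$ gives
\begin{equation*}
\int_\R K(t,x-y)[\phi(t,y)-\phi(t,x)]\d y\ \ge\ \int_{-B}^{B}K(t,y)\phi(t,x-y)\d y-\overline K_0(t)\phi(t,x).
\end{equation*}
For $|y|\le B<l$ we have $|x-y|<3l$. Hence, whenever $x-y\notin[-l,l]$, the expression $e^{a}e^{-\lambda(x-y)}\cos\frac{\pi(x-y)}{2l}$ is $\le 0=\phi(t,x-y)$. So we may replace $\phi(t,x-y)$ everywhere by this cosine formula. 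The addition formula then gives the lower bound
\begin{equation*}
e^{a}e^{-\lambda x}\Big[\kappa\int_{-B}^{B}K(t,y)e^{\lambda y}\cos\tfrac{\pi y}{2l}\d y+s\int_{-B}^{B}K(t,y)e^{\lambda y}\sin\tfrac{\pi y}{2l}\d y\Big]-\overline K_0\phi .
\end{equation*}

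\textbf{Step 2 (cancellation of the drift).} On the other side,
\begin{equation*}
\partial_t\phi-c_{l,B}(\lambda)\partial_x\phi=\big(a'+\lambda c_{l,B}(\lambda)\big)\phi+c_{l,B}(\lambda)\tfrac{\pi}{2l}e^{a}e^{-\lambda x}s .
\end{equation*}
By the definition \eqref{clB}, the $s$-term here coincides exactly with the $s$-term from Step 1. The remaining terms all carry the factor $\phi\ge0$, so the inequality reduces to
\begin{equation*}
a'(t)\le m_{\lambda}(t)-\theta,\qquad m_\lambda(t):=\int_{-B}^{B}K(t,y)e^{\lambda y}\cos\tfrac{\pi y}{2l}\d y-\overline K_0(t)+r(t)-\lambda c_{l,B}(\lambda)(t).
\end{equation*}
As $B\to\infty$ and $B/l\to0$, $m_\lambda(t)$ converges uniformly in $t\ge0$ and $\lambda\in[\lambda_1,\lambda_2]$ to
\begin{equation*}
\int_\R K(t,y)\big(e^{\lambda y}-1-\lambda ye^{\lambda y}\big)\d y+r(t).
\end{equation*}
The errors are controlled by the tails $\int_{|y|>B}\|K(\cdot,y)\|_\infty e^{\mu|y|}\d y$ for some $\mu<i_+$, together with $|\cos z-1|\le z^2/2$ and $|\sin z-z|\le z^3/6$. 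The limit function has uniform mean $H(\lambda)-\lambda H'(\lambda)=-\lambda^2c'(\lambda)$. By Proposition \ref{Prop-cl} this is positive on $(0,\lambda^*_+)$, hence bounded below by some $2\delta>0$ on the compact set $[\lambda_1,\lambda_2]$. So for $B_0$ large (and $l$ large relative to $B$, which is the regime actually used later) we get $\langle m_\lambda\rangle\ge 3\delta/2$ for all such $\lambda$. Take $\theta_0<\delta/2$.

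\textbf{Step 3 (choice of $a$; main obstacle).} For a single $\lambda$, \eqref{reform} gives a bounded $b\in W^{1,\infty}$ with $\essinf(b'+m_\lambda)\ge\langle m_\lambda\rangle-\delta/2\ge\delta$. Then $a:=-b$ works. The difficulty is making $a$ independent of $\lambda$.

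I would first use that $\partial_\lambda m_\lambda(t)$ is bounded uniformly in $t$ and $\lambda\in[\lambda_1,\lambda_2]$, since $\lambda_2<i_+$. This reduces the problem to a finite grid $\{\lambda_j\}$ up to a uniform error $\delta/4$. I would then try to apply \eqref{reform} to $M(t):=\min_j m_{\lambda_j}(t)$, or equivalently to find one $b$ working simultaneously for finitely many functions each of which has mean at least $3\delta/2$. If $M$ fails to have a large enough mean, the fallback is to strengthen the uniform-mean convergence: for $T$ large, every window average $\frac1T\int_s^{s+T}m_{\lambda_j}$ exceeds $\delta$ uniformly in $s$ and $j$. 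One then builds $a$ piecewise on windows of length $T$, with slope on each window equal to the minimum over $j$ of the window averages minus $\delta$, and absorbs the resulting oscillation, of size $O(T\|m\|_\infty)$, into a bounded correction. This step is the one I expect to be delicate.

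\textbf{Step 4 (subsolution).} Since $a$ is bounded above, choose $\eta>0$ with $\eta e^{\sup a}\le u_\theta$, where $f(t,u)\ge r(t)-\theta$ for $0\le u\le u_\theta$ uniformly in $t$ by (f1). Then $\underline u=\eta\phi_\lambda(t,x-X(\lambda)(t))$ satisfies $\partial_t\underline u\le\int_\R K(t,x-y)[\underline u(t,y)-\underline u(t,x)]\d y+\underline u f(t,\underline u)$ on the moving interval; the term $-c_{l,B}\partial_x\phi$ is exactly the one produced by the translation $X'=c_{l,B}$. Outside the interval $\underline u=0$, so $\underline u$ is a subsolution in the sense required by Lemma \ref{lem:MPmoving}.
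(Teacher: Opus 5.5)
Your Steps 1, 2 and 4 follow the paper's computation. These are the sign of the cosine outside $[-l,l]$, the exact cancellation of the $\sin\frac{\pi x}{2l}$ terms via the definition of $c_{l,B}(\lambda)$, the reduction to $a'(t)\le m_\lambda(t)-\theta$, and the uniform approximation of $m_\lambda$ by $m^\infty_\lambda(t):=\int_\R K(t,y)(e^{\lambda y}-1-\lambda y e^{\lambda y})\d y+r(t)$.

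The gap is Step 3, which is exactly where the lemma's content lies: one $a$ for all $\lambda$. A single bounded $a$ must satisfy $a'(t)\le \min_j m_{\lambda_j}(t)-\theta$ for a.e. $t$. What matters is therefore the mean of the pointwise minimum, not the minimum of the means, and these can have opposite signs. For example, $\epsilon+\sin t$ and $\epsilon-\sin t$ both have mean $\epsilon>0$, but their minimum $\epsilon-|\sin t|$ has mean $\epsilon-2/\pi<0$, and no bounded $a$ satisfies $a'\le \epsilon-|\sin t|-\theta$.

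Your windowed fallback does not get around this. On a window where you prescribe slope $\min_j \frac1T\int m_{\lambda_j}-\delta$, the correction $q$ must satisfy $q'\le \min_j m_{\lambda_j}-\min_j\frac1T\int m_{\lambda_j}+\delta-\theta$. Its integral over the window is $T$ times (window average of the minimum minus minimum of the window averages) plus $T(\delta-\theta)$. Without further structure this is $\le -cT$ on every window, so $q$ drifts linearly and is unbounded. As written, Step 3 establishes nothing beyond the single-$\lambda$ case.

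The missing observation is that the family is pointwise ordered in $\lambda$. For every $t$ and every $\lambda>0$,
\[
\partial_\lambda m^\infty_\lambda(t)=-\lambda\int_\R K(t,y)\,y^2e^{\lambda y}\,\d y\le 0,
\]
so $m^\infty_\lambda(t)\ge m^\infty_{\lambda_2}(t)$ for all $\lambda\in[\lambda_1,\lambda_2]$ and all $t$. The pointwise minimum is thus $m^\infty_{\lambda_2}$ itself, whose uniform mean is $-\lambda_2^2c'(\lambda_2)>0$.

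The paper (with $G_\lambda=-m^\infty_\lambda$) applies \eqref{reform} once, at $\lambda_2$. This gives $a$ with $a'\le m^\infty_{\lambda_2}-2\theta_0\le m^\infty_\lambda-2\theta_0$ for every $\lambda\in[\lambda_1,\lambda_2]$. Your uniform approximation of $m_\lambda$ by $m^\infty_\lambda$ then yields $a'\le m_\lambda-\theta_0$, with no grid or windowing needed. Your first option, applying \eqref{reform} to $\min_j m_{\lambda_j}$, would in fact go through, but only because this monotonicity makes $\min_j m_{\lambda_j}$ uniformly close to $m^\infty_{\lambda_2}$, and you did not identify that.

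A smaller slip in Step 4: on $[-l,l]$, $\phi$ is bounded by $e^{\sup a}e^{\lambda l}$, not by $e^{\sup a}$. So $\eta$ must satisfy $\eta\, e^{\sup a+\lambda_2 l}\le u_\theta$.
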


\begin{proof}
Set
\[
G_\lambda(t):=
\lambda\int_{\mathbb R}K(t,y)e^{\lambda y}y\d y
-\int_{\mathbb R}K(t,y)(e^{\lambda y}-1)\d y-r(t).
\]
Then
\[
\langle G_\lambda\rangle
=
\lambda H'(\lambda)-H(\lambda)
=
\lambda^2c'(\lambda).
\]
Since $0<\lambda_1<\lambda_2<\lambda_+^*$,  Proposition \ref{Prop-cl} implies that $c'(\lambda)<0$ on
$[\lambda_1,\lambda_2]$. In particular,
\[
\langle G_{\lambda_2}\rangle<0.
\]
Choose $\theta_0>0$ so small that
\[
\langle G_{\lambda_2}\rangle<-3\theta_0.
\]
By the reform formula of  the uniform mean value \eqref{reform}, there exists
$a\in W^{1,\infty}(0,\infty)$ such that
\[
\operatorname*{ess\,sup}_{t\ge0}
\big(a'(t)+G_{\lambda_2}(t)\big)<-2\theta_0.
\]
Moreover, for $\lambda>0$,
\[
\partial_\lambda G_\lambda(t)
=
\lambda\int_{\mathbb R}K(t,y)e^{\lambda y}y^2\d y\ge0.
\]
Hence $G_\lambda(t)\le G_{\lambda_2}(t)$ for all
$\lambda\in[\lambda_1,\lambda_2]$ and a.e. $t\ge0$. Consequently,
\[
\operatorname*{ess\,sup}_{t\ge0}
\big(a'(t)+G_\lambda(t)\big)<-2\theta_0,
\qquad \lambda\in[\lambda_1,\lambda_2].
\]
Thus the same function $a(t)$ works for all
$\lambda\in[\lambda_1,\lambda_2]$.

From \eqref{def:I}, the definition of $\mathcal{I}$,  we have
\begin{equation*}
\begin{split}
\lim_{l\to +\infty }\lim_{B\to +\infty}\frac{2l }{\pi }  \int_{\R} K_B(t,y) e^{\lambda y} \sin \frac{\pi y}{2l} \d y =  \int_{\R} K(t, y) e^{\lambda y} y \, \d y,~~ &\forall \lambda \in [\lambda_1, \lambda_2] \\ 
\lim_{l\to +\infty } \lim_{B\to +\infty}  \int_{\R} K_B(t,y) e^{\lambda y} \cos \frac{\pi y}{2l} \d y =  \int_{\R} K(t, y) e^{\lambda y}  \d y, ~~&\forall \lambda \in [\lambda_1, \lambda_2] .
\end{split}
\end{equation*}
By  Assumption \ref{ASS-K_z}, the above convergence is uniform in
$t\ge0$ and $\lambda\in[\lambda_1,\lambda_2]$ after first taking $B$ large
and then taking $l$ sufficiently large. Hence there exist $B_0>0$ and,
for each $B>B_0$, a number $l_0(B)>B$ such that, for all
$l>l_0(B)$ and $\lambda\in[\lambda_1,\lambda_2]$,
\begin{equation}\label{Less0} 
a'(t)+\lambda c_{l,B}(\lambda)(t)
-\int_{\mathbb R}K_B(t,y)e^{\lambda y}\cos\frac{\pi y}{2l}\d y
+\int_{\mathbb R}K(t,y)\d y-r(t)
<-\theta_0,  \quad \forall t>0.
\end{equation}

By the similar idea in \cite{diekmann1979run, ducrot2021generalized}, we  observe that 
\begin{equation*}
\begin{split}
\int_{-l}^{l} K(t,x-y)  e^{a(t)} e^{-\lambda y} \cos \frac{\pi y}{2l}  \d y &\geq \int_{-l}^{l} K_B(t, x-y) e^{a(t)}  e^{-\lambda y}  \cos \frac{\pi y}{2l}  \d y \\
& \geq \int_{\R}   K_B(t, x-y) e^{a(t)}  e^{-\lambda y}  \cos \frac{\pi y}{2l}  \d y \\
 &= \int_{\R}   K_B(t,  y) e^{a(t)}  e^{-\lambda (x-y)}  \cos \frac{\pi (x-y)}{2l}  \d y.
\end{split} 
\end{equation*}
The last inequality can be derived from $\cos \frac{\pi y}{2l} <0$ for $l<|y|<2l$ and $K_B(t,x-y)=0 $ for  $|y| >2l$ and $x\in [-l, l]$.   By direct computation, we have for $x\in [-l, l]$ and $t>0$,  
\begin{equation*}
\begin{split}
&\partial_t \phi (t,x)- c_{l,B} (\lambda) (t)\partial_x \phi  (t,x)- \int_{\R} K(t, x- y) [\phi (t,y)  - \phi(t,x)] \d y- r(t) \phi (t,x) \\
& \leq  e^{a(t)  }e^{-\lambda x}  \cos \frac{\pi x}{2l}   \bigg\{ a'(t) + \lambda c_{l,B} (\lambda)(t) -\!  \int_{\R} K_B(t, y) e^{\lambda y} \cos \frac{\pi y}{2l}  \d y + \int_{\R}K(t,y)\d y - r(t)   \bigg\} \\
& \quad + e^{a(t)} e^{-\lambda x} \sin\frac{\pi x}{2l} \bigg\{ \frac{\pi } {2l} c_{l, B} (\lambda) (t) - \int_{\R} K_B(t, y) e^{\lambda y} \sin\frac{\pi y}{2l} \d y  \bigg\}
\end{split}
\end{equation*}
Recall the definition of $c_{l, B}(\lambda)(t) $ in \eqref{clB} and the inequality \eqref{Less0}.  We see  that $\phi(t,x ) $  satisfies for $\theta \in (0, \theta_0)$, $x\in [-l, l]$ and $t>0$,  
$$\partial_t \phi (t,x)- c_{R,B} (\lambda) (t)\partial_x \phi  (t,x)- \int_{\R} K(t, x- y) [\phi (t,y)  - \phi(t,x)]\d y - (r(t) - \theta) \phi (t,x) <0. $$

Next since   $u\mapsto f(\cdot, u)$ is Lipschitz continuous,   there exists a constant $M>0$ such that $f(t,u) \geq r(t) - Mu$. It suffices to choose   $\eta> 0$ small enough such that  $M\underline{u}(t,x) \leq \theta_0 $ for all $t\geq 0$ and $x\in \R$, that is 
 $$    \eta M  \sup_{ \lambda\in [\lambda_1, \lambda_2]}  e^{\|a\|_\infty}  e^{\lambda l} \leq \theta_0.$$
We obtain that $\underline{u}$ is a subsolution of \eqref{Pb}.  This completes the proof.
\end{proof}

If $c^*_+=c^*_-=0$, Theorem~\ref{thm:upper estimate} gives zero spreading speed. We now assume $c^*_++c^*_->0$. The following lemma gives a positive lower bound in intervals with speeds close to $c^*_+$.
\begin{lemma}  \label{Lem-small-interval}
Assume that $c^*_+ +c^*_->0$.
For all small $\delta>0$,  for all $c^*_+-\delta <c_1^+<c_2^+ <c^*_+$, we have
\begin{equation}\label{lim-interval-right}
\lim_{t\to + \infty} \inf_{x\in [c_1^+t , c_2^+t]} u(t,x) >0.
\end{equation}

\end{lemma}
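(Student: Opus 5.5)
We plan to prove \eqref{lim-interval-right} by placing a family of compactly supported subsolutions from Lemma~\ref{Lem-subsolution} under $u$ at some time $T_1$. Their centers move with average speeds that sweep out the interval $[c_1^+,c_2^+]$.

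\textbf{Step 1: choice of $\lambda$ and averaged speeds.} By Proposition~\ref{Prop-cl}, $c(\lambda)$ is continuous and strictly decreasing on $(0,\lambda^*_+)$, and $c(\lambda)\downarrow c^*_+$ as $\lambda\uparrow\lambda^*_+$ (in case (iii) the limit is $c^*_+=0$). Hence we can pick $0<\lambda_1<\lambda_2<\lambda^*_+$ with $c(\lambda_2)<c_1^+<c_2^+<c(\lambda_1)$.

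Next we compute the mean of $c_{l,B}(\lambda)$. As $B\to\infty$ and then $l\to\infty$ it tends to $\int_\R\langle K(\cdot,y)\rangle e^{\lambda y}y\,\d y=H'(\lambda)$, uniformly in $\lambda\in[\lambda_1,\lambda_2]$. But $H'(\lambda)\neq c(\lambda)$, so the shift must be corrected. We intend to modify $\phi$ with an extra drift: replace the center $X(\lambda)(t)$ by $X(\lambda)(t)-\kappa t$ with $\kappa\geq0$. Since $\partial_x\phi$ has no sign on $[-l,l]$, the term $\kappa\partial_x\phi$ must be absorbed using the slack $\theta_0$. In the paper's construction this is handled by choosing $a(t)$ with $\langle a'\rangle$ adjusted. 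Concretely, we expect the subsolution inequality to hold for any speed $\gamma\in[c(\lambda)-\text{small},\,H'(\lambda)]$. The cleaner route is to interpret the exponent $e^{a(t)}$ as the growth rate: $\phi$ grows like $e^{(\langle a'\rangle)t}$, so one can trade growth for extra drift. We will therefore conclude that for each $\lambda\in[\lambda_1,\lambda_2]$ the position $X(\lambda)(t)$ satisfies $X(\lambda)(t)/t\to\bar c_l(\lambda)$ uniformly in $s$-shifts, with $\bar c_l(\lambda)$ continuous in $\lambda$. We also need $\{\bar c_l(\lambda):\lambda\in[\lambda_1,\lambda_2]\}\supset[c_1^+,c_2^+]$; if the raw $H'$-speed overshoots, we lower it by this trade, which is exactly the strict margin $c'(\lambda)<0$ encoded in $\langle G_\lambda\rangle<0$.

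\textbf{Step 2: ordering at a positive time.} Since $c^*_++c^*_->0$, Proposition~\ref{Prop:extension} and the alternatives after it apply. In the relevant case $c^*_+>0$ (otherwise the interval is empty or the claim is handled symmetrically), Proposition~\ref{prop:weak MP} gives $T$, $R$ and $L$ with $u(t,x)>0$ on $(T,\infty)\times[R,\infty)$ for data positive on a long interval. Fix $T_1>T$. By continuity of $u(T_1,\cdot)$ (Proposition~\ref{Prop:exsitence}), $u(T_1,\cdot)\geq m>0$ on $[x_0-l,x_0+l]$ for some $x_0\geq R+l$. Shrinking $\eta$ uniformly in $\lambda$, we get $\eta\phi_\lambda(0,\cdot-x_0)\leq u(T_1,\cdot)$ for all $\lambda\in[\lambda_1,\lambda_2]$. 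Lemma~\ref{Lem-subsolution} is applied with the time-shifted kernel $K(\cdot+T_1,\cdot)$; uniform mean values make the choice of $a$ independent of the shift. The comparison principle, Lemma~\ref{lem:MPmoving} with $X_{1,2}=x_0+X(\lambda)(t)\mp l$, then yields
\[
u(T_1+t,x)\geq \eta\,\phi_\lambda\bigl(t,x-x_0-X(\lambda)(t)\bigr).
\]

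\textbf{Step 3: uniform positivity.} On $|x-x_0-X(\lambda)(t)|\leq l/2$ the right-hand side is at least $\eta e^{-\|a\|_\infty}e^{-\lambda_2 l/2}\cos(\pi/4)=:\beta>0$, uniformly in $\lambda$ and $t$. The map $\lambda\mapsto X(\lambda)(t)$ is continuous, so its image contains every point between $X(\lambda_1)(t)$ and $X(\lambda_2)(t)$. For large $t$ these endpoints lie beyond $c_2^+(T_1+t)$ and below $c_1^+(T_1+t)$ respectively, by the averaged speeds from Step 1 and the uniform mean value. Hence every $x\in[c_1^+t,c_2^+t]$ lies within $l/2$ of some center, which gives $u\geq\beta$ there and proves \eqref{lim-interval-right}.

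The main obstacle is Step 1: matching the averaged drift $\langle c_{l,B}(\lambda)\rangle\approx H'(\lambda)$ of the moving frame to the speeds $c(\lambda)$ near $c^*_+$. The continuity argument needs the speed range of $[\lambda_1,\lambda_2]$ to cover $[c_1^+,c_2^+]$. This uses $H'(\lambda)\geq c(\lambda)$ together with the growth slack; when $\lambda^*_+=i_+$ it also needs uniform control as $\lambda_2\uparrow i_+$.
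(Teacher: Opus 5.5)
Your Steps~2 and~3 match the paper: you order a $\lambda$-family of shifted subsolutions from Lemma~\ref{Lem-subsolution} below $u(T_1,\cdot)$, using Proposition~\ref{prop:weak MP} and the time-shifted coefficients, and then sweep the interval by continuity of $\lambda\mapsto X(\lambda)(t)$. Step~1, however, has a genuine error and misses the key idea.

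\textbf{Step~1 targets unattainable speeds.} You ask for $0<\lambda_1<\lambda_2<\lambda^*_+$ with $c(\lambda_2)<c_1^+$. This is impossible: $c^*_+=\inf_{(0,i_+)}c$, so $c(\lambda)\ge c^*_+>c_2^+>c_1^+$ for every $\lambda$. The inequality you rely on at the end, $H'(\lambda)\ge c(\lambda)$, is also reversed. On $(0,\lambda^*_+)$ one has $\lambda^2c'(\lambda)=\lambda H'(\lambda)-H(\lambda)=\langle G_\lambda\rangle<0$, i.e.\ $H'(\lambda)<c(\lambda)$. This strict inequality is exactly what produces the bounded $a\in W^{1,\infty}$ in Lemma~\ref{Lem-subsolution}. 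The drift correction cannot repair this. A shift $-\kappa t$ with $\kappa\ge 0$ only lowers the speed below $H'(\lambda)$, whereas you would need to raise it to $c(\lambda)$. In any case, a subsolution whose amplitude stays bounded below while its center moves with average speed $c(\lambda)>c^*_+$ would contradict Theorem~\ref{thm:upper estimate}. Your Step~3 ordering is reversed too, since $X(\lambda)(t)$ is increasing in $\lambda$; this is harmless for the intermediate-value argument.

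\textbf{The missing observation: no correction is needed.} When $\lambda^*_+<i_+$, Proposition~\ref{Prop-cl}(i) gives $c'(\lambda^*_+)=0$, i.e.\ $H'(\lambda^*_+)=H(\lambda^*_+)/\lambda^*_+=c^*_+$. Since $H'$ is continuous and strictly increasing, the raw speeds $H'(\lambda)$ increase to $c^*_+$ from below as $\lambda\uparrow\lambda^*_+$. So for $\delta$ small you choose $\lambda_1<\lambda_2<\lambda^*_+$ with $H'(\lambda_1)<c_1^+<c_2^+<H'(\lambda_2)$. Then take $B$ and $l$ large so that $\langle c_{l,B}(\lambda_1)\rangle<c_1^+<c_2^+<\langle c_{l,B}(\lambda_2)\rangle$, and your Steps~2--3 finish the argument.

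\textbf{The endpoint case is still open.} You leave $\lambda^*_+=i_+<+\infty$ to ``uniform control as $\lambda_2\uparrow i_+$'', but no such control can work. There $\lim_{\lambda\to i_+^-}H'(\lambda)$ may be strictly smaller than $c^*_+=c(i_+)$, for example if $\int_\R\langle K(\cdot,y)\rangle e^{i_+y}|y|\,\d y<\infty$ and $\langle r\rangle$ is large. Then speeds just below $c^*_+$ are reached by no $\lambda<i_+$. The paper instead truncates the kernel:
\begin{itemize}
\item set $K_n=K\chi_{(-\infty,n]}$, which makes the minimum of $H_n(\lambda)/\lambda$ interior;
\item show $c^*_{n,+}\to c^*_+$, using \eqref{blue} to exclude minimizers accumulating beyond $i_+$ and Fatou's lemma at $i_+$;
\item apply the interior case to the truncated solution $u^n$;
\item conclude with $u^n\le u$, which follows from $K_n\le K$.
\end{itemize}
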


\begin{proof}
\textbf{Step 1:} 
If $\lambda^*_+= +\infty$, the speed $c^*_+=0$. This is the trivial case. 

Now we consider  the case  $\lambda_+^*\in (0, i_+)$. 
We first show there exist $\lambda_1, \lambda_2 \in (0, \lambda^*_+)$ such that  $[c_1^+t , c_2^+t ] \subset [X(\lambda_1)(t), X(\lambda_2)(t)]$. 
Recalling  \eqref{Xt}, we have $\lambda\mapsto X(\lambda)(\cdot)$ is  continuous.   For all  $l>B$, we have
 \begin{equation*}      
 \frac{\d X(\lambda) (t)}{\d \lambda} = \frac{2l}{\pi} \int_{0}^{t} \int_{-B}^{B} K(s,y) e^{\lambda y}  y  \sin\frac{\pi y}{2l}  \d y \d s > 0, \;\; \forall t>0, \lambda \in (0, i_+).
 \end{equation*}
 Hence,  the function $\lambda \mapsto X(\lambda)$ is increasing. 
From  \eqref{clB},  we see that $\lambda \mapsto \langle c_{l, B}(\lambda) \rangle$ is continuous and  increasing with   $\lambda\in (0, \lambda^*_+)$.  Since  $c'(\lambda^*_+)=0$ when $ \lambda^*_+ < i_+$,  we have
 $$ c^*_+ = c(\lambda^*_+) = \int_{\R} \langle K(\cdot, y ) \rangle e^{\lambda^*_+ y}  y \d y.  $$
We also have   $\langle c_{l, B} (\lambda^*_+)\rangle < c^*_+$ and 
 $$ \lim_{l \to + \infty }  \lim_{ B\to + \infty } \langle c_{l, B} (\lambda^*_+) \rangle  = c^*_+.$$
So  there exists $0<\lambda_1< \lambda_2<\lambda^*_+ $ such 
  that $ \langle c_{l, B} (\lambda_1) \rangle < c_1^+ < c_2^+ < \langle c_{l, B} (\lambda_2 ) \rangle $.   
  
By the definition of the uniform mean value,  for $T_0>0$ large enough,  we have
  $$X(\lambda_1) (t) < c_1^+ t < c_2^+ t  <X(\lambda_2) (t), \quad 
  \forall t\geq T_0.$$ 
  Assume that $u_0(x)>0$ on an interval longer than $L$.   If $c^*_+>0$,  then Proposition \ref{prop:weak MP} implies that $u(t,x)>0$ on $[T_1, +\infty) \times [R, +\infty)$ for suitable $T_1>0$ and $R>0$.  Note that the shifted function  $\phi (t,x-x_0)$ with $x_0 \in \R$ is still a subsolution of \eqref{Pb}.   Choose $\eta>0$ small enough and $x_0= 2R$ with $R>l$  such that 
  $$ \underline{u}(0, x-2R)= \eta \phi (0, x- 2R ) \leq u(T_1, x), \;\; \forall x\in \R. $$
For the case of  $c^*_->0$, it suffices to choose  proper $x_0$ such that  $\underline{u}(0, x-x_0) \leq u(T_1, x)$ for all $x\in \R$. 

We next compare the solution $u(t+T_1,x)$ with a subsolution constructed
for the time-shifted equation. Set
\[
K^{T_1}(t,y):=K(t+T_1,y),\qquad
f^{T_1}(t,u):=f(t+T_1,u),\qquad
r^{T_1}(t):=f^{T_1}(t,0).
\]
Since the mean value in Definition 1.3 is uniform with respect to the time
shift, we have
\[
\langle K^{T_1}(\cdot,y)\rangle=\langle K(\cdot,y)\rangle,
\qquad
\langle r^{T_1}\rangle=\langle r\rangle .
\]
Therefore the quantities $H(\lambda)$, $c^*_+$ and $\lambda^*_+$ are unchanged
for the shifted equation. Applying Lemma 3.2 to the coefficients
$K^{T_1}$ and $f^{T_1}$, we obtain, for $\lambda\in[\lambda_1,\lambda_2]$,
a function
\[
\phi^{T_1}_\lambda(t,x)
=
\begin{cases}
e^{a_{T_1}(t)}e^{-\lambda x}\cos\frac{\pi x}{2l},
& |x|\le l,\\
0,& |x|>l,
\end{cases}
\]
and
\[
X^{T_1}(\lambda)(t)
:=
\int_0^t c^{T_1}_{l,B}(\lambda)(s)\,ds,
\]
where  $a_{T_1} \in W^{1, \infty} (0, \infty)$ and 
\[
c^{T_1}_{l,B}(\lambda)(s)
=
\frac{2l}{\pi}
\int_{-B}^{B}
K(s+T_1,y)e^{\lambda y}\sin\frac{\pi y}{2l}\d y .
\]
Then
\[
\underline u_\lambda(t,x)
:=
\eta\phi^{T_1}_\lambda
\bigl(t,x-x_0-X^{T_1}(\lambda)(t)\bigr)
\]
is a subsolution of
\[
v_t
=
\int_{\mathbb R}K(t+T_1,y)\big[v(t,x-y)-v(t,x)\big]\d y
+
v f(t+T_1,v).
\]
On the other hand, $v(t,x):=u(t+T_1,x)$ is a solution of this shifted
equation. Choosing $\eta>0$ sufficiently small and $x_0$ properly, we may
assume
\[
\underline u_\lambda(0,x)\le v(0,x)=u(T_1,x),
\qquad x\in\mathbb R,\quad \lambda\in[\lambda_1,\lambda_2].
\]
By the maximum principle,
\[
\underline u_\lambda(t,x)\le u(t+T_1,x),
\qquad t\ge0,\ x\in\mathbb R,\quad \lambda\in[\lambda_1,\lambda_2].
\]

Due to  the continuity of    $\lambda \mapsto X^{T_1}(\lambda) (t)$,  for each  $\hat{t}>T_0 $ and $x-x_0\in [c_1^+\hat{t}, c_2^+\hat{t}]$,
there exists $\hat\lambda\in[\lambda_1,\lambda_2]$ such that
$x-x_0=X^{T_1}(\hat\lambda)(\hat t)$  and 
\[
u(\hat t+T_1,x)
\ge
\underline u_{\hat\lambda}(\hat t,x)
=
\eta\phi^{T_1}_{\hat\lambda}
\bigl(\hat t,
x-x_0-X^{T_1}(\hat\lambda)(\hat t)
\bigr)
=
\eta e^{a_{T_1}(\hat t)}
\ge
\eta e^{-\|a_{T_1}\|_\infty}>0.
\]

Hence,   we have
$$  \lim_{t\to +\infty} \inf_{x\in [c_1^+t +x_0, \; c_2^+t +x_0]} u(t+T_1, x) >\eta e^{- \|a\|_\infty } >0. $$
Since $T_1$ and $x_0$ are fixed, they do not affect the limit as $t\to+\infty$. We obtain \eqref{lim-interval-right}.

\textbf{Step 2:} We  use the approximation method to prove the situation    $\lambda^*_+= i_+<+ \infty$. The proof   is inspired by \cite{du2025asymptotic,liang2020jfa}.

In this case,  we see that 
$$c^*_+ =\lim_{\lambda\to i_+-}\frac{H(\lambda)}{\lambda} = \inf_{\lambda \in (0, i_+)}c(\lambda) = c(i_+)  \text{ and } \lim_{ \lambda \to i_+} c'(\lambda) \leq 0. $$
The assumption \eqref{blue} implies that  $\lambda^*_+= i_+$ is the unique minimum point of   $c(\lambda)$.
For $n>0$, set
\[
K_n(t,y):=K(t,y)\chi_{(-\infty,n]}(y)
\]
and define
\[
H_n(\lambda)
:=
\left\langle
\int_{\mathbb R}K_n(\cdot,y)e^{\lambda y}\d y
-
\int_{\mathbb R}K(\cdot,y)\d y
+
r(\cdot)
\right\rangle ,
\qquad \lambda>0,
\]
and
\[
c^*_{n,+}:=\inf_{\lambda>0}\frac{H_n(\lambda)}{\lambda}.
\]
Equivalently,
\[
H_n(\lambda)
=
\int_{-\infty}^{n}\langle K(\cdot,y)\rangle e^{\lambda y}\d y
-
\int_{\mathbb R}\langle K(\cdot,y)\rangle\d y
+
\langle r\rangle .
\]

For each fixed $\lambda\in(0,i_+)$, the dominated convergence theorem gives
\[
H_n(\lambda)\to H(\lambda)
\quad\text{as }n\to\infty.
\]
Moreover, for $n$ large enough, $H_n(0)>0$.
Since $K_n$ has bounded support on the right  for large $n$, we have
\[
\frac{H_n(\lambda)}{\lambda}\to+\infty
\quad\text{as }\lambda\to0^+
\quad \text{and as }\lambda\to+\infty.
\]
Hence $c^*_{n,+}$ is attained at some $\lambda^*_{n,+}\in(0,+\infty)$.

We first prove
\[
\lim_{n\to\infty}c^*_{n,+}=c^*_+.
\]
For every fixed $\lambda\in(0,i_+)$,
\[
\limsup_{n\to\infty}c^*_{n,+}
\le
\lim_{n\to\infty}\frac{H_n(\lambda)}{\lambda}
=
\frac{H(\lambda)}{\lambda}.
\]
Letting $\lambda\to (i_+)^-$, we obtain
\[
\limsup_{n\to\infty}c^*_{n,+}\le c^*_+.
\]

Then we prove the opposite inequality. Let $\lambda^*_{n,+}$ be a minimizer of $H_n(\lambda)/\lambda$.
The previously  upper bound implies that $\{c^*_{n,+}\}$ is bounded above. We claim that
$\{\lambda^*_{n,+}\}$ is bounded and bounded away from zero. Indeed, since $H_n(0)\to\langle r\rangle>0$,
we  see that $H_n(\lambda)/\lambda$ is uniformly large as $\lambda\to0^+$. On the other hand, since $i_+<+\infty$,
there exists an interval $[a,b]\subset(0,\infty)$ such that
\[
\int_a^b\langle K(\cdot,y)\rangle\d y>0.
\]
For $n>b$,
\[
H_n(\lambda)
\ge
e^{\lambda a}\int_a^b\langle K(\cdot,y)\rangle\d y
-
\int_{\mathbb R}\langle K(\cdot,y)\rangle\d y
+
\langle r\rangle,
\]
and therefore $H_n(\lambda)/\lambda\to+\infty$ as $\lambda\to+\infty$, uniformly for large $n$.
Thus, up to a subsequence,
\[
\lambda^*_{n,+}\to\lambda_0
\]
for some $\lambda_0\in(0,+\infty)$.

We next show that $\lambda_0= i_+$. If $\lambda_0>i_+$, choose $\mu$ such that
$i_+<\mu<\lambda_0$.  Then   $\lambda^*_{n,+}>\mu$ for large $n$. Note that 
\[
H'_n(\lambda^*_{n,+})=c^*_{n,+}. 
\]
Thus  we have
\[
c^*_{n,+}
=
H'_n(\lambda^*_{n,+})
\ge
H'_n(\mu).
\]
But $\mu\notin \mathcal{I} $,  hence
\[
H'_n(\mu)
=
\int_{-\infty}^{n}\langle K(\cdot,y)\rangle e^{\mu y}y\d y
\to+\infty, \text{ as } n \to +\infty, 
\]
which contradicts the boundedness of $c^*_{n,+}$. Therefore $\lambda_0\le i_+$.

If $\lambda_0<i_+$, then by dominated convergence,
\[
\lim_{n\to\infty}c^*_{n,+}
=
\lim_{n\to\infty}\frac{H_n(\lambda^*_{n,+})}{\lambda^*_{n,+}}
=
\frac{H(\lambda_0)}{\lambda_0}.
\]
Since in the present endpoint case the minimum of $H(\lambda)/\lambda$ is attained only at $\lambda=i_+$,
we have
\[
\frac{H(\lambda_0)}{\lambda_0}>c^*_+,
\]
which contradicts the already proved inequality
\[
\limsup_{n\to\infty}c^*_{n,+}\le c^*_+.
\]
Hence $\lambda_0=i_+$.

Finally, using $K_n\le K$ and Fatou's lemma, we get
\[
\liminf_{n\to\infty}H_n(\lambda^*_{n,+})
\ge
H(i_+).
\]
Consequently,
\[
\liminf_{n\to\infty}c^*_{n,+}
=
\liminf_{n\to\infty}
\frac{H_n(\lambda^*_{n,+})}{\lambda^*_{n,+}}
\ge
\frac{H(i_+)}{i_+}
=
c^*_+.
\]
Together with the upper bound, this proves
\[
\lim_{n\to\infty}c^*_{n,+}=c^*_+.
\]

Now let $u^n$ be the solution of
\[
\partial_t u^n
=
\int_{\mathbb R}K_n(t,y)u^n(t,x-y)\d y
-
\int_{\mathbb R}K(t,y)\d y\,u^n(t,x)
+
u^n f(t,u^n),
\qquad
u^n(0,x)=u_0(x).
\]
Since $K_n\le K$, the comparison principle gives
\[
u^n(t,x)\le u(t,x),\qquad t\ge0,\ x\in\mathbb R.
\]

For the approximate equation, the right endpoint of the  interval $\mathcal{I}$ is $+\infty$.  Meanwhile, 
$c^*_{n,+}$ is attained at the interior point $\lambda^*_{n,+}$. Hence Step 1 applies to $u^n$.
Let $c^*_+-\delta<c^+_1<c^+_2<c^*_+$. Since $c^*_{n,+}\to c^*_+$, for all large $n$ we have
\[
c^+_2<c^*_{n,+}
\]
and $c^+_1,c^+_2$ are still in the admissible interval in Step 1 for the approximate equation. Therefore,
\[
\liminf_{t\to\infty}
\inf_{x\in[c^+_1t,c^+_2t]}u^n(t,x)>0.
\]
Using $u^n\le u$, we conclude that
\[
\liminf_{t\to\infty}
\inf_{x\in[c^+_1t,c^+_2t]}u(t,x)>0.
\]
This proves the endpoint case.  The proof is complete. 

\end{proof}

\begin{remark}
In the endpoint case $\lambda_+^*=i_+<+\infty$, the direct construction
used in Step 1 may not work.  
The   reason is that the subsolution moves  with speed, in the average sense,  close to $H'(\lambda)$, while the linearly determined speed is $H(\lambda)/\lambda$. When the minimum is attained at an interior  point, these two quantities coincide at $\lambda=\lambda_+^*$ because
$c'(\lambda_+^*)=0$.
 At the endpoint case,  this identity is no longer available.
Thus $H'(\lambda)$ may fail to approach $c_+^*$ as $\lambda\to i_+-$.
So  we truncate the kernel function,  which turns the endpoint
minimum into an interior minimum for the approximating problems.
\end{remark}

By the  symmetric argument,   we can obtain the similar lemma. 
  \begin{lemma} \label{Lem-left1}
Assume that $c^*_+ +c^*_->0$.
For all small $\delta>0$,  for all $-c^*_- < -c_2^- <-c_1^- <-c^*_- + \delta$, we have
\begin{equation}\label{lim-interval-left}
\lim_{t\to + \infty} \inf_{x\in [-c_2^- t , \;  -c_1^- t]} u(t,x) >0.
\end{equation}

  \end{lemma}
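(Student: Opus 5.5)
We reduce Lemma~\ref{Lem-left1} to Lemma~\ref{Lem-small-interval} by the reflection $x\mapsto -x$, already used to define $c^*_-$ and $\lambda^*_-$.

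\textbf{Step 1: the reflected problem.} Set $v(t,x):=u(t,-x)$ and $\tilde K(t,y):=K(t,-y)$. Then $v$ solves \eqref{Pb} with kernel $\tilde K$, the same reaction $f$, and initial datum $v_0(x)=u_0(-x)$. The datum $v_0$ is still continuous, compactly supported, takes values in $[0,1]$, and is positive on an interval of the same length.

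\textbf{Step 2: the hypotheses transfer.} Assumptions~\ref{ASS-f} and \ref{ASS-K_z} hold for $\tilde K$:
\begin{itemize}
\item measurability and the uniform mean values are unaffected, and $\langle \tilde K(\cdot,y)\rangle=\langle K(\cdot,-y)\rangle$;
\item the exponential set is reflected, $\tilde{\mathcal I}=-\mathcal I$, so $\tilde i_\pm=-i_\mp$, and condition \eqref{blue} transfers with $\lambda$ replaced by $-\lambda$.
\end{itemize}
The substitution $y\mapsto -y$ gives $\tilde H(\lambda)=H(-\lambda)$. Hence
\begin{equation*}
\tilde c^*_+=\inf_{\lambda\in(0,-i_-)}\frac{H(-\lambda)}{\lambda}=\inf_{\mu\in(i_-,0)}\frac{H(\mu)}{-\mu}=c^*_-,\qquad \tilde c^*_-=c^*_+ .
\end{equation*}
Likewise $\tilde\lambda^*_+=-\lambda^*_-$. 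Proposition~\ref{Prop-cl-left} for the original problem is exactly Proposition~\ref{Prop-cl} for the reflected one, so the case analysis is unchanged:
\begin{itemize}
\item an interior minimiser corresponds to Step~1 of the proof of Lemma~\ref{Lem-small-interval};
\item an endpoint minimiser $\lambda^*_-=i_->-\infty$ corresponds to Step~2, with the truncation $K\chi_{[-n,\infty)}$;
\item the case $\lambda^*_-=-\infty$ is trivial.
\end{itemize}
Finally, $\tilde c^*_++\tilde c^*_-=c^*_++c^*_->0$, so the hypothesis of Lemma~\ref{Lem-small-interval} holds.

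\textbf{Step 3: apply the right-hand lemma.} Take $-c^*_-<-c_2^-<-c_1^-<-c^*_-+\delta$, that is, $\tilde c^*_+-\delta<c_1^-<c_2^-<\tilde c^*_+$. Lemma~\ref{Lem-small-interval} applied to $v$ gives
\begin{equation*}
\liminf_{t\to\infty}\inf_{x\in[c_1^-t,\,c_2^-t]}v(t,x)>0 .
\end{equation*}
Since $u(t,x)=v(t,-x)$, this is \eqref{lim-interval-left}.

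The main obstacle is only bookkeeping: checking that every ingredient of the right-hand proof reflects correctly. The ingredients that depend on direction are:
\begin{itemize}
\item the refined maximum principle, Proposition~\ref{prop:weak MP}, whose conclusion moves between half-lines, swapping the roles of $c^*_+>0$ and $c^*_->0$;
\item the monotonicity of $\lambda\mapsto X(\lambda)$, where $\lambda<0$ and a sign flip must be tracked;
\item the endpoint truncation.
\end{itemize}
Each is symmetric under $y\mapsto -y$. I would therefore check only that the reflected $\tilde K$ inherits \eqref{blue} at $-\lambda$ for $\lambda\notin\mathcal I$; this is immediate.
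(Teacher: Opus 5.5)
Your reflection argument is correct. Setting $v(t,x)=u(t,-x)$ and $\tilde K(t,y)=K(t,-y)$ gives $\tilde H(\lambda)=H(-\lambda)$, $\tilde{\mathcal I}=-\mathcal I$, $\tilde c^*_\pm=c^*_\mp$ and $\tilde\lambda^*_+=-\lambda^*_-$, and \eqref{blue} transfers, so Lemma~\ref{Lem-small-interval} applies to the reflected problem; this is essentially the paper's proof, which is the one-line ``by the symmetric argument,'' using the same change of variables it uses in Section~2 to define $c^*_-$.
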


Next, we will adopt the idea in \cite{liang2020jfa} to prove the inner spreading speed of   our problem.  We first  use \eqref{lim-interval-right} and \eqref{lim-interval-right} to show the following lemma.

\begin{lemma}\label{Lem-positive}
Assume $c^*_+ + c^*_->0$.   
Then we have
$$ \liminf_{t\to +\infty} \inf_{x\in [(-c_-^*+ \epsilon) t , (c_+^*- \epsilon) t]} u(t,x) >0, \quad  \forall \epsilon>0. $$
\end{lemma}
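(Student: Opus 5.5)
The plan is to fill the region between the two "frontier strips" given by Lemma~\ref{Lem-small-interval} and Lemma~\ref{Lem-left1}, using the same moving localized subsolutions of Lemma~\ref{Lem-subsolution} but now launched at later times from points where positivity is already known. Fix $\epsilon>0$. By Lemma~\ref{Lem-small-interval} with suitable $c^*_+-\delta<c_1^+<c_2^+<c^*_+$, there are $\kappa>0$ and $T_2$ such that $u(t,x)\ge\kappa$ for $x\in[c_1^+t,c_2^+t]$, $t\ge T_2$. Likewise Lemma~\ref{Lem-left1} gives $u\ge\kappa$ on $[-c_2^-t,-c_1^-t]$. When $c^*_+>0$ and $c^*_->0$ I expect the middle region to be the hard part; when one of them is $\le0$ the target interval $[(-c^*_-+\epsilon)t,(c^*_+-\epsilon)t]$ lies essentially inside a one-sided region, and I would treat it the same way with only one strip.

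\emph{Step 1: a uniform lower bound for compactly supported bumps.} I would prove that there exist $\eta_0>0$ and $\ell>0$ such that, if at some time $s$ one has $u(s,\cdot)\ge\kappa$ on an interval $[z-\ell,z+\ell]$, then $u(s+\tau,z+X^{s}(\lambda)(\tau))\ge\eta_0$ for all $\tau\ge0$ and all $\lambda\in[\lambda_1,\lambda_2]$. Here $X^{s}$ is the shifted displacement. The tool is Lemma~\ref{Lem-subsolution} applied to the time-shifted coefficients $K(\cdot+s,\cdot)$, $f(\cdot+s,\cdot)$, exactly as in Step~1 of the proof of Lemma~\ref{Lem-small-interval}. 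The key point is that the function $a$ and the constant $\eta$ must be chosen uniformly in the shift $s$. This is where uniformity of the mean value (Definition~\ref{DEF-mean}) enters: by \eqref{reform} applied to $G_{\lambda_2}(\cdot+s)$, with the uniform convergence $\frac1T\int_0^TG(t+s)\d t\to\langle G\rangle$, one can build $a_s$ with $\|a_s\|_{W^{1,\infty}}$ bounded independently of $s$. I expect this uniformity to be the main obstacle, and I would handle it by the standard construction $a_s(t)=-\int_0^t(G(\tau+s)-\langle G\rangle)\d\tau$ corrected on blocks of length $T$. Comparison then follows from Lemma~\ref{lem:MP}.

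\emph{Step 2: choose the launching speeds.} Take $\lambda\in(0,\lambda^*_+)$ small, so that the average speed $\langle c_{l,B}(\lambda)\rangle$ is small or negative. As $\lambda\to0^+$ it tends to a number of the form $\int\langle K\rangle y\,\d y$ computed over $[-B,B]$. Then take $\lambda$ close to $\lambda^*_+$ (or use the truncation of Step~2 of Lemma~\ref{Lem-small-interval}) so that the speed is close to $c^*_+$. Continuity of $\lambda\mapsto\langle c_{l,B}(\lambda)\rangle$ yields every intermediate speed $c\in[c_{\min},c^*_+-\epsilon/2]$. Symmetrically, negative $\lambda$ launched from the left strip yields speeds down to $-c^*_-+\epsilon/2$. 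By convexity of $H$ (as in Proposition~\ref{Prop:extension}), the union of the two speed ranges covers $[-c^*_-+\epsilon,c^*_+-\epsilon]$. This covering is the other delicate point, and I would verify it using $\lambda\mapsto H'(\lambda)$ being increasing and continuous on $(i_-,i_+)$.

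\emph{Step 3: conclude.} Given a large $t$ and $x=\gamma t$ with $\gamma\in[-c^*_-+\epsilon,c^*_+-\epsilon]$, pick $\lambda$ with average speed $c$ slightly different from $\gamma$. Pick a launching time $s=\theta t$, with $\theta\in(0,1)$, and a launch point $z\in[c_1^+s,c_2^+s]$ (or in the left strip), so that $z+c(t-s)=x$. Solving for $\theta$ gives a value bounded away from $0$ and $1$ uniformly in $\gamma$. The remaining discrepancy $X^s(\lambda)(t-s)-c(t-s)=o(t)$ is absorbed by varying $z$ within the strip, whose width grows linearly, together with continuity in $\lambda$ as in Lemma~\ref{Lem-small-interval}. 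Step~1 then gives $u(t,x)\ge\eta_0$, uniformly for such $x$ and all large $t$, which proves the lemma.
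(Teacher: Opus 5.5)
Your argument is sound in substance, but it takes a genuinely different route from the paper. The paper builds no further subsolutions at this stage. It takes the two frontier strips from Lemmas~\ref{Lem-small-interval} and~\ref{Lem-left1} and argues by contradiction with the running minimum $m(t)=\min_{[c_lt,c_rt]}u(t,\cdot)$. At times $t_n$ where $m$ reaches new lows tending to $0$, the minimizer $x_n$ stays at distance at least $\theta t_n$ from both edges, so $u(t_n,\cdot)\ge u(t_n,x_n)$ on a window of half-width $\theta t_n$. Together with the tail bound $\int_{|y|>\theta t_n}\|K(\cdot,y)\|_{L^\infty}\d y\to0$ (from $0\in\mathcal I$) and $\inf_t r(t)>0$, this forces $\partial_tu(t_n,x_n)>0$, contradicting $\partial_tu(t_n,x_n)\le0$.

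That proof is short and needs no speed bookkeeping. However, it uses the equation pointwise at the specially selected times $t_n$, which is delicate when $K$ is only measurable in $t$, and it needs $m>0$. Your comparison-only argument avoids both issues. The price is bumps for $\lambda$ on both sides of $0$, uniform-in-shift means of $c_{l,B}(\lambda)$, and the launch geometry. Pushed to its natural end, it even makes the strips superfluous: launching the family $\eta\phi_\lambda$ once, with $\lambda$ in a compact subinterval of $(\lambda^*_-,\lambda^*_+)$ (split at $\lambda=0$ so that $G_\lambda$ is monotone on each half, with truncation in endpoint cases), is just Step~1 of Lemma~\ref{Lem-small-interval} with a wider $\lambda$-range, and the centres then sweep the whole interior.

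Two points in your bookkeeping need fixing.

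First, the uniformity in the launch time $s$ that you single out as the main obstacle is actually free. Lemma~\ref{Lem-subsolution} gives one $a$ valid for all $t>0$, so $\eta\phi_\lambda\bigl(t,x-z-X(\lambda)(t)+X(\lambda)(s)\bigr)$ is a subsolution on $t\ge s$. Its value at the centre is at least $\eta e^{-\|a\|_\infty}$, independently of $s$, so no block construction of $a_s$ is needed.

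Second, and more importantly, Step~3 does not require that the speed ranges cover $[-c^*_-+\epsilon,c^*_+-\epsilon]$. It requires that each bump moves on the correct side of $\gamma$ with a fixed margin $\mu$: from the right strip you need $c\le\gamma-\mu$, from the left strip $c\ge\gamma+\mu$. Under your pairing (positive $\lambda$ from the right strip, negative $\lambda$ from the left), every right-launched bump is faster than roughly $\langle c_{l,B}(0)\rangle\approx\int_{\R}\langle K(\cdot,y)\rangle y\,\d y$. Every left-launched bump is slower than this drift, which lies inside $(-c^*_-,c^*_+)$. For $\gamma$ near the drift your $\theta$ tends to $0$, so the claimed uniform bound on $\theta$ fails there. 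Incidentally, this drift need not be small or negative.

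The repair is to launch slow bumps from the right strip. A single $\lambda$ with $\langle c_{l,B}(\lambda)\rangle\in(-c^*_-,-c^*_-+\epsilon/2]$, launched from $[c_1^+s,c_2^+s]$, reaches every $\gamma\in[-c^*_-+\epsilon,c^*_+-\epsilon]$ with $\theta$ uniformly inside $(0,1)$. If such slow bumps are unavailable without truncation (left endpoint case), combine slow bumps from the right strip with fast bumps from the left strip; together they cover every $\gamma$ with a uniform margin.
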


\begin{proof}
Fix $\varepsilon>0$. Choose $c_l$ and $c_r$ such that
\[
-c^*_-<c_l<-c^*_-+\varepsilon,
\qquad
c^*_+-\varepsilon<c_r<c^*_+.
\]
Let $\theta>0$ be  small such  that $c_l+\theta<c_r-\theta$. By Lemmas  \ref{Lem-small-interval} and \ref{Lem-left1},   there exist $\delta >0$ and $T_0>0$ such that
\[
u(t,x)\ge \delta
\]
for all $t\ge T_0$ and
\[
x\in[c_lt,(c_l+\theta)t]\cup [(c_r-\theta)t,c_rt].
\]

We claim that
\[
\liminf_{t\to+\infty}\inf_{x\in[c_lt,c_rt]}u(t,x)>0.
\]
Suppose not. Define
\[
m(t):=\min_{x\in[c_lt,c_rt]}u(t,x).
\]
Then $\liminf_{t\to\infty}m(t)=0$. We choose a sequence  $t_n\to\infty$ such that
\[
m(t_n)=\min_{s\in[T_0,t_n]}m(s),\qquad m(t_n)\to0.
\]
Let $x_n\in[c_lt_n,c_rt_n]$ satisfy
\[
u(t_n,x_n)=m(t_n).
\]
Since $m(t_n)<\delta $ for large $n$, we have 
\[
x_n\in[(c_l+\theta)t_n,(c_r-\theta)t_n].
\]
Hence
\[
[x_n-\theta t_n,x_n+\theta t_n]\subset[c_lt_n,c_rt_n].
\]
Therefore
\[
u(t_n,y)\ge u(t_n,x_n),
\qquad
y\in[x_n-\theta t_n,x_n+\theta t_n].
\]

Moreover, since $x_n$ stays  away from the moving
boundaries, for all sufficiently small $h>0$ and large $n$,  we have
$x_n\in[c_l(t_n-h),c_r(t_n-h)].$
Thus
\[
u(t_n-h,x_n)\ge m(t_n-h)\ge m(t_n)=u(t_n,x_n).
\]
Consequently the left time derivative at $(t_n,x_n)$ is nonpositive, in the
sense of the lower left Dini derivative. At the differentiable point $(t_n,x_n)$,   we  obtain
\[
0\ge \partial_t u(t_n,x_n).
\]

On the other hand, by the KPP assumption,
\[
f(t,u)\ge r(t)-M u,\qquad 0\le u\le1.
\]
Using the spatial minimality of $u(t_n,x_n)$ on $[c_lt_n,c_rt_n]$, we get
\[
\begin{aligned}
\int_{\mathbb R}K(t_n,x_n-y)u(t_n,y)\d y
&\ge
\int_{x_n-\theta t_n}^{x_n+\theta t_n}
K(t_n,x_n-y)u(t_n,y)\d y        \\
&\ge
u(t_n,x_n)
\int_{-\theta t_n}^{\theta t_n}K(t_n,y)\d y .
\end{aligned}
\]
Therefore
\begin{equation}\label{contradicts}
0 \geq \partial_tu(t_n,x_n)  \geq u(t_n,x_n)
\left\{ \int_{-\theta t_n}^{\theta t_n}K(t_n,y)\d y -\overline{K}_0(t_n) + r(t_n) - Mu(t_n,x_n)
\right\}.
\end{equation}
Here $\overline{K}_0(t_n)=\int_{\mathbb R}K(t_n,y)\d y$. Since $0\in \mathcal{I}$, namely
$ \int_{\mathbb R}\|K(\cdot,y)\|_{L^\infty(0,\infty)}\d y<\infty,$
we see that 
\[
0\le
\overline{K}_0(t_n)-\int_{-\theta t_n}^{\theta t_n}K(t_n,y)\d y
\le
\int_{|y|>\theta t_n}
\|K(\cdot,y)\|_{L^\infty(0,\infty)}\d y
\to0.
\]
Since $u(t_n,x_n)=m(t_n)\to0$ and $\inf_{t\ge0}r(t)>0$, we obtain
\[
\liminf_{n\to\infty}
\left\{
\int_{-\theta t_n}^{\theta t_n}K(t_n,y)\d y
-
K_0(t_n)
+
r(t_n)
-
Mu(t_n,x_n)
\right\}
\ge
\inf_{t\ge0}r(t)>0.
\]
Since $u(t_n,x_n)>0$, this
contradicts the  inequality \eqref{contradicts}. Hence
\[
\liminf_{t\to\infty}\inf_{x\in[c_lt,c_rt]}u(t,x)>0.
\]
Due to  the choice of $c_l$ and $c_r$,   the proof is complete. 
\end{proof}

We next prove convergence to $1$ in the inner spreading region. We first need two elementary estimates for the following nonautonomous equation.

\begin{lemma}\label{Lem5.3}
Let $\tau >0$ and $a, b\in \R$ with $a<b$. Let $v_i(t,x)$, $i=1,2$, be solutions of the following equation
\begin{equation}\label{Pbss}
\begin{cases}
\partial_t v_i (t,x) = \int_{\R} K(t, x- y)  v_i(t,y) \d y - \overline{K} (t) v_i(t,x)+ v_i (t,x) f(t,v_i (t,x)), & t\geq \tau, x\in\R,  \\
v_i (\tau,  x) = v_0^i (x), & x\in\R,
\end{cases}
\end{equation}
where $v_0^i$, $i=1,2$, are continuous functions with $0 \leq v_0^i \leq 1$.  If $v_0^1(x) = v_0^2(x)$ for $x\in [a\tau , b\tau]$, then $$\big|v_1(\tau+t,z)-v_2(\tau+t,z)\big|\leq ce^{ct}e^{-\eta\theta\tau/2},\ \forall t\geq0,\ z\in [(a+\theta) \tau, (b- \theta) \tau],$$
where $\theta\in(0,({b-a})/{2})$, $\eta$ is given in Assumption \ref{ASS-K_z} and $c$ is a constant depending on $K \text{ and } f$. In particular, fixing $T>0$,
for any $\sigma>0$, there is $M_0>0$ such that if $v_0^1(x) = v_0^2(x)$ for $x\in [a\tau , b\tau]$, then
$$\big|v_1(\tau+T,z)-v_2(\tau+T,z)\big|\leq\sigma,\ \forall\tau>M_0,\ z\in [(a+\theta) \tau, (b- \theta) \tau].$$
\end{lemma}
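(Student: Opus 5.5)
The plan is to set $w:=v_1-v_2$ and to control $w$ with an exponentially weighted comparison function. That is, the kernel's exponential moments are traded against the distance from $z$ to the complement of $[a\tau,b\tau]$.

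\textbf{Step 1: linear equation for $w$.} Subtracting the two equations gives
\begin{equation*}
\partial_t w=\int_\R K(t,x-y)w(t,y)\,\d y-\overline K(t)w+\beta(t,x)w,
\end{equation*}
with $\beta:=\frac{v_1f(t,v_1)-v_2f(t,v_2)}{v_1-v_2}$, understood as $0$ where $v_1=v_2$. Since $0\le v_i\le1$ and $f$ is Lipschitz uniformly in $t$, $\beta$ is bounded: $|\beta|\le \|r\|_\infty+L_f=:C_1$. The initial data satisfy $|w(\tau,\cdot)|\le 1$ everywhere and $w(\tau,\cdot)=0$ on $[a\tau,b\tau]$.

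\textbf{Step 2: supersolution.} Fix $\mu:=\eta/2$, so that $\pm\mu\in\mathcal I$ by (K3). Consider
\begin{equation*}
W(t,x):=e^{ct}\big(e^{-\mu(x-a\tau)}+e^{\mu(x-b\tau)}\big).
\end{equation*}
For $x\in[a\tau,b\tau]$ this gives $W(0,x)\ge 0=|w(\tau,x)|$. For $x<a\tau$ the first term is at least $1$, and for $x>b\tau$ the second is, so $W(0,\cdot)\ge |w(\tau,\cdot)|$ on all of $\R$. Inserting $W$ into the operator gives
\begin{equation*}
\int_\R K(t,y)e^{\pm\mu y}\,\d y\le \int_\R\|K(\cdot,y)\|_{L^\infty}e^{\pm\mu y}\,\d y=:C_\pm<\infty .
\end{equation*}
Hence $W$ is a strict supersolution of the linear equation (with either sign of $\beta$ dominated by $C_1$) once $c\ge C_++C_-+C_1+1$. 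The functions $W\mp w$ satisfy a linear inequality whose zero-order coefficient $\beta$ is bounded. Lemma \ref{lem:MP}, applied with $g(t,x,\eta)=\beta\eta$ (for which $g/\eta$ is bounded), then gives $|w(\tau+t,x)|\le W(t,x)$.

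The one technical obstacle is that Lemma \ref{lem:MP} requires bounded $v$, while $W$ is unbounded. To handle this, I would replace $W$ by the truncation $\min\{W,\,2e^{C_1t}\}$, or more simply work with $\widetilde W:=\min\{W,2\}$ on compact time intervals via the variant of the Duhamel iteration in Lemma \ref{lem:MP}, which only needs the $R_k$ bounds. Since $|w|\le1$ anyway, the bound is informative only where $W<1$. Alternatively one can use a localized-in-$x$ weight $e^{-\mu|x-x_0|}$-type argument for each fixed $z$.

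\textbf{Step 3: evaluation.} Take $z\in[(a+\theta)\tau,(b-\theta)\tau]$. Then $z-a\tau\ge\theta\tau$ and $b\tau-z\ge\theta\tau$, so
\begin{equation*}
|w(\tau+t,z)|\le 2e^{ct}e^{-\mu\theta\tau}=2e^{ct}e^{-\eta\theta\tau/2}.
\end{equation*}
This is the claimed estimate, with the constant enlarged to $\max\{c,2\}$. For the second statement, fix $T$ and choose $M_0$ with $2e^{cT}e^{-\eta\theta M_0/2}\le\sigma$. The estimate then holds for all $\tau>M_0$, since $c$ depends only on $K$ and $f$ and not on $\tau$.
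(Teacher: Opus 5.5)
Your proof is correct, but it takes a different route from the paper's. The paper builds no barrier and uses no maximum principle. It fixes $z$, introduces the weighted norm $\|\phi\|_z=\sup_{x}e^{-\eta|x-z|/2}|\phi(x)|$, and inserts the integral form of the equation for $w=v_1-v_2$. The inequality $e^{\eta|y-z|/2-\eta|x-z|/2}\le e^{\eta|x-y|/2}$ together with $\int_\R\|K(\cdot,y)\|_{L^\infty}e^{\eta|y|/2}\d y<\infty$ then gives $\|w(t,\cdot)\|_z\le\|w(\tau,\cdot)\|_z+M\int_\tau^t\|w(s,\cdot)\|_z\d s$. Gronwall, $|w(t,z)|\le\|w(t,\cdot)\|_z$ and $\|w(\tau,\cdot)\|_z\le 2e^{-\eta\theta\tau/2}$ finish the proof.

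That argument is insensitive to the sign of $w$ and would even survive a signed kernel. The unboundedness issue never arises, because $|w|\le1$ makes $\|w\|_z$ finite automatically; the cost is one weight per point $z$. Your argument instead exploits $K\ge0$ through comparison with a single two-sided exponential barrier attached to $[a\tau,b\tau]$. This bounds $|w|$ at every $x$ simultaneously and fits the sub/supersolution style of the rest of the paper, at the price of the technical step you flag.

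Of your proposed fixes, $\min\{W,2e^{C_1t}\}$ is the right one:
\begin{itemize}
\item $2e^{C_1t}$ is a supersolution of the linear equation because $\beta\le C_1$.
\item A minimum of two supersolutions is again a supersolution for this operator, since $K\ge0$.
\item The truncation is Lipschitz in $t$, which is all the integrated Duhamel argument of Lemma~\ref{lem:MP} needs.
\end{itemize}
The variant $\min\{W,2\}$ should be dropped. The constant $2$ is not a supersolution of the linear equation when $\beta>0$, which does occur, e.g.\ where $v_1,v_2$ are both small, so that $\beta\approx r(t)>0$. If you want to use the a priori bound $|w|\le1$ directly, apply Lemma~\ref{lem:MPmoving} to $W\mp w$ on the moving interval $\{x:W(t,x)<1\}$, for as long as it is nonempty. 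Outside that interval, $W\mp w\ge0$ holds trivially.
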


\begin{proof}
For a fixed $z\in\R$, introduce the space 
\begin{equation}
X_z := \left\{   \phi \in C(\R) ;   \sup_{ x \in \R } e^{-\eta|x-z|/2 } |\phi (x) | <\infty \right\}
\end{equation} 
equipped with norm $\| \phi \|_z = \sup_{x\in \R} e^{-\eta|x-z|/2} |\phi (x)|$.

Now we denote $w(t,x)= v_1(t,x) - v_2(t,x)$. Note that $w$ satisfies 
\begin{equation}\label{eq:v1-v2}
\begin{cases}
\partial_t w(t,x) = \int_{\R} K(t, x-y) w(t,y) \d y + B(t,x)w(t,x),  & t\geq \tau,  x\in\R,\\
w(\tau, x) = v_0^1(x) - v_0^2(x),   & x\in \R.
\end{cases}
\end{equation} 
where $B(t,x):= \frac{v_1f(t,v_1)-v_2f(t,v_2)}{v_1-v_2} - \overline{K} (t)$ if $v_1\neq v_2$ and $- \overline{K} (t)$ if $v_1=v_2$. From \eqref{eq:v1-v2}, we have
$$w(t,x)=w(\tau,x)+\int_\tau^t\int_{\R} K(s, y) w(s,x-y) \d yds +\int_\tau^t B(s,x)w(s,x)ds.$$
Therefore,
\begin{equation}\label{eq:estimate on Xz}
    \begin{split}
        &\quad e^{-\eta|x-z|/2}w(t,x)\\ & \leq \|w(\tau,\cdot)\|_z+\int_\tau^t\int_{\R} K(s, x-y) e^{-\eta|x-z|/2}\big|w(s,y)\big| \d yds +\int_\tau^t \big|B(s,x)e^{-\eta|x-z|/2}w(s,x)\big|ds  \\
        &\leq \|w(\tau,\cdot)\|_z+\int_\tau^t\int_{\R} K(s, x-y) e^{\eta|y-z|/2-\eta|x-z|/2}\|w(s,\cdot)\|_z \d yds +\int_\tau^t \big|B(s,x)|\|w(s,\cdot)\|_zds \\
        &\leq \|w(\tau,\cdot)\|_z+\int_\tau^t\big(\int_\R K(s,y)e^{\eta|y|/2}dy+|B(s,x)|\big)\|w(s,\cdot)\|_z ds.
        \end{split} 
\end{equation}
By Assumption~\ref{ASS-K_z}, $\int_\R\|K(\cdot,y)\|_{L^\infty(0,\infty)}e^{\eta |y|/2}\d y<\infty$. Hence \eqref{eq:estimate on Xz} gives
$$\|w(t,\cdot)\|_z\leq\|w(\tau,\cdot)\|_z+M\int_\tau^t\|w(s,\cdot)\|_z ds, \quad \forall t>\tau,\ z\in\R,$$
where $M $ is a constant depending only on $K$ and $f$.
The Gronwall's inequality implies  that 
\begin{equation*}
\|w(t,\cdot)\|_z \leq  \| w(\tau, \cdot)\|_z  e^{M(t-\tau)} \leq  \| w(\tau, \cdot)\|_z  e^{MT} , \forall t\in[\tau,\tau+T], z\in\R. 
\end{equation*}
That means 
$$\sup_{x\in \R} \big( e^{-\eta|x-z|/2} |v_1(t, x) -v_2(t,x)| \big) \leq \sup_{x\in \R} \big( e^{-\eta|x-z|/2} | v_0^1(x) - v_0^2(x) | \big) e^{MT},\ \forall t>\tau,\ z\in\R.  $$
Particularly,  since $v_0^1(x) = v_0^2(x)$ for all  $x\in [ a \tau, b\tau]$,  we have
\begin{equation*}
\begin{split}
|v_1(\tau +T, z) -v_2(\tau +T , z)|  &\leq  e^{MT} \sup_{x\in \R} \big( e^{-\eta|x-z|/2} | v_0^1(x) - v_0^2(x) | \big) \\
&=    e^{MT} \sup_{x\in \R\setminus [a\tau, b\tau]} \big( e^{-\eta|x-z|/2} | v_0^1(x) - v_0^2(x) | \big).
\end{split}
\end{equation*}
Due to $0\leq  v_0^1, v_0^2\leq 1$, we see that for  all $z\in [ (a+\theta) \tau, (b-\theta)\tau]$
\begin{equation*}
|v_1(\tau +T, z) -v_2(\tau +T, z)|  \leq 2  e^{M T} \sup_{x\in \R\setminus [a\tau, b\tau]  }  e^{-\eta|x-z|/2}  \leq 2e^{MT}  e^{-\eta\theta \tau/2}.  
\end{equation*}
This completes the proof. 
\end{proof}

Next we state that a solution starting from  a positive constant function will converge to $1$.

\begin{lemma}\label{Lem-alpha}
For all constant  $\alpha \in (0, 1)$,  let $\tau\geq 0$  and $v$ be  a solution of  
\begin{equation}\label{Pbalpha}
\begin{cases}
\partial_t v (t,x) = \int_{\R} K(t, x- y)  v(t,y) \d y - \overline{K} (t) v(t,x)+ v (t,x) f(t,v (t,x)), & t\geq \tau, x\in\R,  \\
v (\tau,x) = \alpha, & x\in\R. 
\end{cases}
\end{equation}
Then we have
\begin{equation*}
\lim_{t\to \infty} v(t,x) =1  \quad  \text{ uniformly  in } x\in \R.
\end{equation*}
\end{lemma}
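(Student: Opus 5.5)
The plan is to reduce the problem to an ODE in time. Since the initial datum is constant in space and the kernel is translation invariant in $x$, uniqueness (Proposition \ref{Prop:exsitence}) forces $v(t,x)=w(t)$ to be independent of $x$. To see this, fix $h\in\R$ and note that $v(t,x+h)$ solves the same Cauchy problem with the same constant datum. For a spatially constant function the nonlocal terms cancel:
\[
\int_\R K(t,x-y)w(t)\,\d y-\overline K(t)w(t)=0 .
\]
Hence $w$ solves
\begin{equation*}
w'(t)=w(t)f(t,w(t)),\qquad t\ge\tau,\qquad w(\tau)=\alpha ,
\end{equation*}
in the Carathéodory sense, since $f$ is only $L^\infty$ in $t$. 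Conversely, this Carathéodory solution is itself a bounded solution of \eqref{Pbalpha}, so by uniqueness $v(t,x)=w(t)$. It therefore suffices to prove $w(t)\to1$.

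First, $0<w\le1$ for all $t\ge\tau$. The constant $1$ is a solution because $f(t,1)=0$. The function $0$ is a solution, and $w(\tau)=\alpha\in(0,1)$. The comparison principle Lemma \ref{lem:MP} (or ODE uniqueness, since $u\mapsto uf(t,u)$ is Lipschitz uniformly in $t$) then gives $0<w<1$. Consequently $f(t,w(t))\ge0$, so $w$ is nondecreasing and $w(t)\ge\alpha$.

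Next I use the uniform positivity from (f2). Because $f(t,\cdot)$ is nonincreasing, for $s\in[\alpha,1-\epsilon]$ we have
\[
f(t,s)\ge f(t,1-\epsilon)\ge h(1-\epsilon)>0
\]
for every $t$. Fix $\epsilon\in(0,1-\alpha)$. As long as $w(t)\le 1-\epsilon$, this gives $w'(t)\ge \alpha\,h(1-\epsilon)$. So $w$ must exceed $1-\epsilon$ after time at most $\tau+(1-\epsilon-\alpha)/(\alpha h(1-\epsilon))$. Monotonicity keeps it above that level afterwards, and $\epsilon$ is arbitrary, so $w(t)\to1$. Since $v(t,x)=w(t)$, the convergence is uniform in $x$. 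The bound is also independent of $\tau$, which will be useful later.

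The only delicate point is the identification $v(t,x)=w(t)$, because $K$ and $f$ are merely measurable in $t$. I would handle it by checking directly that $w$ satisfies the integral formulation used in Proposition \ref{Prop:exsitence}, whose fixed-point construction gives uniqueness among bounded solutions; the nonlocal term vanishes identically on $w$. As a fallback that avoids uniqueness, apply Lemma \ref{lem:MP} to $v-\underline w$ and $\overline w-v$, where $\underline w=\overline w=w$ are sub- and supersolutions. This yields $v=w$ directly, since the hypothesis on $g(t,x,\eta)/\eta$ holds by the Lipschitz continuity of $f$.
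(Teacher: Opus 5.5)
Your argument is correct and is essentially the paper's. You reduce to a spatially homogeneous ODE (the nonlocal term vanishes on functions constant in $x$) and combine the uniform positivity $h(u)=\inf_t f(t,u)>0$ on $[0,1)$ with comparison; the paper merely sandwiches $\underline v(t)\le v(t,x)\le 1$ with $\underline v'=\underline v\,h(\underline v)$, $\underline v(\tau)=\alpha$ (one-sided comparison suffices), whereas you identify $v$ exactly with the solution of $w'=w f(t,w)$ and bound the hitting time of $1-\epsilon$ directly.
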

\begin{proof}
From Assumption \ref{ASS-f}, we have $h(v) := \inf_{t\geq 0} f(t,v) >0$ for all $v \in [0, 1)$ and $h(1)=0$.
Let $\underline{v} $ be the solution of 
\begin{equation*}
\underline{v}' (t) = \underline{v}(t)h(\underline{v}(t)), \;\;   \forall t\geq s, \;\;  \text{ and } \; \underline{v}(\tau)=\alpha.
\end{equation*}
By comparison principle, we have
$$ \underline{v} (t) \leq v(t,x) \leq 1, \quad \forall x\in \R, t\geq \tau. $$
Note that $\lim_{t\to +\infty } \underline{v} (t)=1$. This completes the proof.
\end{proof}

The next lemma completes the proof of Theorem~\ref{Thm1_z}. Its proof follows the argument in \cite{liang2020jfa}. 
\begin{lemma}\label{lem:conv to 1}
For all  $-c^*_- < c_l < c_r < c^*_+$, we have
$$ \lim_{t\to +\infty} \inf_{x\in [c_l t , c_r t]} u(t,x)=1.$$
\end{lemma}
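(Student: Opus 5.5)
The plan is to argue by contradiction and combine Lemma~\ref{Lem-positive}, Lemma~\ref{Lem5.3} and Lemma~\ref{Lem-alpha}, following \cite{liang2020jfa}. Since $u\le 1$, it suffices to show $\liminf_{t\to\infty}\inf_{x\in[c_lt,c_rt]}u(t,x)\ge 1$. Fix $-c^*_-<c_l<c_r<c^*_+$ and choose slightly larger speeds $-c^*_-<c_l'<c_l<c_r<c_r'<c^*_+$. By Lemma~\ref{Lem-positive} there are $\alpha\in(0,1)$ and $T_0>0$ such that
\[
u(t,x)\ge\alpha,\qquad t\ge T_0,\ x\in[c_l't,c_r't].
\]

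Fix $\sigma>0$. By Lemma~\ref{Lem-alpha}, applied with the constant initial datum $\alpha$, choose $T>0$ such that the solution $v^\tau$ of \eqref{Pbalpha} satisfies $v^\tau(\tau+T,x)\ge 1-\sigma$ for all $x$. This requires the convergence time to be uniform in the starting time $\tau$. That uniformity holds, because the comparison function $\underline v$ in the proof of Lemma~\ref{Lem-alpha} solves the autonomous ODE $\underline v'=\underline v h(\underline v)$, so its convergence time to $1-\sigma$ depends only on $\alpha$ and $\sigma$.

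Next, for $\tau\ge T_0$ define the truncated initial datum
\[
w_0^\tau(x):=\max\{\alpha,\,u(\tau,x)\}\ \text{ on }[c_l'\tau,c_r'\tau],
\]
interpolated continuously in $[0,1]$ outside, for instance $w_0^\tau=\min\{u(\tau,\cdot),\alpha\}$ off the interval. Let $w$ solve \eqref{Pbss} from time $\tau$ with datum $\min\{u(\tau,\cdot),\alpha\}$. This datum equals $\alpha$ on $[c_l'\tau,c_r'\tau]$ and is at most $u(\tau,\cdot)$ everywhere. By Lemma~\ref{lem:MP}, $u(\tau+t,\cdot)\ge w(\tau+t,\cdot)$. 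Applying Lemma~\ref{Lem5.3} with $v_0^1=\min\{u(\tau,\cdot),\alpha\}$, $v_0^2\equiv\alpha$, $a=c_l'$, $b=c_r'$, and $\theta$ small, we obtain for $\tau>M_0$
\[
|w(\tau+T,z)-v^\tau(\tau+T,z)|\le\sigma,\qquad z\in[(c_l'+\theta)\tau,(c_r'-\theta)\tau].
\]
Hence $u(\tau+T,z)\ge 1-2\sigma$ on that interval.

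Finally, set $s=\tau+T$. Since $T$ is fixed and $c_l<c_l'+\theta$, $c_r'-\theta>c_r$ once $\theta$ is chosen small, the interval $[c_l s,c_r s]$ lies inside $[(c_l'+\theta)\tau,(c_r'-\theta)\tau]$ for all large $\tau$. Therefore $\inf_{[c_ls,c_rs]}u(s,\cdot)\ge 1-2\sigma$ for all large $s$, and letting $\sigma\to0$ gives the claim.

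The main obstacle is to check that the constants are uniform in $\tau$. Lemma~\ref{Lem5.3} as stated gives a constant depending only on $K$, $f$ and $T$, which is what is needed. Lemma~\ref{Lem-alpha} needs the uniformity in the starting time noted above, which the ODE comparison supplies. The time-dependence of the coefficients does not otherwise interfere, since all the comparison arguments (Lemma~\ref{lem:MP}) hold on any time interval $[\tau,\infty)$.
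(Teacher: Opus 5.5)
Your proof is correct and takes essentially the paper's route: truncate $u(\tau,\cdot)$ at a positive level $\alpha$ supplied by Lemma~\ref{Lem-positive}, compare with the constant-data solution of Lemma~\ref{Lem-alpha} via the localization estimate of Lemma~\ref{Lem5.3}, and shift time by the fixed $T$. Widening the speeds to $c_l',c_r'$ is equivalent to the paper's shrinking to $[(c_l+2\theta)t,(c_r-2\theta)t]$, and your remark that $T$ is uniform in $\tau$ (the comparison ODE is autonomous) makes explicit a point the paper leaves implicit. Two slips: the final step needs $c_l'+\theta<c_l$, not $c_l<c_l'+\theta$, and the definition of $w_0^\tau$ via $\max\{\alpha,u(\tau,x)\}$ should be deleted, since you actually use $\min\{u(\tau,\cdot),\alpha\}$.
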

\begin{proof}
Since $0\leq u\leq1$, it is enough to prove that for any small $\theta>0$, for any $\varepsilon>0$, there exists $T_0>0$ large enough such that 
$$ u(t,x) \geq 1-\varepsilon, \forall  t>T_0,  \; x\in [ (c_l +2\theta )t , (c_r -2\theta)t]. $$
Set $\alpha := \frac{1}{2} \liminf_{t\to +\infty} \inf_{x\in [c_l t , c_r t]} u(t,x) \in (0,1)$. 
Let $w:= w(t,x; \tau, \alpha)$ be a solution of  \eqref{Pbalpha}. The notation means that the initial time is $\tau$. Lemma~\ref{Lem-alpha} gives 
\begin{equation*}
\lim_{ s \to +\infty} w(s+ \tau,x; \tau, \alpha)=1, \text{ uniformly in } x\in \R.
\end{equation*}
Hence, for each $\sigma>0$, there is $T>0$ such that 
$$ w(s+\tau,  x; \tau, \alpha) \geq 1- \sigma, \forall s\geq T.$$
We define 
\begin{equation*}
u_0^\tau (\cdot) := \min \{ \alpha, u(\tau, \cdot; 0, u_0) \}.
\end{equation*}
Here $u(\tau,\cdot;0,u_0)$ denotes the solution of \eqref{Pb} at time $\tau$.   
Let $\underline{u}^\tau  ( t, \cdot; \tau, u_0^\tau(\cdot))$ be the solution of \eqref{Pbss} equipped with initial data $ \underline{u}^\tau (\tau,\cdot) =u_0^\tau(\cdot)$. 
From the definition of $\alpha$, note that $u^\tau_0(x) =\alpha$  for all $x\in [(c_l+\theta) \tau,  (c_r- \theta)\tau ]$ and  $\tau>M$ large enough. 
Applying Lemma~\ref{Lem5.3} to $\underline{u}^\tau$ and $w$, and increasing $M$ if necessary, we have for all $\tau>M$
\begin{equation}
|w(\tau + T, x; \tau , \alpha) - \underline{u}^\tau (\tau + T, x; \tau, u_0^\tau (x) ) | \leq \sigma,  \quad \forall x\in [ (c_l+ \theta) \tau,  (c_r- \theta) \tau].
\end{equation}
 Hence,   $\underline{u}^\tau (\tau + T, x; \tau, u_0^\tau (\cdot) )  \geq w(\tau + T, x; \tau, \alpha) - \sigma \geq 1-2\sigma$  for all $x\in [ (c_l+ \theta) \tau,  (c_r- \theta) \tau]$.    Increasing $M$ once more, we may assume that for all $t\geq M+T$,
 \begin{equation*}
 [(c_l +2\theta) t , (c_r-2\theta) t] \subset [(c_l +\theta) (t- T) , (c_r-\theta)(t-T)].
 \end{equation*}  
 Set $\tau = t-T$.  For all $t\geq T+M$ large enough, we have
 $$\underline{u}^{t- T} (t, x; t- T, u_0^{t-T} (\cdot)) \geq 1-2\sigma, \quad  \forall x\in  [(c_l +2\theta) t , (c_r-2\theta)t]. $$
 Note that $u(t-T, x; 0,u_0) \geq u_0^{t-T}(x)$ for all $x\in \R$. The comparison principle ensures that 
 $$u(t,x; 0, u_0)= u(t, x; t-T, u(t-T, x; 0,u_0))  \geq \underline{u}^{t-T} (t,x; t-T, u_0^{t-T} ).$$
 Therefore, we obtain 
 $$u(t,x; 0, u_0)\geq 1-2\sigma,  \quad  \forall t\geq T+M,  x\in  [(c_l +2\theta) t , (c_r-2\theta)t]. $$
 This completes the proof.
\end{proof}

\section{Special cases}

We finish with a few simple cases covered by the assumptions above.

\begin{example}[Separated time dependence]
Let $K(t,y)=d(t)J(y)$, where $d\in L^\infty(0,\infty)$ is nonnegative and has a uniform mean value, and where $J\geq0$ satisfies $\int_\R J(y)e^{\lambda y}\d y<\infty$ for $\lambda$ in a neighborhood of the origin. Then
\begin{equation*}
 H(\lambda)=\langle d\rangle\int_\R J(y)(e^{\lambda y}-1)\d y+\langle r\rangle,
\end{equation*}
and the rightward and leftward speeds are obtained from \eqref{c*}. If $J$ is not symmetric, the two speeds are generally different.
\end{example}

\begin{example}[One-sided kernels]
If $\langle K(\cdot,y)\rangle=0$ for a.e. $y>0$, then the averaged kernel allows no positive jumps. In this case $c^*_+=0$, whereas the leftward speed may be positive. The opposite conclusion holds when the averaged kernel is supported in $[0,\infty)$.
\end{example}

\begin{small}

\end{small}

\end{document}